\theoremstyle{plain}
\newtheorem{theorem}{Theorem}
\newtheorem{lemma}[theorem]{Lemma}
\newtheorem{proposition}[theorem]{Proposition}
\newtheorem{corollary}[theorem]{Corollary}
\numberwithin{theorem}{section}
\numberwithin{equation}{theorem}
\theoremstyle{definition}
\newtheorem{definition}[theorem]{Definition}
\newtheorem{example}[theorem]{Example}
\newtheorem{remark}[theorem]{Remark}
\newtheorem{question}[theorem]{Question}
\newtheorem*{question*}{Question}
\newcommand{\fm}{\mathfrak{m}}
\DeclareMathOperator{\End}{End}
\DeclareMathOperator{\Hom}{Hom}
\DeclareMathOperator{\Aut}{Aut}
\DeclareMathOperator{\gr}{gr}
\DeclareMathOperator{\tr}{tr}
\DeclareMathOperator{\GKdim}{GKdim}
\DeclareMathOperator{\MaxSpec}{MaxSpec}
\DeclareMathOperator{\Spec}{Spec}
\DeclareMathOperator{\LND}{LND}
\begin{document}

\title{Morita cancellation problem}

\author{D.-M. Lu, Q.-S. Wu and J.J. Zhang}

\address{Lu:
School of Mathematical Sciences, 
Zhejiang University, Hangzhou 310027, China}

\email{dmlu@zju.edu.cn}

\address{Wu: 
School of Mathematical Sciences, 
Fudan University, Shanghai 200433, China}

\email{qswu@fudan.edu.cn}

\address{Zhang: Department of Mathematics, Box 354350,
University of Washington, Seattle, Washington 98195, USA}

\email{zhang@math.washington.edu}

\begin{abstract}
We study a Morita-equivalent version of the Zariski cancellation problem.
\end{abstract}

\subjclass[2000]{Primary 16P99, 16W99}


\keywords{Zariski cancellation problem, Morita cancellation problem}


\maketitle


\section*{Introduction}
\label{xxsec0}

An algebra $A$ is called {\it cancellative} if 
any algebra isomorphism $A[t] \cong B[t]$ of polynomial algebras 
for some algebra $B$ implies that $A$ is isomorphic to $B$.
The famous Zariski Cancellation Problem (abbreviated as ZCP) asks if 

\medskip

{\it the commutative polynomial ring $\Bbbk [x_1,\dots,x_n]$ over a field 
$\Bbbk$ is cancellative}

\medskip

\noindent 
for $n\geq 1$ \cite{Kr, BZ1, Gu3}. There is a 
long history of studying the cancellation property of affine commutative 
domains. For example, $\Bbbk[x_1]$ is cancellative by a result of 
Abhyankar-Eakin-Heinzer in 1972 \cite{AEH}, while $\Bbbk[x_1,x_2]$ is 
cancellative by a result of Fujita in 1979 \cite{Fu} and Miyanishi-Sugie 
in 1980 \cite{MS} in characteristic zero and by a result of Russell in 
1981 \cite{Ru} in positive characteristic. The ZCP for $n\geq 3$ has 
been open for many years. One remarkable achievement in this research 
area is a result of Gupta in 2014 \cite{Gu1, Gu2} which settled the 
ZCP negatively in positive characteristic for $n\geq 3$. The ZCP in 
characteristic zero remains open for $n\geq 3$.

The ZCP (especially in dimension two) is closely related to the 
Automorphism Problem, the Characterization Problem, the Linearization 
Problem, the Embedding Problem, and the Jacobian Conjecture, see 
\cite{Kr, EH, Gu3, BZ1} for history, partial results and references 
concerning the cancellation problem.

The ZCP for noncommutative algebras was introduced 
in \cite{BZ1} and further investigated in \cite{LWZ}. During the last 
few years, several researchers have been making significant contributions 
to the cancellation problem in the noncommutative setting and related topics, 
see papers \cite{BZ1, BZ2, BY, CPWZ1, CPWZ2, CYZ1, CYZ2, Ga, GKM, GWY, LY, 
LWZ, LMZ, NTY, Ta1, Ta2, WZ} and so on.

The first goal of this paper is to introduce a new cancellation property 
for noncommutative algebras. Let $\Bbbk$ be a base field and everything 
is over $\Bbbk$. For any algebra $A$, let $M(A)$ denote the category of 
right $A$-modules. 

\begin{definition}
\label{xxdef0.1} An algebra $A$ is called {\it Morita cancellative}
if the statement that 
$${\text{$M(A[t])$ is equivalent to $M(B[t])$ for another algebra $B$}}$$
implies that
$${\text{$M(A)$ is equivalent to $M(B)$.}}$$
\end{definition}

This Morita version of the cancellation property is one of the natural 
generalizations of the original Zariski cancellation property when we study 
noncommutative algebras. Another generalization involves the derived category 
of modules. Let $D(A)$ denote the derived category 
of right $A$-modules for an algebra $A$. 

\begin{definition}
\label{xxdef0.2} An algebra $A$ is called {\it derived cancellative}
if the statement that 
$${\text{$D(A[t])$ is triangulated equivalent to $D(B[t])$  for an algebra $B$}}$$
implies that
$${\text{$D(A)$ is triangulated equivalent to $D(B)$.}}$$
\end{definition}

We will show that [Theorem \ref{xxthm0.7}] if $Z$ is a commutative domain, 
then 
$${\text{$Z$ is Morita cancellative if and only if $Z$ is cancellative}}$$
and
$${\text{$Z$ is derived cancellative if and only if  $Z$ is cancellative.}}$$ 
In general, when $A$ is noncommutative, it is not clear to us what are the 
relationships between these three different versions of cancellation property. 
Lemma \ref{xxlem1.4} (together with Example \ref{xxex1.5}) provides 
noncommutative algebras 
that are neither cancellative, nor Morita cancellative, nor derived 
cancellative. We will introduce some general methods to handle the Morita 
cancellation problems for noncommutative algebras. 

The second aim of the paper is to show several classes of algebras 
are Morita (or derived) cancellative. First we generalize a result 
of \cite[Theorem 0.2]{LWZ}.

\begin{theorem}
\label{xxthm0.3}
Suppose $A$ is strongly Hopfian {\rm{(}}Definition \ref{xxdef3.2}{\rm{)}}
and the center of $A$ is artinian. Then $A$ is Morita cancellative.
\end{theorem}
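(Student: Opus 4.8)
The plan is to descend the given Morita equivalence from the polynomial extensions down to $A$ and $B$, by reducing it to an honest isomorphism to which the strongly Hopfian hypothesis can be applied.

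First I would rewrite the hypothesis in idempotent form. By Morita theory, the equivalence $M(A[t])\simeq M(B[t])$ means $B[t]\cong e\,M_n(A[t])\,e$ for some full idempotent $e$ in $M_n(A[t])=M_n(A)[t]$; fix such an isomorphism $\Phi$. Evaluating at $t=0$, the constant term $\bar e:=e|_{t=0}$ is again a full idempotent in $M_n(A)$, so $C:=\bar e\,M_n(A)\,\bar e$ is Morita equivalent to $A$. The target $M(A)\simeq M(B)$ will therefore follow once I show $B\cong C$. To connect the two I track the polynomial variable through the equivalence: since the center is a Morita invariant and $Z(A[t])=Z(A)[t]$, $Z(B[t])=Z(B)[t]$, the equivalence induces a $\Bbbk$-algebra isomorphism $g\colon Z(A)[t]\xrightarrow{\ \sim\ }Z(B)[t]$, explicitly $g(z)=\Phi^{-1}(ze)$. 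Set $\tau:=g(t)\in Z(B)[t]$. Because the central element $te$ generates the kernel of the reduction $e\,M_n(A)[t]\,e\to C$, the isomorphism $\Phi$ descends to $B[t]/(\tau)\cong C$. Comparing Krull dimensions in $Z(A)[t]\cong Z(B)[t]$ shows $Z(B)$ is again artinian, so the structure theory of $0$-dimensional rings is available on both sides.

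It remains to straighten $\tau$, that is, to prove that the $B$-algebra endomorphism $\rho\colon B[t]\to B[t]$ determined by $t\mapsto\tau$ is an isomorphism; granting this, $B=B[t]/(t)\cong B[t]/(\tau)\cong C$ and we are done. I would first check that $\rho$ is surjective. Reducing modulo the nilradical of $Z(B)$ turns $g$ into an isomorphism of polynomial rings over the reduced artinian quotient $Z(B)/\mathrm{nil}$, a finite product of fields; there the image $\bar\tau$ of the variable is forced, on each factor, to be a genuine linear coordinate (the subfield is characterized as the elements algebraic over $\Bbbk$). Hence the coefficient of $t$ in $\tau$ is a unit modulo the nilradical, and since in an artinian ring the nilradical equals the Jacobson radical, it is a unit; the higher coefficients are nilpotent. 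A finite iteration, which terminates because nilpotents have bounded index, then expresses $t$ through $B$ and $\tau$, so $\rho$ is onto. The strongly Hopfian hypothesis (Definition \ref{xxdef3.2}), engineered to persist under passage to $B[t]$ and through the Morita context, then upgrades the surjection $\rho$ to an isomorphism.

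The main obstacle is precisely this last step: recognizing that the image $\tau=g(t)$ of the polynomial variable under the center isomorphism is again a coordinate of $Z(B)[t]$. This is the commutative cancellation phenomenon at the level of the center, and it is exactly where the artinian hypothesis is indispensable, since it forces the center to be $0$-dimensional so that the leading coefficient of $\tau$ above the nilradical is a unit and the higher coefficients are nilpotent. The role of strong Hopfianness is complementary: rather than inverting $\rho$ by an explicit and delicate substitution, one observes that $\rho$ is a surjective endomorphism and concludes injectivity formally. Verifying that strong Hopfianness genuinely transfers from $A$ to the relevant endomorphism of $B[t]$, across both the Morita context and the polynomial extension, is the technical heart that I expect to require the most care.
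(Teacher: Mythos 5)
Your reduction is set up in the wrong direction, and that is exactly where the proof breaks. Everything up to the surjectivity of $\rho$ is essentially sound: the corner-ring presentation $B[t]\cong e\,M_n(A[t])\,e$, the specialization $C=\bar e\,M_n(A)\,\bar e$, the identification $B[t]/(\tau)\cong C$, and the deduction that $Z(B)$ is artinian. (One repair is needed even there: your claim that on each field factor the subfield is ``characterized as the elements algebraic over $\Bbbk$'' is false in general, since the residue fields of an artinian $\Bbbk$-algebra need not be algebraic over $\Bbbk$ --- for instance $Z(A)$ could be the field $\Bbbk(x)$. The correct argument is that the units of $K[t]$, for $K$ a field, are exactly $K^{\times}$, so any isomorphism $K[t]\to L[t]$ carries $K$ onto $L$ and must send $t$ to a degree-one polynomial with invertible leading coefficient; this is the mechanism behind \cite[Theorem 4.1]{LWZ}.) The fatal step is the last one: to conclude that the surjection $\rho\colon B[t]\to B[t]$ is injective you need $B[t]$ to be Hopfian, but the hypothesis gives Hopfianness only of $A[t_1,\dots,t_n]$. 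Hopficity of an algebra is defined through ring endomorphisms; it is not a categorical property of the module category, it is not known to pass across Morita equivalences (or to corner rings $eM_n(R)e$) for noncommutative rings, and nothing in your proposal proves such a transfer --- the assertion that strong Hopfianness is ``engineered to persist'' through the Morita context is unfounded: Definition \ref{xxdef3.2} concerns $A$ alone and says nothing about the unknown algebra $B$.

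The gap disappears if you run your own argument in the mirror direction, which is what the paper does (Lemmas \ref{xxlem3.4}, \ref{xxlem3.6}, \ref{xxlem3.7}, Theorem \ref{xxthm4.2}, Corollary \ref{xxcor4.3}). Pull the $B$-side variable back instead of pushing the $A$-side variable forward: set $f:=g^{-1}(s)\in Z(A)[t]$ (in the paper's notation, $f=\omega^{-1}(s)$). The artinian/units argument --- applied now to $Z(A)$, which is artinian by hypothesis, rather than to $Z(B)$ --- shows $t\in A\{f\}$, i.e.\ the endomorphism $A[t]\to A[t]$, $t\mapsto f$, is surjective; this is the m-detectability of $A$ (Lemma \ref{xxlem3.7}, resting on \cite[Theorem 4.1]{LWZ}). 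Now the Hopfian hypothesis applies verbatim, because the surjection is an endomorphism of $A[t]$ itself: it is bijective, hence $f$ is a genuine central indeterminate and $A[t]=A[f]$. Finally, instead of your specialization at $t=0$, one quotients both sides of the Morita equivalence by the corresponding central ideals $(f)$ and $(s)$ via Lemma \ref{xxlem2.1}(5), obtaining that $A\cong A[t]/(f)A$ is Morita equivalent to $B\cong B[s]/(s)B$. In short, both the detectability burden and the Hopfian burden must be placed on the $A$-side, where the hypotheses live; your version correctly rescues detectability on the $B$-side (because $Z(B)$ inherits artinianness through the center isomorphism), but Hopfianness cannot be rescued there.
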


Note that left (or right) noetherian algebras and locally finite
${\mathbb N}$-graded algebras are strongly Hopfian [Example \ref{xxex3.5}].
So Theorem \ref{xxthm0.3} covers a large class of algebras. The following
are consequences of the above theorem, see also 
\cite[Corollary 0.3 and Theorem 0.4]{LWZ} for comparison.

\begin{theorem}
\label{xxthm0.4} Let $A$ be a left {\rm{(}}or right{\rm{)}}
noetherian algebra such that its center 
is artinian. Then $A$ is Morita cancellative. As a consequence, 
every finite dimensional algebra over a base field $\Bbbk$ is
Morita cancellative.
\end{theorem}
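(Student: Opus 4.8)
The plan is to deduce Theorem \ref{xxthm0.4} directly from Theorem \ref{xxthm0.3}. That theorem asks for two things about $A$: that $A$ be strongly Hopfian (Definition \ref{xxdef3.2}) and that the center of $A$ be artinian. The second condition is precisely our standing hypothesis, so the whole task reduces to checking the first, namely that a left (or right) noetherian algebra is strongly Hopfian. Once this is in hand, Theorem \ref{xxthm0.3} applies verbatim and yields that $A$ is Morita cancellative.

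To verify the strong Hopfian property, I would appeal to Example \ref{xxex3.5}, which records exactly that left (or right) noetherian algebras are strongly Hopfian. The reason I expect to lie behind this is the standard one: the defining condition for strong Hopficity should amount to the stabilization of the ascending chain of kernels $\ker\phi \subseteq \ker\phi^2 \subseteq \cdots$ associated to every algebra endomorphism $\phi$ of $A$, and such stabilization is automatic as soon as $A$ satisfies the ascending chain condition on one-sided ideals. Granting this, both hypotheses of Theorem \ref{xxthm0.3} are met, and the first assertion of the theorem follows.

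For the consequence, let $A$ be a finite dimensional algebra over $\Bbbk$. I would first observe that $A$, being finite dimensional, is both left and right noetherian (indeed artinian), so the noetherian hypothesis of the first part is satisfied. Next, the center $Z(A)$ is a subalgebra of $A$ and is therefore itself a finite dimensional commutative $\Bbbk$-algebra; any finite dimensional commutative algebra is artinian, so the center is artinian as required. Both hypotheses of the first assertion are thus in force, and $A$ is Morita cancellative. The only step demanding genuine care is the passage from the noetherian condition to strong Hopficity, and this is supplied by Example \ref{xxex3.5} rather than re-proved here; beyond correctly matching the hypotheses of Theorem \ref{xxthm0.3}, I expect no real obstacle.
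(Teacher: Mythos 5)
Your proposal is correct and takes essentially the same approach as the paper: the paper's Section 4 proof (Theorem \ref{xxthm4.1}, of which Theorem \ref{xxthm0.4} is a special case) runs on exactly the ingredients you assemble --- noetherian implies strongly Hopfian via Example \ref{xxex3.5}(1), combined with the artinian-center machinery behind Theorem \ref{xxthm0.3} --- and the paper's introduction itself presents Theorem \ref{xxthm0.4} as a consequence of Theorem \ref{xxthm0.3} in precisely the way you argue. Your verification of the finite-dimensional case (such an algebra is noetherian and its center, being a finite dimensional commutative algebra, is artinian) correctly fills in what the paper leaves implicit.
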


For non-noetherian algebras we have the following.

\begin{theorem}
\label{xxthm0.5}
For every finite quiver $Q$, the path algebra $\Bbbk Q$ is
Morita cancellative.
\end{theorem}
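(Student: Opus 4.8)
The plan is to combine the sufficient condition of Theorem \ref{xxthm0.3} with a direct analysis of the handful of quivers it misses. First I would record that $\Bbbk Q$ is always strongly Hopfian: since $Q$ is finite, $\Bbbk Q$ is a locally finite $\mathbb{N}$-graded algebra under the path-length grading (each graded piece is spanned by the finitely many paths of a given length), so it is strongly Hopfian by Example \ref{xxex3.5}. Thus the only hypothesis of Theorem \ref{xxthm0.3} that can fail for $\Bbbk Q$ is the artinianness of the center.

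Next I would reduce to the case that $Q$ is connected. Writing $Q=\bigsqcup_i Q_i$ for the connected components gives $\Bbbk Q=\prod_i \Bbbk Q_i$ and hence $\Bbbk Q[t]=\prod_i \Bbbk Q_i[t]$, which is precisely the decomposition of $\Bbbk Q[t]$ into indecomposable (block) factors, since $\Bbbk[t]$ is a domain and does not split off any idempotent. Because the decomposition of a module category into indecomposable summands is intrinsic, any equivalence $M(\Bbbk Q[t])\simeq M(B[t])$ must match these blocks; an idempotent argument, using that non-isomorphic connected path algebras are pairwise non-Morita-equivalent, then reduces the problem to proving each connected $\Bbbk Q_i$ is Morita cancellative. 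Some care is needed here to see that the central variable $t$ is carried to a genuine polynomial variable on each block, which is where cancellation of the individual factors is used.

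For a connected quiver $Q$ I would invoke the classical computation of $Z(\Bbbk Q)$: it equals $\Bbbk$ unless $Q$ is a single oriented cycle $C_n$ (the loop when $n=1$), in which case $Z(\Bbbk Q)=\Bbbk[z]$ with $z$ the sum of the primitive cycles based at the vertices. Hence if $Q$ is connected and not an oriented cycle --- in particular whenever $Q$ is acyclic, so that $\Bbbk Q$ is finite-dimensional and even Theorem \ref{xxthm0.4} applies --- the center is artinian and Theorem \ref{xxthm0.3} gives the conclusion verbatim.

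This leaves the oriented cycles $C_n$ as the main obstacle, precisely because $Z(\Bbbk C_n)=\Bbbk[z]\cong\Bbbk[x]$ is not artinian. For $n=1$ we have the commutative ring $\Bbbk[x]$, which is cancellative by Abhyankar--Eakin--Heinzer and therefore Morita cancellative by Theorem \ref{xxthm0.7}. For $n\ge 2$ I would exploit that $\Bbbk C_n$ is a $\Bbbk[z]$-order that is module-finite over its center, each space $e_j\Bbbk C_n e_i$ being free of rank one over $\Bbbk[z]$. Given $M(\Bbbk C_n[t])\simeq M(B[t])$, the equivalence induces $Z(B)[t]=Z(B[t])\cong Z(\Bbbk C_n[t])=\Bbbk[z,t]$, and cancellation of $\Bbbk[x]$ forces $Z(B)\cong\Bbbk[z]$; moreover $B$ is hereditary, since global dimension is Morita-invariant and the global dimension of $\Bbbk C_n[t]$ is $2$. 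The hard part will be upgrading these coincidences of invariants into an honest equivalence $M(B)\simeq M(\Bbbk C_n)$. I would recover the integer $n$ as the number of isomorphism classes of simple modules on which the distinguished central element acts as zero (the vertex simples, as opposed to the one-parameter family of $n$-dimensional simples on which $z$ acts invertibly), and then use the order structure over $\Bbbk[z]$ together with strong Hopfianness to pin down $B$ up to Morita equivalence. Making this last recovery rigorous --- rather than merely matching the center, the global dimension, and $n$ --- is the crux of the argument.
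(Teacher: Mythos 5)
Your reduction to connected quivers, your treatment of connected quivers other than oriented cycles (center $\Bbbk$, so Theorem \ref{xxthm0.3} applies), and your treatment of $C_1$ (cancellativity of $\Bbbk[x]$ by Abhyankar--Eakin--Heinzer plus Theorem \ref{xxthm0.7}) are all correct, and they track the paper's own proof (Lemma \ref{xxlem6.3}, the center computation \eqref{E6.0.1}, Theorem \ref{xxthm4.2}, and Proposition \ref{xxpro5.7}(2)) quite closely. But the case you yourself identify as the crux --- the oriented cycles $C_n$ with $n\ge 2$ --- is not proved in your proposal; it is only a list of Morita invariants of $B$ ($Z(B)\cong\Bbbk[z]$, $B$ hereditary, a count of ``vertex simples'') together with the hope that these ``pin down $B$ up to Morita equivalence.'' That hope is the entire difficulty, and nothing in your sketch delivers it. Matching invariants does not produce an equivalence $M(B)\simeq M(\Bbbk C_n)$ unless you supply a genuine classification theorem (e.g., a Morita classification of hereditary $\Bbbk[z]$-orders by their ramification data, after first proving $B$ is an affine prime PI algebra module-finite over $\Bbbk[z]$ --- also not established in the sketch); none of this is carried out. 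There is also a second, more structural issue: the abstract isomorphism $Z(B)\cong\Bbbk[z]$ obtained from cancellativity of $\Bbbk[x]$ says nothing about how the center isomorphism $\omega:\Bbbk[z,t]\to Z(B)[t]$ of \eqref{E2.1.1} sits relative to the subalgebras $\Bbbk[z]$ and $Z(B)$; the variable $z$ could a priori be scrambled with $t$, and your argument never rules this out.

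The paper's route through this case is exactly designed to rule that out, and it avoids classifying $B$ altogether. Since $\Bbbk C_n$ ($n\ge2$) is prime of GK-dimension one and \emph{not} Azumaya, the property ${\mathcal S}$ of being simple has discriminant ideal a nonzero proper principal ideal of $Z=\Bbbk[w]$ (Lemma \ref{xxlem5.6}, Proposition \ref{xxpro5.7}(1)); since ${\mathcal S}$ is a stable Morita invariant (Lemma \ref{xxlem5.1}) and every non-unit of $\Bbbk[w]$ is effective (Example \ref{xxex2.9}, Theorem \ref{xxthm2.10}), Corollary \ref{xxcor2.11} shows $\omega$ restricts to an isomorphism $Z(A)\to Z(B)$ (strong m-$Z$-retractability). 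Then Lemma \ref{xxlem3.4} gives m-detectability, and Lemma \ref{xxlem3.6}(2) descends the Morita equivalence formally, by passing to quotients modulo the ideals generated by $\omega^{-1}(s_i)$ and $s_i$ and invoking Lemma \ref{xxlem2.1}(5). Finally, Section \ref{xxsec5} assumes $\Bbbk$ algebraically closed, so the paper inserts a base-change step (Lemma \ref{xxlem6.1}) to handle arbitrary $\Bbbk$; your sketch, which speaks of a one-parameter family of $n$-dimensional simples, implicitly needs $\Bbbk$ algebraically closed as well but never addresses the descent. To repair your proposal, the cleanest fix is to replace your invariant-matching plan for $C_n$ by an appeal to Theorem \ref{xxthm0.6}/Theorem \ref{xxthm5.8}(2) together with Lemma \ref{xxlem6.1}.
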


Recall from \cite[Theorem 0.5]{BZ1} that, if $A$ is an affine domain of 
GK-dimension two over an algebraically closed field of characteristic
zero and if $A$ is not commutative, then $A$ is cancellative. It is well-known
that, in contrast, noncommutative affine prime (non-domain) algebras of 
GK-dimension two need not be cancellative \cite[Example 1.3(5)]{LWZ}
and that commutative affine domains of GK-dimension two need not be 
cancellative by examples of Hochster \cite{Ho} and Danielewski \cite{Da},
see Example \ref{xxex1.5}(1,2). For GK-dimension one, a classical result 
of Abhyankar-Eakin-Heinzer \cite[Theorem 3.3]{AEH} says that every 
affine commutative domain of GK-dimension one is cancellative. Recently, 
it was proved that every affine prime $\Bbbk$-algebra of GK-dimension 
one is cancellative. Next we are adding another result in low GK-dimension.

\begin{theorem}
\label{xxthm0.6}
Let $\Bbbk$ be algebraically closed. Then every affine prime 
$\Bbbk$-algebra of GK-dimension one is Morita 
cancellative.
\end{theorem}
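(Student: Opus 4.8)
The plan is to reduce the statement to a problem about the one-dimensional affine center and then to transport the Morita equivalence through the polynomial variable by a base-change of a progenerator.

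First I would invoke the Small--Warfield theorem: a prime affine $\Bbbk$-algebra $A$ of GK-dimension one is a finite module over its center $Z:=Z(A)$, and $Z$ is an affine commutative domain of Krull (equivalently GK-) dimension one; in particular $A$ is noetherian and satisfies a polynomial identity. Consequently $A[t]$ is module-finite over its center $Z(A[t])=Z[t]$. Starting now from a Morita equivalence $M(A[t])\simeq M(B[t])$, I would bootstrap these hypotheses onto $B$. Since the center is a Morita invariant, there is a ring isomorphism $Z(A[t])\cong Z(B[t])$, i.e. $Z[t]\cong Z(B)[t]$; the cancellation theorem of Abhyankar--Eakin--Heinzer for one-dimensional affine domains \cite{AEH} then gives $Z\cong Z(B)$, so $Z(B)$ is again an affine domain of dimension one. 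Being module-finite over the center is preserved under Morita equivalence of algebras finite over a noetherian center, so $B[t]$, and hence (reducing modulo $t$) $B$ itself, is module-finite over its center, prime, and affine of GK-dimension one. Thus $A$ and $B$ play symmetric roles.

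The core of the argument is the base change. Write $B[t]\cong\End_{A[t]}(P)$ for a progenerator $P$ over $A[t]$; the center identification is canonical, so I regard $\mathcal Z:=Z(A[t])=Z(B[t])$ as a single commutative ring carrying two polynomial presentations $\mathcal Z=Z(A)[t_A]=Z(B)[t_B]$. Because $P$ is finitely generated projective over $A[t]$, the formation of endomorphism rings commutes with the central quotient $\mathcal Z\to\mathcal Z/(t_A)\cong Z(A)$: the module $P/t_AP$ is a progenerator over $A[t]/(t_A)=A$ and
$$ \End_A(P/t_AP)\;\cong\;\End_{A[t]}(P)\otimes_{\mathcal Z}\mathcal Z/(t_A)\;\cong\;B[t]\big/t_A\,B[t]. $$
Hence $A$ is Morita equivalent to $B[t]/t_AB[t]=B\otimes_{Z(B)}\bigl(\mathcal Z/(t_A)\bigr)$, and symmetrically $B$ is Morita equivalent to $A\otimes_{Z(A)}\bigl(\mathcal Z/(t_B)\bigr)$. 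It then remains to show that these central specializations do not alter the Morita class, so that $A$ and $B$ are Morita equivalent.

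The main obstacle is precisely reconciling the two polynomial coordinatizations of the common center $\mathcal Z$: the variable $t_A$ need not be a $Z(B)$-coordinate for the presentation $\mathcal Z=Z(B)[t_B]$, so the composite $Z(B)\hookrightarrow\mathcal Z\twoheadrightarrow\mathcal Z/(t_A)$ need not be an isomorphism, and a priori the weird ``crossed'' specializations must be ruled out. To control this I would use two inputs. First, Tsen's theorem: the function field $K=\Frac(Z)$ has transcendence degree one over the algebraically closed field $\Bbbk$, so $\operatorname{Br}(K)=0$ and the generic fibre $A\otimes_Z K$ is a split matrix algebra $M_n(K)$; this pins down the generic Morita type and confines the ramification of the order $A$ to finitely many maximal ideals of $Z$, where the central fibres are controlled. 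Second, the rigidity of one-dimensional affine domains under the extra variable: such a $Z$ either is a polynomial ring $\Bbbk[x]$ or admits no nonzero locally nilpotent derivation, so a Makar--Limanov-type invariant recovers $Z$ canonically inside $\mathcal Z=Z[t]$ and aligns the coordinate $t_A$ with $t_B$ up to an automorphism of $\mathcal Z$ that respects the order structure. Combining these, the specialization $Z(B)\to\mathcal Z/(t_A)$ becomes an isomorphism, whence $B\otimes_{Z(B)}(\mathcal Z/(t_A))\cong B$ and $A$ is Morita equivalent to $B$. The residual case $Z\cong\Bbbk[x]$, in which nonzero locally nilpotent derivations do exist, would be handled separately: there I would use the classical cancellation of $\Bbbk[x]$ together with $\operatorname{Br}(K)=0$ to identify the Morita class of the order with that of its center, so that $A$ and $B$ are each Morita equivalent to $Z\cong Z(B)$, and therefore to each other.
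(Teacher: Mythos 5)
Your reduction via Small--Warfield, the identification of centers through $\omega$, the Abhyankar--Eakin--Heinzer cancellation $Z(B)\cong Z$, and the central specialization $\End_A(P/t_AP)\cong B[t]/\omega(t_A)B[t]$ (this is Lemma~\ref{xxlem2.1}(5) of the paper) are all sound. In particular, once one knows $\omega(Z(A))=Z(B)$ --- strong m-$Z$-retractability in the paper's language --- your base-change argument gives $A$ Morita equivalent to $B$ directly, even bypassing the paper's detour through detectability and the Hopfian property; and for $Z\not\cong\Bbbk[x]$ the Makar--Limanov rigidity of $Z$ inside $Z[t_1,\dots,t_n]$ supplies exactly this identification, just as in Proposition~\ref{xxpro5.4}(1). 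The Azumaya case over $\Bbbk[x]$ is also fine and matches Proposition~\ref{xxpro5.7}(2).

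The genuine gap is the remaining case: $Z\cong\Bbbk[x]$ with $A$ \emph{not} Azumaya. Your claim that $\operatorname{Br}(K)=0$ ``identifies the Morita class of the order with that of its center'' is false: Tsen's theorem only splits the generic fibre, $A\otimes_Z K\cong M_n(K)$, and says nothing about the integral Morita class. Being Azumaya over the center is itself a Morita invariant [Example~\ref{xxex2.2}(5)], so a non-Azumaya prime affine algebra of GK-dimension one with center $\Bbbk[x]$ is \emph{never} Morita equivalent to $\Bbbk[x]$; concrete examples are the path algebra $\Bbbk C_n$ of the cyclic quiver for $n\geq 2$ (prime, GK-dimension one, center $\Bbbk[w]$, not Azumaya --- see the proof of Lemma~\ref{xxlem6.2}) and the hereditary order of Example~\ref{xxex7.6} with $R=\Bbbk[x]$. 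Such ``generically split but ramified'' orders are exactly where the theorem is hardest, and your argument collapses there. Note also that your rigidity input is unavailable in this case: every nonzero locally nilpotent (higher) derivation of $\Bbbk[x][t]$ is allowed, so the plain Makar--Limanov invariant of $Z[t]$ is trivial and cannot recover $Z$. The paper's missing idea is the \emph{pinned} invariant $ML^H_d$: since $A$ is prime, hence torsion-free and free over $Z=\Bbbk[x]$, the Azumaya locus coincides with the simple locus [Lemma~\ref{xxlem5.6}(2)], so the stable Morita-invariant property ``being simple'' [Lemma~\ref{xxlem5.1}] has a discriminant $d$ that is a nonzero non-unit of $\Bbbk[x]$; $d$ is effective [Example~\ref{xxex2.9}], so $\Bbbk[x]$ is strongly $\LND^H_d$-rigid [Theorem~\ref{xxthm2.10}, Corollary~\ref{xxcor2.11}], which restores retractability and lets the specialization argument run [Proposition~\ref{xxpro5.7}(1)]. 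Without some such Morita-invariant ramification datum pinning the coordinate, the two polynomial presentations of the common center cannot be reconciled when $Z\cong\Bbbk[x]$.
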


We are mainly dealing with the Morita cancellation property in this paper, 
but occasionally, we have some results concerning the derived 
cancellation property such as the next result.

\begin{theorem}[Corollary \ref{xxcor7.2}]
\label{xxthm0.7}
Let $Z$ be a commutative domain. Then $Z$ is cancellative if and only
if $Z$ is Morita cancellative, if and only if $Z$ is derived cancellative.
\end{theorem}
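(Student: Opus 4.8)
The plan is to prove the two equivalences ``cancellative $\Leftrightarrow$ Morita cancellative'' and ``cancellative $\Leftrightarrow$ derived cancellative'' by exploiting two features of the passage to a commutative base. The first is that the center is invariant under both Morita and (standard) derived equivalence: if $M(A)$ is equivalent to $M(B)$, or if $D(A)$ is triangulated equivalent to $D(B)$, then the centers of $A$ and $B$ are isomorphic. Applied to the hypotheses $M(Z[t])\simeq M(B[t])$ (resp.\ $D(Z[t])\simeq D(B[t])$) this yields $C[t]\cong Z[t]$, where $C$ denotes the center of $B$, because $Z$ is already its own center and the center of a polynomial ring is the polynomial ring over the center. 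The second feature is rigidity: a ring that is Morita (resp.\ derived) equivalent to a commutative ring $R$ is forced to be $\End_R(P)$ for a progenerator $P$ (resp.\ $\End_{D(R)}(T)$ for a tilting complex $T$), and such structures descend under reduction modulo a central regular element.

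For the easy directions (Morita/derived cancellative $\Rightarrow$ cancellative), suppose $Z[t]\cong B'[t]$ as algebras. Then $B'[t]$ is commutative, hence so is $B'$, and the isomorphism gives $M(Z[t])\simeq M(B'[t])$ (resp.\ $D(Z[t])\simeq D(B'[t])$). The cancellation hypothesis produces $M(Z)\simeq M(B')$ (resp.\ $D(Z)\simeq D(B')$). Since $Z$ and $B'$ are both commutative, I invoke the fact that Morita-equivalent commutative rings are isomorphic (their centers, equal to the rings, coincide) and, for the derived version, the analogous statement that derived-equivalent commutative rings are isomorphic. Hence $Z\cong B'$ and $Z$ is cancellative.

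For the hard direction in the Morita case, assume $Z$ is cancellative and $M(Z[t])\simeq M(B[t])$. Center invariance gives $C[t]\cong Z[t]$, so cancellativity of $Z$ forces $Z\cong C$. By Morita theory $B[t]\cong \End_{Z[t]}(P)$ for a progenerator $P$ over the commutative ring $Z[t]$. Let $s\in Z[t]$ be the central element corresponding to the polynomial variable $t\in B[t]$ under $C[t]\cong Z[t]$; then $Z[t]/(s)\cong C$, and reducing modulo the central regular element $s$ I obtain
\[
B\;\cong\;B[t]/(t)\;\cong\;\End_{Z[t]}(P)/(s)\;\cong\;\End_{Z[t]/(s)}(P/sP)\;\cong\;\End_{C}(\overline{P}),
\]
using that $\End$ of a finitely generated projective commutes with base change. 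Since $\overline{P}=P/sP$ remains a progenerator over $C$ (positive rank is preserved along the closed immersion $\Spec C\hookrightarrow\Spec Z[t]$), we get $M(B)\simeq M(C)\simeq M(Z)$, as desired.

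The derived direction runs in parallel, replacing the progenerator $P$ by a tilting complex $T$ with $\End_{D(Z[t])}(T)\cong B[t]$, and the main obstacle is precisely the descent step: one must show that base-changing $T$ along the quotient by the central regular element $s$ yields a tilting complex over $Z[t]/(s)\cong C$ whose endomorphism algebra in $D(C)$ is $B$, so that $D(B)\simeq D(C)\simeq D(Z)$. This requires control of $T\otimes^{L}_{Z[t]}C$ (that it is perfect, self-orthogonal, and generates) and of its derived endomorphisms, and it is also where I rely on the derived equivalence being standard in order to legitimize the center-invariance used at the outset. Once this base-change lemma for tilting complexes is in hand, the argument closes exactly as in the Morita case.
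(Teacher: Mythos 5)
Your Morita half is correct and takes a genuinely different route from the paper's. Corollary \ref{xxcor7.2} obtains $Z(B)\cong Z$ exactly as you do (center invariance under Morita equivalence plus cancellativity of $Z$), but then finishes with Brauer groups: $B[s]$ is Morita equivalent to its own center $Z(B)[s]$, so its class in $Br(Z(B)[s])$ is trivial by \cite[Proposition 5.3]{AG}, and injectivity of $Br(Z(B))\to Br(Z(B)[s])$ together with \cite{Sc} or \cite{Ne} gives that $B$ is Morita equivalent to $Z(B)\cong Z$. Your progenerator computation --- writing $B[t]\cong\End_{Z[t]}(P)$, identifying the polynomial variable of $B[t]$ with a central element $s\in Z[t]$ satisfying $Z[t]/(s)\cong Z(B)$, and using that $\End$ of a finitely generated projective commutes with the base change $Z[t]\to Z[t]/(s)$ --- is precisely the elementary mechanism hidden inside that Brauer-group step (the cited injectivity comes from the same evaluation retraction), so this part of your argument is complete and more self-contained; what the paper's packaging buys is that it quotes off-the-shelf results instead of checking by hand that $P/sP$ stays a progenerator.

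The derived half, however, has a genuine gap, and you flag it yourself: the assertion that $T\otimes^{L}_{Z[t]}C$ is a tilting complex over $C$ whose endomorphism ring in $D(C)$ is $B$ is the entire new content of the derived case, and you leave it as an unproved ``base-change lemma''; you also lean on the equivalence being standard, which Definition \ref{xxdef0.2} does not grant you. The paper sidesteps both issues by quoting \cite[Proposition 5.1]{YZ1} (Proposition \ref{xxpro7.1}): a commutative ring is Azumaya over itself, and $\Spec Z[t]$ is connected because $Z$ is a domain, so \emph{any} triangulated equivalence $D(Z[t])\cong D(B[s])$ (likewise $D(Z)\cong D(B)$) already forces a Morita equivalence. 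This makes ``m-cancellative $\Leftrightarrow$ d-cancellative'' formal and reduces everything to the Morita case. For what it is worth, your missing lemma is provable: Rickard's theorem turns an arbitrary triangulated equivalence into a tilting complex $T$ with $\End_{D(Z[t])}(T)\cong B[s]$ (so standardness is not actually needed), and the element $s$ acts regularly both on $Z[t]$ and on $B[s]$ (it corresponds to the polynomial variable), which yields the Tor-independence making $\mathrm{RHom}$ commute with the base change $Z[t]\to C$; alternatively, over a commutative ring with connected spectrum every tilting complex is a shifted progenerator, which collapses the derived case to your Morita argument. But as written, the derived direction is a plan rather than a proof.
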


A question in \cite[Question 5.4(3)]{LWZ} asks if the Sklyanin algebras 
are cancellative. We partially answer this question.

\begin{corollary}[Example \ref{xxex5.10}(2)]
\label{xxcor0.8}
Let $A$ be a non-PI Sklyanin algebra of global dimension three.
Then $A$ is both cancellative and Morita cancellative.
\end{corollary}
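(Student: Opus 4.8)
The plan is to derive Corollary \ref{xxcor0.8} by verifying, for a non-PI Sklyanin algebra $A$ of global dimension three, the hypotheses of the connected-graded criterion developed in Section 5 (of which Example \ref{xxex5.10} is an application). I would begin by recording the standard structure theory: such an $A$ is a connected graded, noetherian, Artin--Schelter regular domain, generated in degree one by three elements with three quadratic relations, so in particular $A$ is a locally finite $\mathbb{N}$-graded algebra and is strongly Hopfian by Example \ref{xxex3.5}, and its group of units is just $\Bbbk^{\times}$. The essential feature separating the non-PI case from the PI case is the center: for a non-PI Sklyanin algebra the center is the polynomial ring $\Bbbk[g]$ on the canonical central element $g$ of degree three, which is far from artinian. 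Consequently Theorems \ref{xxthm0.3} and \ref{xxthm0.4} do not apply, and one genuinely needs the sharper machinery of Section 5 rather than the artinian-center hypothesis.

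The decisive input is that $A$ is \emph{LND-rigid}, i.e.\ the only locally nilpotent derivation of $A$ is zero, equivalently the Makar--Limanov invariant of $A$ equals $A$; this is exactly the place where the non-PI hypothesis is used. To see it, I would pass through the degree-three central element $g$ and the quotient $A/(g)$, which is the twisted homogeneous coordinate ring $B(E,\mathcal{L},\sigma)$ of an elliptic curve $E$ whose structure automorphism $\sigma$ is of infinite order precisely because $A$ is non-PI. A hypothetical nonzero element $\partial \in \LND(A)$ may be taken homogeneous of some degree by looking at its leading (or lowest-degree) part, and it would descend to, or assemble into, a nonzero homogeneous derivation of the geometric data $(E,\mathcal{L},\sigma)$; the infinite order of $\sigma$ rigidifies this data and forbids such a derivation, giving the contradiction. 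Thus $\LND(A)=\{0\}$.

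With LND-rigidity, trivial units, and the connected graded noetherian domain structure in hand, I would feed $A$ into the Section 5 criterion to obtain both conclusions at once. For ordinary cancellation the mechanism is the familiar one: given an isomorphism $A[t]\cong B[t]$, transport the locally nilpotent derivation $\partial/\partial t$ to a locally nilpotent derivation of $A[t]$ and use the rigidity of $A$ to identify its kernel with a copy of $A$, forcing $B\cong A$. For the Morita statement one runs the parallel argument Morita-invariantly: a Morita equivalence $M(A[t])\simeq M(B[t])$ is controlled through the connected graded (graded-local) structure of $A[t]$, in which finitely generated graded projectives are free, so that the equivalence is essentially induced by a free progenerator; combined with the LND-rigidity this recovers $M(B)\simeq M(A)$.

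The main obstacle is precisely the LND-rigidity computation for the non-PI Sklyanin algebra; everything else is either standard structure theory or a direct appeal to the Section 5 criterion. Establishing $\LND(A)=\{0\}$ cannot be bypassed by a formal argument, since in the PI case (finite order $\sigma$, larger center) the algebra becomes finite over its center and the rigidity typically breaks down; the proof must therefore exploit, in an essential way, the geometry of the elliptic curve $E$ and the infinite order of the translation $\sigma$ that characterizes the non-PI regime.
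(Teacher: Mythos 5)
Your proposal rests on the wrong rigidity statement, and this creates two genuine gaps. First, what you call the decisive input --- $\LND(A)=\{0\}$, i.e.\ $ML(A)=A$ --- is neither what the paper uses nor sufficient for the argument you sketch. For ordinary cancellation the known mechanism (from \cite{BZ1}) requires \emph{strong} $\LND^H$-rigidity, i.e.\ $ML^H(A[t_1,\ldots,t_n])=A$ for all $n$, and passing from $\LND(A)=\{0\}$ to control of locally nilpotent higher derivations of $A[t_1,\ldots,t_n]$ is not formal; moreover your sketch of even the weaker statement (descending a hypothetical nonzero $\delta\in\LND(A)$ to the twisted homogeneous coordinate ring $A/(g)$) is incomplete, since it presumes $\delta(g)=0$ and that vanishing of the induced derivation on $A/(g)$ forces $\delta=0$. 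Second, and more seriously, the Morita half of your argument does not work as described: a Morita equivalence $M(A[t])\simeq M(B[s])$ carries no grading, so ``finitely generated graded projectives are free'' says nothing about the progenerator bimodule, and there is no way to make a locally nilpotent derivation of $A$ act through an abstract Morita equivalence. The only canonical algebra map such an equivalence provides is the isomorphism $\omega$ of centers \eqref{E2.1.1}, so any workable proof must pin down $\omega$.

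That is exactly what the paper does, and its key step is entirely different from yours: it computes the simple locus of $A$ over its center $\Bbbk[g]$. Concretely, Example \ref{xxex5.10}(2) proves that $A/(g-\alpha)A$ is simple if and only if $\alpha\neq 0$ (via the third Veronese subring $T$, the simplicity of $(A[g^{-1}])_0$, and \cite[Lemma 2.1]{RSS}), so the ${\mathcal S}$-discriminant of $A$ exists and equals $g$, where ${\mathcal S}$ is the \emph{Morita-invariant} property of being simple. The rigidity actually invoked is then rigidity of the \emph{center}, not of $A$: any nonzero non-invertible element of $\Bbbk[g]$ is effective (Example \ref{xxex2.9}), hence $\Bbbk[g]$ is strongly $\LND^H_g$-rigid (Theorem \ref{xxthm2.10}); since $\omega$ preserves discriminant ideals of Morita-invariant properties (Lemma \ref{xxlem2.5}(2)), Proposition \ref{xxpro2.7}(2), packaged as Corollary \ref{xxcor2.11}, forces $\omega(Z(A))=Z(B)$, i.e.\ strong m-$Z$-retractability. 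Then Lemma \ref{xxlem3.4} gives strong m-detectability, and Lemma \ref{xxlem3.6}(2) (using that $A$ is noetherian, hence strongly Hopfian) yields both strong cancellation and strong Morita cancellation at once. So the non-PI hypothesis enters through the simplicity of the central quotients $A/(g-\alpha)A$, not through $\LND(A)=\{0\}$, which is never proved or needed. To repair your proposal you would have to replace the LND-rigidity of $A$ by this computation of the discriminant of a Morita-invariant property, or else supply a genuinely new argument for transporting rigidity of $A$ itself across a Morita equivalence.
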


The paper is organized as follows. Section 1 contains definitions, known
examples and preliminaries. In Sections 2 and 3, we introduce the Morita 
version of the retractable and detectable properties. In Section 4, we 
prove Theorems \ref{xxthm0.3} and \ref{xxthm0.4}. Theorems \ref{xxthm0.6} 
and \ref{xxthm0.7} are proven in Section 6 and Section 5 respectively. 
The derived cancellation property is briefly studied in Section 7. 
Section 7 also contains some comments, remarks and examples.

\section{Definitions and Preliminaries}
\label{xxsec1}

Some definitions and examples are copied from \cite{BZ1, LWZ}. First we 
recall a classical definition. Let $A[t]$ (or $A[s]$) be the polynomial 
algebra over $A$ by adding one central indeterminate.

\begin{definition}
\label{xxdef1.1}
Let $A$ be an algebra.
\begin{enumerate}
\item[(1)]
We call $A$ {\it cancellative} if any algebra isomorphism 
$A[t] \cong B[s]$ for some algebra $B$ implies that $A\cong B$.
\item[(2)]
We call $A$ {\it strongly cancellative} if, for each $n \geq 1$, 
any algebra isomorphism
$$A[t_1, \cdots, t_n]\cong B[s_1,\cdots,s_n]$$
 for some algebra $B$ implies that $A \cong B$.
\end{enumerate}
\end{definition}

The following are two new cancellation properties which we will study 
in the present paper.

\begin{definition}
\label{xxdef1.2}
Let $A$ be an algebra.
\begin{enumerate}
\item[(1)]
We call $A$ {\it m-cancellative} if any equivalence of abelian categories
$M(A[t]) \cong M(B[s])$  for some algebra $B$ implies that $M(A)\cong M(B)$.
\item[(2)]
We call $A$ {\it strongly m-cancellative} if, for each $n \geq 1$, 
any equivalence of abelian categories
$$M(A[t_1, \cdots, t_n])\cong M(B[s_1,\cdots,s_n])$$
for some algebra $B$ implies that $M(A) \cong M(B)$. 
\end{enumerate}
The letter $m$ here stands for the word ``Morita''.
\end{definition}

\begin{definition}
\label{xxdef1.3}
Let $A$ be an algebra.
\begin{enumerate}
\item[(1)]
We call $A$ {\it d-cancellative} if any equivalence of triangulated categories
$$D(A[t]) \cong D(B[s])$$ for some algebra $B$ implies that $D(A)\cong D(B)$.
\item[(2)]
We call $A$ {\it strongly d-cancellative} if, for each $n \geq 1$, 
any equivalence of triangulated categories
$$D(A[t_1, \cdots, t_n])\cong D(B[s_1,\cdots,s_n])$$
for some algebra $B$ implies that $D(A) \cong D(B)$. 
\end{enumerate}
The letter $d$ here stands for the word ``derived''.
\end{definition}

Let $A[\underline{t}]$ denote the polynomial algebra $A[t_1,\cdots,t_n]$
and $A[\underline{s}]$ denote  the polynomial algebra $A[s_1,\cdots,s_n]$
for an integer $n$ (that is not specified) 
when no confusion occurs.

\begin{lemma}
\label{xxlem1.4} Let $A$ be a commutative algebra that is not 
{\rm{(}}strongly{\rm{)}} cancellative. Let $B$ be an algebra
with center $Z(B)=\Bbbk$. Then $A\otimes B$ is neither
{\rm{(}}strongly{\rm{)}}  cancellative, nor 
{\rm{(}}strongly{\rm{)}}  m-cancellative, nor 
{\rm{(}}strongly{\rm{)}}  d-cancellative.
\end{lemma}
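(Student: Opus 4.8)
The plan is to manufacture, out of a failure of cancellation for $A$, an explicit failure for $A\otimes B$, and to detect that failure by a single invariant that survives all three kinds of equivalence, namely the center. Since $A$ is not (strongly) cancellative, there is an algebra $C$ and an isomorphism $A[\underline{t}]\cong C[\underline{s}]$ (with a single variable in the ordinary case, with $n$ variables in the strong case) such that $A\not\cong C$. Because $A$ is commutative, so is $A[\underline{t}]\cong C[\underline{s}]$, and hence $C$ is commutative as well. I would take $C\otimes B$ as the comparison algebra witnessing that $A\otimes B$ fails each cancellation property; the whole argument is uniform in the number of variables, so it treats the plain and ``strong'' versions simultaneously.

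First I would check that the failure of cancellation tensors up. Adjoining central indeterminates is the same as tensoring over $\Bbbk$ with a commutative polynomial ring, so associativity and commutativity of $\otimes_{\Bbbk}$ give $(A\otimes B)[\underline{t}]\cong (A[\underline{t}])\otimes B\cong (C[\underline{s}])\otimes B\cong (C\otimes B)[\underline{s}]$, where the middle isomorphism is the one induced by $A[\underline{t}]\cong C[\underline{s}]$. Thus $(A\otimes B)[\underline{t}]$ and $(C\otimes B)[\underline{s}]$ are isomorphic as algebras, whence their module categories are equivalent and their derived categories are triangulated equivalent. To conclude that $A\otimes B$ is neither (strongly) cancellative, nor m-cancellative, nor d-cancellative, it then remains only to show that $A\otimes B$ and $C\otimes B$ are not isomorphic, not Morita equivalent, and not derived equivalent.

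All three non-equivalences follow from the center. Over a field one has $Z(X\otimes_{\Bbbk}Y)=Z(X)\otimes_{\Bbbk}Z(Y)$, so the hypothesis $Z(B)=\Bbbk$ yields $Z(A\otimes B)=Z(A)\otimes\Bbbk=A$ and likewise $Z(C\otimes B)=C$. The center is an isomorphism invariant, a Morita invariant (it is the endomorphism ring of the identity functor of the module category), and a derived invariant (it is recovered as $HH^{0}$). Hence an isomorphism, a Morita equivalence, or a derived equivalence between $A\otimes B$ and $C\otimes B$ would force $A=Z(A\otimes B)\cong Z(C\otimes B)=C$, contradicting $A\not\cong C$. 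With $C\otimes B$ in the role of the comparison algebra in each definition, this gives all the asserted negative conclusions.

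The one delicate point is the derived invariance of the center. For finite-dimensional algebras it is classical that a derived equivalence preserves Hochschild cohomology, in particular $HH^{0}=Z$; but here $A\otimes B$ and $C\otimes B$ may be infinite-dimensional, so I would invoke that a derived equivalence of $\Bbbk$-algebras is induced by a two-sided tilting complex and therefore induces an isomorphism on $HH^{0}=Z$. Pinning this invariance down in the possibly infinite-dimensional setting is the main obstacle; by contrast, the isomorphism and Morita cases need only the elementary invariance of the center, and the tensor identity $Z(X\otimes_{\Bbbk}Y)=Z(X)\otimes_{\Bbbk}Z(Y)$ over a field is routine (one writes a central element with a $\Bbbk$-independent family of tensor factors and commutes it past $X\otimes 1$ and $1\otimes Y$).
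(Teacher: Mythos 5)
Your proposal is correct and follows essentially the same route as the paper's own proof: produce the witness $C$ from the failure of cancellation for $A$, tensor the isomorphism $A[\underline{t}]\cong C[\underline{s}]$ with $B$ to get equivalences at all three levels, and then distinguish $A\otimes B$ from $C\otimes B$ by their centers $A$ and $C$. The only difference is that you spell out the derived invariance of the center (via two-sided tilting complexes and $HH^{0}$), a point the paper uses implicitly without comment.
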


\begin{proof} 
%
Since $A$ is not (strongly) cancellative, there is a commutative
algebra $C$ such that $A$ is not isomorphic to $C$, but $A[t_1,\cdots,
t_n]\cong C[s_1,\cdots, s_n]$ for $n=1$ (or some $n\geq 1$). Then
$A\otimes B [\underline{t}]\cong C\otimes B[ \underline{s}]$.
As a consequence, we obtain that 
$$M(A\otimes B[\underline{t}])\cong M(C\otimes B[\underline{s}]) \textrm{ and }
D(A\otimes B[\underline{t}])\cong D(C\otimes B[\underline{s}]).$$
Since the center $Z(A\otimes B)=A$ which is not isomorphic to
$Z(C\otimes B)=C$, we obtain that $M(A\otimes B) \ncong M(C\otimes B)$  
and that $D(A\otimes B) \ncong D(C\otimes B)$. Therefore the assertions 
follow.
\end{proof}

Next we give some precise examples of non-cancellative 
commutative algebras. The above lemma gives an easy way of 
producing non-cancellative noncommutative algebras.

\begin{example}
\label{xxex1.5}
\begin{enumerate}
\item[(1)]
Let $\Bbbk$ be the field of real numbers ${\mathbb R}$.
Hochster showed that $\Bbbk[x,y,z]/(x^2+y^2+z^2-1)$ 
is not cancellative \cite{Ho}.
\item[(2)]
The following example is due to Danielewski \cite{Da}. 
Let $n\geq 1$ and let $B_n$ be the coordinate ring of 
the surface $x^n y=z^2-1$ over $\Bbbk:={\mathbb C}$. 
Then $B_i\not\cong B_j$ if $i\neq j$, but $B_i[t]\cong 
B_j[s]$ for all $i,j\geq 1$, see \cite{Fi, Wi} for more details.
Therefore, all the $B_n$'s are not cancellative.
\item[(3)]
Suppose ${\rm char}\;\Bbbk>0$. Gupta showed that 
$\Bbbk[x_1,\cdots, x_n]$ is not cancellative for every $n\geq 3$
\cite{Gu1,Gu2}.
\end{enumerate}
As a consequence of Lemma \ref{xxlem1.4} (by taking $B=\Bbbk$), 
the algebras above are neither m-cancellative nor d-cancellative. 
\end{example}

We also need to recall higher derivations and Makar-Limanov 
invariants.

\begin{definition}
\label{xxdef1.6} Let $A$ be an algebra.
\begin{enumerate}
\item[(1)] \cite{HS}
A {\it higher derivation} (or {\it Hasse-Schmidt derivation})
on $A$ is a sequence of $\Bbbk$-linear endomorphisms
$\partial:=\{\partial_i\}_{i=0}^{\infty}$ such that:
$$\partial_0 = id_A, \quad
\text{and} \quad  \partial_n(ab) =\sum_{i=0}^n
\partial_i(a)\partial_{n-i}(b)
$$
for all $a, b \in A$ and all $n\geq 0$. The collection of
all higher derivations on $A$ is denoted by ${\rm Der}^H(A)$.
\item[(2)]
A higher derivation is called {\it locally nilpotent} if
\begin{enumerate}
\item
given any $a\in A$ there exists $n\geq 1$ such that 
$\partial_i(a)=0$ for all $i\geq n$,
\item
the map
$$G_{\partial,t}:A[t]\to A[t] \qquad\qquad\qquad\qquad $$
defined by
$$ \qquad\qquad \qquad\qquad
a\mapsto \sum_{i=0}^{\infty} \partial_i(a)t^i 
{\text{ for all $a\in A$, and }} \; t\mapsto t
$$
is an algebra automorphism of $A[t]$.
\end{enumerate}
\item[(3)]
For any $\partial\in {\rm Der}^H(A)$, the kernel of $\partial$
is defined to be
$$\ker \partial =\bigcap_{i\geq 1} \ker \partial_i.$$
\item[(4)]
The set of locally nilpotent higher derivations is denoted by
$\LND^H(A)$. Given a nonzero element $d\in A$, let 
\begin{equation}
\label{E1.6.1}
\LND^H_{d}(A)=\{\partial \in \LND^H(A) \mid d \in \ker\partial\}.
\end{equation}
\end{enumerate}
\end{definition}

Note that (a) in part (2) of the above definition implies that 
the map $G_{\partial,t}$ defined in (b) is an algebra endomorphism.
It is not clear to us whether $G_{\partial,t}$ is automatically 
an automorphism. However, by \cite[Lemma 2.2(2)]{BZ1}, when 
$\partial$ is an iterative higher derivation, $G_{\partial,t}$
is automatically an automorphism.

It is easy to see that $1 \in \ker \partial$ for all higher 
derivations $\partial$. Hence
$\LND^H_{1}(A)=\LND^H(A)$. We generalize the original definition
of the Makar-Limanov invariant \cite{Mak}.

\begin{definition}
\label{xxdef1.7}
Let $A$ be an algebra and $d$ a nonzero element in $A$.
\begin{enumerate}
\item[(1)] 
The {\it Makar-Limanov$^H_d$ invariant} of $A$ is defined to be
\begin{equation}
\label{E1.7.1}\tag{E1.7.1}
ML^H_d(A) := \bigcap_{\delta\in {\rm LND}^H_d(A)} {\rm ker}(\delta).
\end{equation}
\item[(2)] 
We say that $A$ is \emph{$\LND^H_d$-rigid} if $ML^H_d(A)=A$.
\item[(3)]
$A$ is called \emph{strongly $\LND^H_d$-rigid} if 
$ML^H_d(A[t_1,\ldots,t_n])=A$, for all $n\geq 1$.
\item[(4)]
The {\it Makar-Limanov$^H_d$ center} of $A$ is defined to be
\begin{equation}
\notag
ML^H_{d,Z}(A) =ML^H_d(A) \cap Z(A).
\end{equation}
\item[(5)]
$A$ is called \emph{strongly $\LND^H_{d,Z}$-rigid} if 
$ML^H_{d,Z}(A[t_1,\ldots,t_n])=Z(A)$, for all
$n\geq 1$.
\end{enumerate}
\end{definition}

\section{Morita invariant properties and the ${\mathcal P}$-discriminant}
\label{xxsec2}
In this section we will recall some well-known facts about 
Morita equivalence. Two algebras $A$ and $B$ are 
Morita equivalent if their right module categories $M(A)$
and $M(B)$ are equivalent. We list some properties 
concerning Morita theory. 

\begin{lemma}
\cite[Ch. 6]{AF}
\label{xxlem2.1} 
Let $A$ and $B$ be two algebras that are Morita equivalent.
\begin{enumerate}
\item[(1)]
There is an $(A,B)$-bimodule $\Omega$ that is invertible, namely,
$\Omega\otimes_B \Omega^{\vee}\cong A$ and $\Omega^{\vee}\otimes_A
\Omega\cong B$ as bimodules, where $\Omega^{\vee}:=\Hom_{B}(\Omega_B, B_B)$.
\item[(2)]
The bimodule $\Omega$ induces naturally algebra isomorphisms 
$A \cong \End(\Omega_B)$ and $ B^{op} \cong \End(_A\Omega)$.
\item[(3)]
Further, $Z(A)\cong \Hom_{(A,B)}(\Omega, \Omega)\cong Z(B)$, which induces 
an isomorphism
\begin{equation}
\label{E2.1.1}\tag{E2.1.1}
\omega: Z(A)\to Z(B)
\end{equation}
such that, for each $x\in Z(A)$, the left multiplication 
of $x$ on $\Omega$ equals the right multiplication of $\omega(x)$
on $\Omega$.
\item[(4)]
By using $\omega$ to identify the center $Z=Z(A)$ of $A$ with 
the center  of $B$, then both $A$ and $B$ are central $Z$-algebras. 
In this case both $\Omega$ and $\Omega^{\vee}$ are central $Z$-modules. 
\item[(5)]
Let $\omega$ be given as in \eqref{E2.1.1}. Then, for any ideal $I$ of 
$Z(A)$, $A/IA$ and $B/\omega(I)B$ are Morita equivalent.
\item[(6)]
\cite[Ex.9, p.267]{AF}
Let $A,B,T$ be $K$-algebra for some commutative ring $K$.
Then $A\otimes_K T$ and $B\otimes_K T$ are Morita equivalent. 
\end{enumerate}
\end{lemma}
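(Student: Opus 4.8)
The plan is to deduce parts (1), (2) and (6) directly from the classical Morita theorems \cite[Ch.~6]{AF}, and then to establish the center and reduction statements (3)--(5) by an explicit computation with the progenerator bimodule. Concretely, I would first invoke the Eilenberg--Watts form of Morita theory: the given equivalence $M(A)\cong M(B)$ is naturally isomorphic to $-\otimes_A \Omega$ for the $(A,B)$-bimodule $\Omega$ obtained as the image of $A_A$, which is a progenerator as a right $B$-module, with quasi-inverse $-\otimes_B \Omega^{\vee}$ where $\Omega^{\vee}=\Hom_B(\Omega_B,B_B)$. Evaluating the natural isomorphisms between the composite functors and the identity functors at $A$ and at $B$ produces the invertibility isomorphisms $\Omega\otimes_B\Omega^{\vee}\cong A$ and $\Omega^{\vee}\otimes_A\Omega\cong B$ of (1), and the standard progenerator identities $A\cong\End(\Omega_B)$ and $B^{op}\cong\End({}_A\Omega)$ of (2).

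For (3) I would identify the bimodule endomorphism ring $\Hom_{(A,B)}(\Omega,\Omega)$ with the center on each side: a bimodule endomorphism is exactly an element of $\End(\Omega_B)$ that also commutes with the left $A$-action, which under the isomorphism of (2) is precisely $Z(A)$; the symmetric argument through $B^{op}\cong\End({}_A\Omega)$ gives $Z(B)$, and composing the two yields $\omega$. The displayed compatibility is then the tautology that the bimodule endomorphism attached to $x\in Z(A)$ is realized simultaneously as left multiplication by $x$ and as right multiplication by $\omega(x)$. Part (4) follows formally: after identifying the centers via $\omega$, this compatibility says the left and right $Z$-actions on $\Omega$ (and dually on $\Omega^{\vee}$) agree, so both are central $Z$-modules.

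The one substantive step is (5). Writing $Z=Z(A)$ and $\bar\Omega:=\Omega/I\Omega$, centrality from (4) gives $I\Omega=\Omega\,\omega(I)$, so $\bar\Omega$ is naturally an $(A/IA,\,B/\omega(I)B)$-bimodule, and in fact $\bar\Omega\cong\Omega\otimes_Z Z/I$. I would then reduce the invertibility isomorphisms of (1) modulo $I$, using the base-change identity $(\Omega\otimes_B\Omega^{\vee})\otimes_Z Z/I\cong\bar\Omega\otimes_{B/\omega(I)B}\overline{\Omega^{\vee}}$ (and its mirror on the other side), to conclude that $\bar\Omega$ is invertible with dual $\overline{\Omega^{\vee}}$, whence $A/IA$ and $B/\omega(I)B$ are Morita equivalent. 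The main obstacle I anticipate is exactly this reduction: one must check that base change along the central surjection $Z\to Z/I$ commutes with the relevant (bi)module tensor products and carries $\Omega^{\vee}$ to the dual of $\bar\Omega$, without secretly needing a flatness hypothesis. This is precisely where the centrality established in (4) does the real work, since it makes the left and right $Z$-actions coincide so that reduction by $I$ is unambiguous on both factors.

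Finally, (6) is recorded directly from \cite[Ex.~9, p.~267]{AF}: base changing the progenerator to $\Omega\otimes_K T$ gives an invertible $(A\otimes_K T,\,B\otimes_K T)$-bimodule, which furnishes the equivalence $M(A\otimes_K T)\cong M(B\otimes_K T)$. Assembling these pieces completes the lemma, with (1), (2), (6) essentially quotations of \cite{AF} and (3), (4), (5) the content that requires verification here.
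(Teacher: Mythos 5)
The paper gives no proof of this lemma at all---it is quoted directly from \cite[Ch.~6]{AF}---and your proposal correctly reconstructs exactly the standard Morita-theoretic arguments that citation refers to: the progenerator bimodule and its invertibility, the identification $Z(A)\cong\Hom_{(A,B)}(\Omega,\Omega)\cong Z(B)$, and the reduction of the invertibility isomorphisms modulo a central ideal. In particular your treatment of (5) is sound as written: since $x\cdot m=m\cdot\omega(x)$ forces $I\Omega=\Omega\,\omega(I)$, the base-change identity $M\otimes_B N\cong(M/MJ)\otimes_{B/J}N$ for $JN=0$ needs no flatness, so $\bar\Omega$ and $\overline{\Omega^{\vee}}$ are mutually inverse bimodules over the quotient rings and the equivalence follows.
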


Morita equivalences have been studied extensively for decades. A ring 
theoretic property is called a {\it Morita invariant} if it is preserved 
by Morita equivalences. 

\begin{example}
\label{xxex2.2}
The following properties are Morita invariants.
\begin{enumerate}
\item[(1)]
being simple (respectively, semisimple);
\item[(2)]
being right (or left) noetherian, right (or left) artinian;
\item[(3)]
having global dimension $d$ (Krull dimension $d$, 
GK-dimension $d$, etc);
\item[(4)]
being a full matrix
algebra $M_n(\Bbbk)$ for some $n$, when 
$\Bbbk$ is algebraically closed; 
\item[(5)]
being an Azumaya algebra \cite[Theorem 4]{Sc};
\item[(6)]
being quasi-Frobenius;
\item[(7)]
being prime, semiprime, right (or left) primitive,  semiprimitive;
\item[(8)]
being semilocal, 
\item[(9)]
being primitive, but not simple;
\item[(10)]
being noetherian, but not artinian;
\item[(11)]
the center being $\Bbbk$;
\item[(12)]
being projective over its center.
\end{enumerate}
\end{example}

Let $R$ be a commutative algebra, $\Spec R$ be the prime spectrum of $R$ and 
$\MaxSpec(R):=\{\fm \mid \fm {\text{ is a maximal ideal of }} R\}$ be the 
maximal spectrum of $R$. For any $S \subseteq \Spec R$, $I(S)$ is the ideal 
of $R$ vanishing on $S$, namely,
$$I(S)=\bigcap_{{\mathfrak p}\in S} {\mathfrak p}.$$ 

For any algebra $A$, $A^{\times}$ denotes the 
set of invertible elements in $A$. 

A property ${\mathcal P}$ considered in the following
means a property defined on a class of algebras that 
is an invariant under algebra isomorphisms. 
\begin{definition}
\label{xxdef2.3} 
Let $A$ be an algebra, $Z=Z(A)$ be the center of $A$. 
Let ${\mathcal P}$ be a property defined for $\Bbbk$-algebras
{\rm{(}}not necessarily a Morita invariant{\rm{)}}. 
\begin{enumerate}
\item[(1)]
The {\it ${\mathcal P}$-locus} of $A$ is defined to be
$$L_{\mathcal P}(A):=\{ \fm \in \MaxSpec(Z)\mid A/\fm A {\text{ has 
property }} {\mathcal P}\}.$$
\item[(2)]
The {\it ${\mathcal P}$-discriminant set} of $A$ is defined to be
$$D_{\mathcal P}(A):=\MaxSpec(Z)\setminus L_{\mathcal P}(A).$$
\item[(3)]
The {\it ${\mathcal P}$-discriminant ideal} of $A$ is defined to be
$$I_{\mathcal P}(A):=I(D_{\mathcal P}(A))\subseteq Z.$$
\item[(4)]
If $I_{\mathcal P}(A)$ is a principal ideal of $Z$ generated by $d\in Z$,
then $d$ is called the {\it ${\mathcal P}$-discriminant} of $A$, denoted 
by $d_{\mathcal P}(A)$. In this case $d_{\mathcal P}(A)$ is unique up to an element
in $Z^{\times}$.
\item[(5)]
Let ${\mathcal C}$ be a class of algebras over $\Bbbk$. We say that 
${\mathcal P}$ is {\it ${\mathcal C}$-stable} if for every algebra $A$ 
in ${\mathcal C}$ and every $n\geq 1$,
$$I_{\mathcal P}(A\otimes \Bbbk[t_1,\cdots,t_n])=I_{\mathcal P}(A)
\otimes \Bbbk[t_1,\cdots, t_n]$$
as an ideal of $Z\otimes \Bbbk[t_1,\cdots,t_n]$. If ${\mathcal C}$ is a singleton
$\{A\}$, we simply call ${\mathcal P}$ {\it $A$-stable}.
If ${\mathcal C}$ is the whole collection of $\Bbbk$-algebras with the
center affine over $\Bbbk$, we simply call ${\mathcal P}$ {\it stable}.
\end{enumerate}
\end{definition}

In general, neither $L_{\mathcal P}(A)$ nor $D_{\mathcal P}(A)$ 
is a subscheme of $\Spec Z(A)$. 

\begin{example}
\label{xxex2.4} Suppose $\Bbbk={\mathbb C}$. 
Let $A$ be the universal enveloping algebra
of the simple Lie algebra $sl_2$.  It is well known that $Z(A)=\Bbbk[Q]$
where $Q=2(ef + fe) +h^2$. 

Let ${\mathcal S}$ be the property of being simple. Then 
$D_{\mathcal S}(A)$ is the set of integer points of the form
$\{n^2+2n\mid n \in {\mathbb N}\}$ inside the $\MaxSpec \Bbbk[Q]$, 
see \cite{Di} or \cite[p.98]{Sm}. In this case, the ${\mathcal S}$-discriminant 
ideal of $A$ is the zero ideal of $\Bbbk[Q]$ and the 
${\mathcal S}$-discriminant of $A$ is the element $0\in \Bbbk[Q]$.

Note from \cite{Di} or \cite[p.98]{Sm} that for each $c=n^2+2n$, $A/(Q-c)A$ has a 
unique proper two-sided ideal $M_c$ and $M_c$ is of codimension 
$(n+1)^2$. Let ${\mathcal P}_{n}$ be the property of not having a factor 
ring isomorphic to the matrix algebra $M_{n+1}(\Bbbk)$.
Then $D_{{\mathcal P}_n}(A)$ is the singleton $\{n^2+2n\}$, as a subset of
$D_{\mathcal S}(A)$.
As a consequence, the ${\mathcal P}_n$-discriminant ideal of $A$ is 
$(Q-(n^2+2n))\subseteq \Bbbk[Q]$ and the ${\mathcal P}_n$-discriminant of $A$ 
is the element $Q-(n^2+2n)\in \Bbbk[Q]$.

It is clear that ${\mathcal S}$ is a Morita invariant, but 
${\mathcal P}_n$ is not for each fixed $n$.
\end{example}

\begin{lemma}
\label{xxlem2.5} 
Let ${\mathcal P}$ be a property.
\begin{enumerate}
\item[(1)]
Suppose $\phi: A\to B$ is an isomorphism. Then $\phi$ 
preserves the following:
\begin{enumerate}
\item[(1a)]
${\mathcal P}$-locus.
\item[(1b)]
${\mathcal P}$-discriminant set.
\item[(1c)]
${\mathcal P}$-discriminant ideal.
\item[(1d)]
${\mathcal P}$-discriminant {\rm{(}}if it exists{\rm{)}}.
\end{enumerate}
\item[(2)]
Suppose that ${\mathcal P}$ is a  Morita invariant and that $A$ and
$B$ are Morita equivalent. Then the algebra map 
$\omega$ in \eqref{E2.1.1} preserves the following:
\begin{enumerate}
\item[(2a)]
${\mathcal P}$-locus.
\item[(2b)]
${\mathcal P}$-discriminant set.
\item[(2c)]
${\mathcal P}$-discriminant ideal.
\item[(2d)]
${\mathcal P}$-discriminant {\rm{(}}if it exists{\rm{)}}.
\end{enumerate}
\end{enumerate}
\end{lemma}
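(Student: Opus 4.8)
The plan is to treat both parts with a single template: prove the statement about the $\mathcal{P}$-locus, i.e.\ (1a) and (2a), and then deduce (b), (c), (d) formally by passing to complements inside $\MaxSpec$, to vanishing ideals, and to principal generators. The two parts differ only in the mechanism that transports property $\mathcal{P}$ across corresponding factor rings: in part (1) it is the invariance of $\mathcal{P}$ under algebra isomorphisms, and in part (2) it is the Morita invariance of $\mathcal{P}$ together with Lemma \ref{xxlem2.1}(5).

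For part (1), I would first note that the algebra isomorphism $\phi\colon A\to B$ restricts to a ring isomorphism $\phi|_{Z}\colon Z(A)\to Z(B)$, which carries maximal ideals to maximal ideals and so induces a bijection $\MaxSpec(Z(A))\to\MaxSpec(Z(B))$ sending $\fm$ to $\phi(\fm)$. The key step is that surjectivity of $\phi$ gives $\phi(\fm A)=\phi(\fm)B$, so $\phi$ descends to an algebra isomorphism $A/\fm A\cong B/\phi(\fm)B$ for every $\fm\in\MaxSpec(Z(A))$. Since $\mathcal{P}$ is an invariant under algebra isomorphisms, $A/\fm A$ has property $\mathcal{P}$ if and only if $B/\phi(\fm)B$ does, which is exactly the equivalence $\fm\in L_{\mathcal{P}}(A)\iff\phi(\fm)\in L_{\mathcal{P}}(B)$, proving (1a). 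Statement (1b) is then immediate by taking complements inside the maximal spectra. For (1c), I would use that $\phi|_{Z}$ is a ring isomorphism commuting with intersections, so it maps $I_{\mathcal{P}}(A)=\bigcap_{\fm\in D_{\mathcal{P}}(A)}\fm$ onto $\bigcap_{\fm\in D_{\mathcal{P}}(A)}\phi(\fm)=\bigcap_{\mathfrak{n}\in D_{\mathcal{P}}(B)}\mathfrak{n}=I_{\mathcal{P}}(B)$, the middle equality holding because $\fm\mapsto\phi(\fm)$ is a bijection $D_{\mathcal{P}}(A)\to D_{\mathcal{P}}(B)$. Finally (1d) follows because a ring isomorphism sends a principal ideal $(d)$ to the principal ideal $(\phi(d))$, so $d_{\mathcal{P}}(B)=\phi(d_{\mathcal{P}}(A))$ up to a unit.

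For part (2), I would repeat this argument almost verbatim, with $\phi|_{Z}$ replaced by the center isomorphism $\omega\colon Z(A)\to Z(B)$ of \eqref{E2.1.1}. The only step needing a different justification is the analogue of the descent to factor rings: in place of an isomorphism $A/\fm A\cong B/\omega(\fm)B$, Lemma \ref{xxlem2.1}(5) supplies, for each maximal ideal $\fm$ of $Z(A)$, a \emph{Morita equivalence} between $A/\fm A$ and $B/\omega(\fm)B$ (applying that lemma with $I=\fm$). Because $\mathcal{P}$ is now assumed to be a Morita invariant, one factor ring has property $\mathcal{P}$ exactly when the other does, giving $\fm\in L_{\mathcal{P}}(A)\iff\omega(\fm)\in L_{\mathcal{P}}(B)$ and hence (2a). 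Since $\omega$ is again a ring isomorphism of centers carrying maximal ideals to maximal ideals, the deductions of (2b), (2c), (2d) from (2a) are identical to those in part (1).

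The whole argument is essentially bookkeeping once the two transfer mechanisms are isolated, so I do not expect a serious obstacle. The one point to watch is the compatibility $\phi(\fm A)=\phi(\fm)B$ in part (1), and, correspondingly, the identification of the ideal $\omega(\fm)B$ appearing in Lemma \ref{xxlem2.1}(5) with the one indexing the factor ring $B/\omega(\fm)B$ in part (2); these ensure that the factor rings being compared are precisely those indexed by matching points of the two maximal spectra, which is what makes the bijection on loci well defined.
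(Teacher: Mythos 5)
Your proposal is correct and follows exactly the route the paper takes: part (1) is the routine transfer along the induced isomorphism of centers (which the paper dismisses as ``Clear''), and part (2) rests on Lemma \ref{xxlem2.1}(5) applied with $I=\fm$ together with the Morita invariance of $\mathcal{P}$, which is precisely the paper's one-line justification. Your write-up simply supplies the bookkeeping details (e.g.\ $\phi(\fm A)=\phi(\fm)B$ and the passage from loci to ideals and generators) that the authors leave implicit.
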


\begin{proof} (1) Clear.

(2) This follows from the definition, Lemma \ref{xxlem2.1}(5) and the
hypothesis that ${\mathcal P}$ is a Morita invariant.
\end{proof}

In this and the next sections we study two properties
that are closely related to the m-cancellative property. 
The retractable property was introduced in \cite[Definitions 
2.1 and 2.5]{LWZ}. Next we generalize $Z$-retractability
to the Morita setting.

\begin{definition}
\label{xxdef2.6}
Let $A$ be an algebra.
\begin{enumerate}
\item[(1)] \cite[Definition 2.5(1)]{LWZ}
We call $A$ {\it $Z$-retractable}, 
if for any algebra $B$, any algebra isomorphism $\phi: A[t] 
\cong B[s]$ implies that $\phi(Z(A))=Z(B)$.
\item[(2)] \cite[Definition 2.5(2)]{LWZ}
We call $A$ {\it strongly $Z$-retractable}, if for any algebra $B$ 
and integer $n\geq 1$, any algebra isomorphism 
$\phi: A[t_1,\dots, t_n] \cong B[s_1,\dots, s_n]$ 
implies that $\phi(Z(A))=Z(B)$.
\item[(3)]
We call $A$ {\it m-$Z$-retractable} if, for any algebra $B$, any
equivalence of categories $M(A[t])\cong M(B[s])$ implies that 
$\omega(Z(A))=Z(B)$ where $\omega: Z(A)[t]\to Z(B)[s]$ is given as in 
\eqref{E2.1.1}.
\item[(4)] 
We call $A$ {\it strongly m-$Z$-retractable} if, for any algebra $B$ and 
any $n\geq 1$, any equivalence of categories 
$M(A[t_1,\cdots, t_n])\cong M(B[s_1,\cdots,s_n])$ 
implies that $\omega(Z(A))=Z(B)$ where $\omega: Z(A)[t_1,\cdots,t_n]\to 
Z(B)[s_1,\cdots,s_n]$ is given as in \eqref{E2.1.1}.
\end{enumerate}
\end{definition}

The following proposition is similar to \cite[Lemma 2.6]{LWZ}.

\begin{proposition}
\label{xxpro2.7} Let $A$ be an algebra whose center
$Z:=Z(A)$ is an affine domain. Let ${\mathcal P}$ be a stable 
Morita invariant property {\rm{(}}respectively, stable property{\rm{)}}
and assume that the ${\mathcal P}$-discriminant of $A$, denoted by $d$, exists.
\begin{enumerate}
\item[(1)]
Suppose $ML^H_d(Z[t])=Z$. Then
$A$ is m-$Z$-retractable {\rm{(}}respectively, 
$Z$-retractable{\rm{)}}.
\item[(2)]
Suppose that $Z$ is strongly $\LND^H_d$-rigid.
Then $A$ is strongly m-$Z$-retractable {\rm{(}}respectively, strongly 
$Z$-retractable{\rm{)}}. 
\end{enumerate}
\end{proposition}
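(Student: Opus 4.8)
The plan is to reduce both cases (Morita and isomorphism) to a single statement about the induced isomorphism of centers. In the Morita case, applying \eqref{E2.1.1} to the Morita equivalence $M(A[\underline{t}])\cong M(B[\underline{s}])$ and using $Z(A[\underline{t}])=Z[\underline{t}]$, $Z(B[\underline{s}])=Z(B)[\underline{s}]$ produces a ring isomorphism $\omega\colon Z[\underline{t}]\to Z(B)[\underline{s}]$; in the isomorphism case $\omega$ is the restriction to centers of the given $\phi\colon A[\underline{t}]\cong B[\underline{s}]$. Here $\underline{t},\underline{s}$ denotes one variable in part (1) and $n$ variables in part (2). In either case $Z(B)[\underline{s}]\cong Z[\underline{t}]$ is an affine domain, so $Z(B)$ is an affine domain, and the whole proposition amounts to proving $\omega(Z)=Z(B)$.

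First I would show that $\omega$ carries the discriminant $d$ into $Z(B)$. Since $\mathcal{P}$ is $A$-stable and $d$ is the $\mathcal{P}$-discriminant of $A$, we have $I_{\mathcal P}(A[\underline{t}])=I_{\mathcal P}(A)[\underline{t}]=(d)$ inside $Z[\underline{t}]$. By Lemma \ref{xxlem2.5}(2) (when $\mathcal{P}$ is a Morita invariant) or Lemma \ref{xxlem2.5}(1) (for an isomorphism), $\omega$ maps $I_{\mathcal P}(A[\underline{t}])$ onto $I_{\mathcal P}(B[\underline{s}])$, so $(\omega(d))=I_{\mathcal P}(B[\underline{s}])$. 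As $Z(B)$ is affine, $\mathcal{P}$ is $B$-stable, giving $I_{\mathcal P}(B[\underline{s}])=I_{\mathcal P}(B)[\underline{s}]$, an ideal extended from $Z(B)$. If $I_{\mathcal P}(B)=0$ then $\omega(d)=0\in Z(B)$; otherwise, factoring any nonzero $j\in I_{\mathcal P}(B)$ as $j=\omega(d)h$ and comparing $\underline{s}$-degrees (additive over the domain $Z(B)$) forces $\deg_{\underline{s}}\omega(d)=0$, i.e. $\omega(d)\in Z(B)$.

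Next I would use rigidity to obtain the inclusion $\omega(Z)\subseteq Z(B)$. For each $i$ let $\partial^{(i)}$ be the Hasse--Schmidt (divided-power) higher derivation of $Z(B)[\underline{s}]$ in the variable $s_i$; it is locally nilpotent, its kernel is $Z(B)[s_1,\dots,\widehat{s_i},\dots,s_n]$, and it kills $\omega(d)\in Z(B)$, so $\partial^{(i)}\in\LND^H_{\omega(d)}(Z(B)[\underline{s}])$. Conjugating each component by $\omega$ yields a higher derivation $\omega^{-1}\partial^{(i)}\omega\in\LND^H_{d}(Z[\underline{t}])$ (it is locally nilpotent and kills $\omega^{-1}(\omega(d))=d$). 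Hence, by the rigidity hypothesis $ML^H_{d}(Z[\underline{t}])=Z$ (namely $ML^H_d(Z[t])=Z$ in part (1), or strong $\LND^H_d$-rigidity in part (2)),
$$Z=ML^H_{d}(Z[\underline{t}])\subseteq\bigcap_i\ker(\omega^{-1}\partial^{(i)}\omega)=\omega^{-1}\Big(\bigcap_i\ker\partial^{(i)}\Big)=\omega^{-1}(Z(B)),$$
so $\omega(Z)\subseteq Z(B)$. This is the step I expect to be the main obstacle: it is where the $\mathcal{P}$-discriminant and the Makar--Limanov rigidity hypotheses must be matched, and where one must verify that the conjugated coordinate derivations genuinely land in $\LND^H_{d}$ and that their common kernel is exactly $Z(B)$.

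Finally I would upgrade this inclusion to equality by a fraction-field argument valid for any number of variables. Write $R:=\omega(Z)\subseteq S:=Z(B)$ and $W:=Z(B)[\underline{s}]=R[\omega(t_1),\dots,\omega(t_n)]$, so that $W$ is simultaneously a polynomial ring over $R$ and over $S$; thus $\Frac(W)$ is purely transcendental of transcendence degree $n$ over each of $\Frac(R)$ and $\Frac(S)$. Comparing transcendence degrees gives that $\Frac(S)/\Frac(R)$ is algebraic, and since $\Frac(R)$ is algebraically closed in the purely transcendental extension $\Frac(W)=\Frac(R)(\omega(t_1),\dots,\omega(t_n))$, we get $\Frac(R)=\Frac(S)=:F$. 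Because $W$ is a polynomial ring over each of $R$ and $S$, an element of $W$ lies in $F$ exactly when it has degree $0$ in the respective variables, whence $R=W\cap F=S$. Therefore $\omega(Z)=Z(B)$, which is precisely (strong) m-$Z$-retractability, respectively (strong) $Z$-retractability.
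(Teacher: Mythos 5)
Your proof is correct, and its skeleton is the same as the paper's: transport the discriminant ideal via Lemma \ref{xxlem2.5} and stability to get $\omega(d)\in Z(B)$ (the paper phrases this as $\omega(d)=_{Z(B)^{\times}}d'$, the $\mathcal P$-discriminant of $B$, using that units of $Z(B)[\underline{s}]$ lie in $Z(B)$ --- your degree argument is the same computation), then use $\LND^H_d$-rigidity to get $\omega(Z)\subseteq Z(B)$, then upgrade the inclusion to equality. Where you differ is in how the last two steps are justified, and in both places you replace the paper's citations with self-contained arguments. For the inclusion $\omega(Z)\subseteq Z(B)$, the paper writes $\omega(ML^H_d(Z[\underline{t}]))=ML^H_{d'}(Z(B)[\underline{s}])\subseteq Z(B)$ and cites \cite[Example 2.4]{BZ1} for the containment; your conjugation of the coordinate Hasse--Schmidt derivations $\partial^{(i)}$ through $\omega$ is precisely that computation made explicit, and it correctly verifies that the conjugates lie in $\LND^H_d(Z[\underline{t}])$ with common kernel $\omega^{-1}(Z(B))$. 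For the final equality, the paper sets $Z'=\omega^{-1}(Z(B))\supseteq Z$ and invokes a GK-dimension count together with \cite[Lemma 3.2]{BZ1} to conclude $Z'=Z$, whereas you argue with transcendence degrees: since $W=Z(B)[\underline{s}]$ is a polynomial ring over both $R=\omega(Z)$ and $S=Z(B)$, the extension $\Frac(S)/\Frac(R)$ is algebraic, hence trivial because $\Frac(R)$ is algebraically closed in the purely transcendental extension $\Frac(W)$, and then $R=W\cap F=S$. This fraction-field argument is more elementary and self-contained (it avoids GK-dimension entirely), while the paper's route is shorter on the page because it leans on the machinery already established in \cite{BZ1}; both are valid, and your version has the minor additional virtue of making visible exactly where the domain hypothesis on $Z$ is used.
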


\begin{proof} The proofs of (1) and (2) are similar, so we  prove only (2). 
We only work on the strongly m-$Z$-retractable version, the strongly 
$Z$-retractable version is similar.

Suppose that $A[t_1,\cdots,t_n]$ is Morita equivalent to 
$B[s_1,\cdots,s_n]$ for some algebra $B$ and for some $n\geq 1$.
Let $\omega: Z\otimes \Bbbk[\underline{t}]\to Z(B)\otimes \Bbbk 
[\underline{s}]$ be the map given in \eqref{E2.1.1}. Since ${\mathcal P}$
is stable, $d_{\mathcal P}(A[\underline{t}])=d\otimes 1$, where
$1$ is the identity element of the polynomial ring
$\Bbbk[\underline{t}]$. In other words, the principal ideal $(d
\otimes 1)$ is the ${\mathcal P}$-discriminant ideal of 
$A[\underline{t}]$. Since $\omega$ preserves the discriminant 
ideal [Lemma \ref{xxlem2.5}(2c)] and ${\mathcal P}$  
is stable, we obtain that 
\begin{equation}
\label{E2.7.1}\tag{E2.7.1}
\omega((d\otimes 1))=\omega((d)\otimes \Bbbk[\underline{t}])
=\omega(I_{\mathcal P}(A[\underline{t}]))=
I_{\mathcal P}(B[\underline{s}])=
I_{\mathcal P}(B)\otimes \Bbbk[\underline{s}].
\end{equation}
As a consequence, $I_{\mathcal P}(B)$ is a principal ideal, denoted by
$(d')$, where $d'$ is the ${\mathcal P}$-discriminant of $B$. 
Equation \eqref{E2.7.1} implies that 
$$\omega(d\otimes 1)=_{Z(B[\underline{s}])^{\times}} d'\otimes 1',$$ 
where $1'$ is the identity element of the polynomial ring $\Bbbk[\underline{s}]$. 
Since $Z(B)$ is a domain, $Z(B[\underline{s}])^{\times}=Z(B)^{\times}$.
Hence $\omega$ maps $d$ to $d'$ up to a scalar in $Z(B)^{\times}$. 

Now consider the map $\omega: Z\otimes \Bbbk[\underline{t}]\to Z(B)
\otimes \Bbbk[\underline{s}]$ again. Since $\omega$ maps $d$
to $d'$, by the strongly $\LND^H_d$-rigidity of $Z$, we have
$$\omega(Z)=\omega(ML^H_d(Z\otimes \Bbbk[\underline{t}]))
=ML^{H}_{d'}(Z(B)\otimes \Bbbk[\underline{s}])
\subseteq Z(B)$$
where the last $\subseteq$ follows from the computation
given in \cite[Example 2.4]{BZ1}. This means that the isomorphism
$\omega$ induces an algebra map from $Z$ to $Z(B)$. Let $Z'$
be the subalgebra $\omega^{-1}(Z(B))\subset Z[\underline{t}]$. 
Then $Z'$ contains $Z$, which is considered as the degree zero part 
of the algebra $Z[\underline{t}]$, and we have
$$\GKdim Z'=\GKdim Z(B)=\GKdim Z(B)[\underline{s}]-n
=\GKdim Z[\underline{t}]-n=\GKdim Z.$$
By \cite[Lemma 3.2]{BZ1}, $Z'=Z$. Therefore $\omega(Z)=Z(B)$ as 
required.
\end{proof}

The rest of this section follows closely \cite[Section 2]{LWZ}.
By \cite[Section 5]{BZ1}, effectiveness (and the dominating property)
of the discriminant controls $\LND^H$-rigidity. We now recall the 
definition of the effectiveness of an element.
An algebra is called {\it PI} if it satisfies a polynomial identity.

Next we will use filtered algebras and associated graded algebras, 
see \cite[Section 1]{YZ2} for more details. By a filtration of a 
$\Bbbk$-algebra $A$ we mean an ascending filtration $F := 
\{F_i A\}_{i\geq 0}$ of vector spaces such that $1\in F_0 A$ and 
$F_i A F_j A\subseteq F_{i+j}A$ for all $i,j\geq 0$. We assume 
that $F$ is (separated and) exhaustive. By \cite[Lemma 1.1]{YZ2}, 
giving a filtration on an algebra $A$ is equivalent to giving 
a degree on the set of generators of $A$. 

\begin{definition} \cite[Definition 5.1]{BZ1}
\label{xxdef2.8}
Let $A$ be a domain and suppose that $Y=\bigoplus_{i=1}^n \Bbbk x_i$
generates $A$ as an algebra.
An element $0 \neq f\in A$ is called \emph{effective} if the following
conditions hold.
\begin{enumerate}
\item[(1)]
There is an ${\mathbb N}$-filtration $\{F_i A\}_{i\geq 0}$ on $A$
such that the associated graded ring $\gr A$ is a domain
(one possible filtration is the trivial filtration $F_0 A=A$).
With this filtration we define the degree of elements in $A$, denoted
by $\deg_A$.
\item[(2)]
For every testing ${\mathbb N}$-filtered PI algebra $T$ with
$\gr T$ being an ${\mathbb N}$-graded domain and for every
testing subset $\{y_1,\ldots,y_{n}\}\subset T$ satisfying
\begin{enumerate}
\item[(a)]
it is linearly independent in the quotient $\Bbbk$-module $T/\Bbbk 1_T$, and
\item[(b)]
$\deg_T y_i\geq \deg_A x_i$ for all $i$ and $\deg_T y_{i_0}>\deg_A x_{i_0}$
for some $i_0$,
\end{enumerate}
then there is a presentation of $f$ of the form
$f(x_1,\ldots,x_{n})$ in the free algebra $\Bbbk\langle x_1,\ldots,x_{n}
\rangle$, such that either $f(y_1,\ldots,y_n)=0$ or
$\deg_{T} f(y_1,\ldots,y_n)>\deg_A f$.
\end{enumerate}
\end{definition}

Here is an easy example.

\begin{example} \cite[Example 2.8]{LWZ}
\label{xxex2.9}
Every non-invertible nonzero element in $\Bbbk[t]$ is effective in $\Bbbk[t]$.
\end{example}

Other examples of effective elements are given in
\cite[Section 5]{BZ1}.
There is another concept, called ``dominating'', see 
\cite[Definition 4.5]{BZ1} or \cite[Definition 2.1(2)]{CPWZ1}, 
that is similar to effectiveness. Both of these properties control 
$\LND^H$-rigidity. The following result is similar to 
\cite[Theorem 5.2]{BZ1} and \cite[Theorem 2.9]{LWZ}.

\begin{theorem}
\label{xxthm2.10} If $d$ is an effective {\rm{(}}respectively,
dominating{\rm{)}} element in an affine 
commutative domain $Z$, then $Z$ is strongly $\LND^H_d$-rigid.
\end{theorem}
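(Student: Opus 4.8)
The plan is to prove that effectiveness of $d$ forces $d$ into the kernel of every locally nilpotent higher derivation on every polynomial extension $Z[t_1,\dots,t_n]$, which is exactly the assertion $ML^H_d(Z[\underline{t}])=Z$ for all $n$. Since $Z$ is an affine commutative domain and $d\in Z$ is effective, I would first fix the given $\mathbb{N}$-filtration on $Z$ (from Definition~\ref{xxdef2.8}(1)) with $\gr Z$ a domain, and extend it to a filtration on $Z[\underline{t}]$ by assigning each $t_j$ a degree, so that $\gr(Z[\underline{t}])$ is again a graded domain. The central observation is that the ambient algebra $Z[\underline{t}]$ is itself commutative (hence PI) with graded domain associated ring, so it qualifies as a legitimate testing algebra in the sense of Definition~\ref{xxdef2.8}(2).

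The main engine is the standard argument relating higher derivations to degree growth. Let $\partial=\{\partial_i\}$ be any locally nilpotent higher derivation on $Z[\underline{t}]$, and suppose for contradiction that $d\notin\ker\partial$, i.e. $\partial_m(d)\neq 0$ for some $m\geq 1$. The associated automorphism $G_{\partial,t}$ of $Z[\underline{t}][t]$ (adjoining a fresh variable) sends each generator $x_i$ to $y_i:=\sum_i \partial_i(x_i)t^i$, and I would set up the degree bookkeeping so that the images $y_i$ are linearly independent mod scalars and satisfy the degree inequalities in Definition~\ref{xxdef2.8}(2)(b): strictly larger for at least one generator and at least as large for the rest, with respect to a filtration on the testing algebra $T:=Z[\underline{t}][t]$ that weights the new variable $t$ heavily. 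Effectiveness then yields, for the chosen presentation of $d$, that either $d(y_1,\dots,y_n)=0$ or $\deg_T d(y_1,\dots,y_n)>\deg_Z d$. But $d(y_1,\dots,y_n)=G_{\partial,t}(d)=\sum_i \partial_i(d)t^i$, and comparing the top-degree term (which records $\partial_m(d)\neq 0$) against the degree of $d$ produces a contradiction in both cases: it cannot vanish since $\partial_0(d)=d\neq 0$, and its degree is pinned down by the leading term in a way incompatible with the strict inequality. This forces $\partial_i(d)=0$ for all $i\geq 1$, so $d\in\ker\partial$.

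Since $\partial$ was an arbitrary element of $\LND^H(Z[\underline{t}])$, every such higher derivation has $d$ in its kernel, so $\LND^H_d(Z[\underline{t}])=\LND^H(Z[\underline{t}])$, and consequently
\[
ML^H_d(Z[\underline{t}])=\bigcap_{\delta\in\LND^H(Z[\underline{t}])}\ker(\delta).
\]
The final step is to show this intersection equals $Z$. The inclusion $Z\subseteq ML^H_d(Z[\underline{t}])$ is automatic once we exhibit enough higher derivations killing exactly $Z$; the reverse inclusion follows because the partial-derivative higher derivations $\partial/\partial t_j$ (which are locally nilpotent and annihilate $Z$) collectively have intersection of kernels equal to $Z$. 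This establishes $ML^H_d(Z[\underline{t}])=Z$ for every $n\geq 1$, which is precisely strong $\LND^H_d$-rigidity. The dominating case runs identically, substituting the degree estimate from \cite[Definition~4.5]{BZ1} for the effectiveness estimate.

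The hard part will be the careful setup of the testing filtration on $T=Z[\underline{t}][t]$ and verifying that the automorphism images $y_i$ genuinely satisfy both hypotheses (a) linear independence modulo $\Bbbk 1_T$ and (b) the strict-versus-weak degree inequalities, simultaneously for the correct choice of presentation of $d$; this is where one must exploit local nilpotency to control the finitely many nonzero $\partial_i(x_j)$ and pick the weight on $t$ large enough. Once the degree comparison is correctly calibrated so that $\deg_T G_{\partial,t}(d)$ is forced to record the would-be nonzero $\partial_m(d)$, the contradiction with effectiveness is immediate, and the rest is bookkeeping parallel to \cite[Theorem~5.2]{BZ1} and \cite[Theorem~2.9]{LWZ}.
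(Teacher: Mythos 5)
Your proposal proves the wrong statement. Strong $\LND^H_d$-rigidity does not assert that every locally nilpotent higher derivation of $Z[t_1,\dots,t_n]$ kills $d$; it asserts that the common kernel of those higher derivations that \emph{already} kill $d$ (the set $\LND^H_d(Z[t_1,\dots,t_n])$ of \eqref{E1.6.1}) is exactly $Z$. Your opening claim --- that effectiveness forces $d\in\ker\partial$ for \emph{every} $\partial\in\LND^H(Z[t_1,\dots,t_n])$ --- is false: take $Z=\Bbbk[x]$ and $d=x$, which is effective by Example \ref{xxex2.9}, and let $\partial$ be the Hasse--Schmidt derivation on $\Bbbk[x][t_1]$ with $\partial_i(x^n)=\binom{n}{i}x^{n-i}$ and $\partial_i(t_1)=0$ for $i\geq 1$, so that $G_{\partial,t}\colon x\mapsto x+t,\ t_1\mapsto t_1$; this is locally nilpotent, yet $\partial_1(x)=1\neq 0$. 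Correspondingly, your attempted contradiction never materializes: if $m\geq 1$ is maximal with $\partial_m(d)\neq 0$, then $G_{\partial,t}(d)=\sum_i\partial_i(d)t^i$ is nonzero of large $T$-degree, which is perfectly \emph{consistent} with the effectiveness alternative ``$f(y_1,\dots,y_n)=0$ or $\deg_T f(y_1,\dots,y_n)>\deg_Z d$''; nothing is violated. The contradiction in the paper's proof comes precisely from the hypothesis you discarded: for $\partial\in\LND^H_d$ one has $G_{\partial,t}(d)=d$, hence $\deg_T G_{\partial,t}(d)=\deg_Z d$, and \emph{this} contradicts the strict inequality. The paper therefore fixes $\partial\in\LND^H_d(Z[t_1,\dots,t_n])$, applies effectiveness to the images $y_j=G_{\partial,t}(x_j)$ of the \emph{generators} $x_j$ of $Z$ (not to $d$), and concludes $G_{\partial,t}(x_j)=x_j$ for all $j$, i.e.\ $Z\subseteq\ker\partial$.

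The endgame of your proposal is also inverted. Even granting your step 1, you would only get $ML^H_d(Z[t_1,\dots,t_n])=\bigcap_{\delta\in\LND^H}\ker\delta$, and the inclusion $Z\subseteq ML^H_d$ would then require \emph{every} locally nilpotent higher derivation to annihilate all of $Z$ --- refuted by the same example above. Your remark that this inclusion is ``automatic once we exhibit enough higher derivations killing exactly $Z$'' has the logic of intersections backwards: exhibiting particular derivations such as the $\partial/\partial t_j$ (this is \cite[Example 2.4]{BZ1}, which the paper invokes) only bounds the invariant from above, giving $ML^H_d(Z[t_1,\dots,t_n])\subseteq Z$, which is the easy direction. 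The hard direction, $Z\subseteq ML^H_d(Z[t_1,\dots,t_n])$, says that each individual $\partial\in\LND^H_d$ kills all of $Z$, and that is exactly what the effectiveness argument run with $G_{\partial,t}(d)=d$ is designed to prove; without restricting to derivations that kill $d$, the statement is simply not true.
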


\begin{proof}
Since the proofs for the ``effective'' case and the ``dominating''
case are very similar, we  prove only the ``effective'' case. 

Suppose $Z$ is generated by $\{x_j\}_{j=1}^{m}$.
Let $\partial\in \LND^H_d(Z[t_1,\cdots,t_n])$
and $G:=G_{\partial, t}\in \Aut_{\Bbbk [t]}(Z[t_1,\cdots,t_n][t])$ 
as in Definition \ref{xxdef1.6}(2). Then, for each $j$,
$$G(x_j)=x_j+\sum_{i\geq 1} t^i \partial_i(x_j).$$
Since $d \in \ker \partial$, by definition, 
\begin{equation}
\label{E2.10.1}\tag{E2.10.1}
G(d)=d.
\end{equation}
Recall from Definition \ref{xxdef2.8} that, when $d$ is effective, 
$Z$ is a filtered algebra with $\deg_Z$ is defined as in 
\cite[Lemma 1.1]{YZ2}. It is clear that $Z':=Z[t_1,\cdots,t_n]$ 
is a filtered algebra with $\deg_{Z'} z=\deg_Z z$ for all $z\in Z$ 
and $\deg_{Z'} t_s=1$ for $s=1,\cdots,n$. We take the test algebra $T$ 
to be $Z[t_1,\cdots,t_n][t]=Z'[t]$ where the filtration on $T$ is 
determined by $\deg_T(z)=\deg_Z(z)$ for all $z\in Z$, $\deg_T t_s=1$ 
for $s=1,\cdots, n$, and $\deg_T t=\alpha$, where 
$\alpha > \sup \{\deg_{Z'} \partial_i(x_j) \mid j=1,\cdots,m,i\geq 0\}$. 

Now set $y_j=G(x_j)\in T$. By the choice of $\alpha$,
we have that
\begin{enumerate}
\item[(a)]
$\deg_T y_j\geq \deg_Z x_j$, and that
\item[(b)]
$\deg_T y_j=\deg_Z x_j$ if and only if $y_j=x_j$.
\end{enumerate}
Let $f(x_1,\cdots,x_m)$ be some polynomial presentation of $d$
as in Definition \ref{xxdef2.8}. If $G(x_j)\neq x_j$ for some $j$,
by the effectiveness  of $d$ as in Definition \ref{xxdef2.8},  
$f(y_1,\cdots,y_m)=0$ or
$\deg_T f(y_1,\cdots,y_m)>\deg_Z d=\deg_T d$. So $f(y_1,\cdots,y_m)
\neq_{Z^\times} d$. But $f(y_1,\cdots,y_m)=G(d)=_{Z^\times} d$
by \eqref{E2.10.1}, a contradiction. Therefore
$G(x_j)=x_j$ for all $j$. As a consequence,
$\partial_i(x_j)=0$ for all $i\geq 1$, or equivalently, 
$x_j\in \ker \partial$.
Since $Z$ is generated by $x_j$'s, $Z\subset
\ker\partial$. Thus $Z\subseteq ML^H_d(Z[t_1,\cdots,t_n])$.
It is clear that $Z\supseteq ML^H_d(Z[t_1,\cdots,t_n])$,
see \cite[Example 2.4]{BZ1}. Therefore
$Z=ML^H_d(Z[t_1,\cdots,t_n])$ as required.
\end{proof}

The following corollary will be used several times.

\begin{corollary}
\label{xxcor2.11}
Let $A$ be an algebra such that the center of $A$ is $\Bbbk[x]$. 
Let ${\mathcal P}$ be a stable Morita invariant property 
{\rm{(}}respectively, stable property{\rm{)}} such that
the ${\mathcal P}$-discriminant of $A$, denoted by $d$, is a nonzero
non-invertible element in $Z(A)=\Bbbk[x]$. Then $Z(A)$ is strongly 
$\LND^H_d$-rigid and $A$ is strongly m-$Z$-retractable 
{\rm{(}}respectively, strongly $Z$-retractable{\rm{)}}.
\end{corollary}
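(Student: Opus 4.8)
The plan is to obtain this corollary by chaining together three results already established in this section: Example \ref{xxex2.9}, Theorem \ref{xxthm2.10}, and Proposition \ref{xxpro2.7}. The first assertion (strong $\LND^H_d$-rigidity of $Z(A)$) reduces to verifying that $d$ is effective, after which the second assertion (strong m-$Z$-retractability, respectively strong $Z$-retractability) follows immediately.

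First I would verify that $d$ is an effective element of $Z(A)=\Bbbk[x]$. By hypothesis $d$ is a nonzero non-invertible element of $\Bbbk[x]$, so Example \ref{xxex2.9} applies verbatim (after renaming the indeterminate $t$ to $x$) and shows that $d$ is effective. Feeding this into Theorem \ref{xxthm2.10}, applied to the affine commutative domain $Z=\Bbbk[x]$, yields that $\Bbbk[x]$ is strongly $\LND^H_d$-rigid; that is, $ML^H_d(\Bbbk[x][t_1,\dots,t_n])=\Bbbk[x]$ for all $n\geq 1$. This is exactly the first claim.

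For the second claim I would invoke Proposition \ref{xxpro2.7}(2). Its standing hypotheses are all satisfied: $Z(A)=\Bbbk[x]$ is an affine domain, $\mathcal{P}$ is a stable Morita invariant (respectively, stable) property, and the $\mathcal{P}$-discriminant $d$ of $A$ exists by assumption. The additional hypothesis of part (2)—that $Z$ be strongly $\LND^H_d$-rigid—is precisely what was just established. Hence Proposition \ref{xxpro2.7}(2) gives that $A$ is strongly m-$Z$-retractable (respectively, strongly $Z$-retractable), as required.

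Since every link in this chain has already been proved, there is no genuine obstacle here; the corollary is a specialization of the general machinery to the one-variable center. The only point deserving care is the bookkeeping of hypotheses: checking that ``nonzero and non-invertible in $\Bbbk[x]$'' is exactly the condition Example \ref{xxex2.9} requires in order to conclude effectiveness, and carrying the ``respectively'' bracketing consistently through Theorem \ref{xxthm2.10} and Proposition \ref{xxpro2.7} so that the Morita and non-Morita versions are treated in parallel.
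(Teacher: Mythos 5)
Your proposal is correct and follows exactly the paper's own proof: Example \ref{xxex2.9} gives effectiveness of $d$, Theorem \ref{xxthm2.10} then gives strong $\LND^H_d$-rigidity of $Z(A)$, and Proposition \ref{xxpro2.7}(2) yields the (m-)$Z$-retractability, with the ``respectively'' bracketing handled the same way. No gaps; this matches the paper's argument step for step.
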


\begin{proof} By Example \ref{xxex2.9}, $d$ is an effective element
in $Z(A)$. By Theorem \ref{xxthm2.10}, $Z(A)$ is strongly $\LND^H_d$-rigid.
By Proposition \ref{xxpro2.7}(2), $A$ is strongly m-$Z$-retractable
{\rm{(}}respectively, strongly $Z$-retractable{\rm{)}}. 
\end{proof}

\section{Morita Detectability}
\label{xxsec3}

First we recall the detectability introduced in \cite{LWZ}.
If $B$ is a subring of $C$ and $f_1,\dots,f_m$ are elements of $C$, then the
subring generated by $B$ and the subset $\{f_1,\dots,f_m\}$ is denoted by 
$B\{f_1,\dots,f_m\}$.

\begin{definition} \cite[Definition 3.1]{LWZ}
\label{xxdef3.1} Let $A$ be an algebra.
\begin{enumerate}
\item[(1)]
We call $A$ {\it detectable}
if any algebra isomorphism $\phi: A[t] \cong B[s]$ for some algebra $B$ implies 
that $B[s]=B\{\phi(t)\}$, or equivalently, $s\in B\{\phi(t)\}$.
\item[(2)]
We call $A$ {\it strongly detectable} if for each
integer $n\geq 1$ and any algebra isomorphism
$$\phi: A[t_1,\dots, t_n] \cong B[s_1,\dots, s_n]$$
 for some algebra $B$ 
implies that $B[s_1,\dots,
s_n]=B\{\phi(t_1),\dots,\phi(t_n)\}$, or equivalently, for each 
$i=1,\cdots, n$, $s_i\in B\{\phi(t_1),\dots,\phi(t_n)\}$.
\end{enumerate}
\end{definition}

In the above definition, we do not assume that $\phi(t)=s$.
Every strongly detectable algebra is detectable.
The polynomial ring $\Bbbk[x]$ is cancellative, but not
detectable. By \cite[Lemma 3.2]{LWZ}, if $A$ is $Z$-retractable 
in the sense of \cite[Definition 2.5]{LWZ}, then it is detectable. 
We first recall a definition from \cite[Definition 3.4]{LWZ}.

\begin{definition} \cite[Definition 3.4]{LWZ}
\label{xxdef3.2}
Let $A$ be an algebra over $\Bbbk$.
\begin{enumerate}
\item[(1)]
We say $A$ is {\it Hopfian} if every $\Bbbk$-algebra 
epimorphism from $A$ to itself
is an automorphism.
\item[(2)]
We say $A$ is {\it strongly Hopfian} if $A[t_1,\cdots, t_n]$ 
is Hopfian for every $n\geq 0$.
\end{enumerate}
\end{definition}

By \cite[Lemma 3.6]{LWZ}, if $A$ is detectable and strongly Hopfian,
then $A$ is cancellative. We will generalize these facts in the 
Morita setting. In the following definition, we use $\omega^{-1}$ 
instead of $\omega$ for some technical reasons.

\begin{definition} 
\label{xxdef3.3} Let $A$ be an algebra. Let $\omega$ be the 
map given in \eqref{E2.1.1} when in a Morita context. 
\begin{enumerate}
\item[(1)]
We call $A$ {\it m-detectable} if any equivalence of categories
$M(A[t]) \cong M(B[s])$ for some algebra $B$ implies that
$$A[t]=A\{\omega^{-1}(s)\},$$ 
or equivalently, $t\in A\{\omega^{-1}(s)\}$.
\item[(2)]
We call $A$ {\it strongly m-detectable} if for each  $n\geq 1$ and
any equivalence of categories $M(A[t_1,\cdots,t_n]) \cong M(B[s_1,\cdots,s_n])$
for some algebra $B$ implies that
$$A[t_1,\cdots,t_n]=A\{\omega^{-1}(s_1),\cdots,\omega^{-1}(s_n)\},$$ 
or equivalently, $t_i\in A\{\omega^{-1}(s_1),\cdots,\omega^{-1}(s_n)\}$ for $i
=1,\cdots,n$.
\end{enumerate}
\end{definition}

The following result is analogous to \cite[Lemma 3.2]{LWZ}.

\begin{lemma}
\label{xxlem3.4} If $A$ is m-$Z$-retractable {\rm{(}}respectively, strongly
m-$Z$-retractable{\rm{)}}, then it is m-detectable {\rm{(}}respectively, strongly
m-detectable{\rm{)}}.
\end{lemma}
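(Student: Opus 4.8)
The plan is to mirror the commutative argument from \cite[Lemma 3.2]{LWZ}, adapting it to the Morita setting where the relevant isomorphism is $\omega$ rather than the original algebra map $\phi$. Recall that m-$Z$-retractability says that whenever $M(A[t])\cong M(B[s])$, the induced central isomorphism $\omega\colon Z(A)[t]\to Z(B)[s]$ of \eqref{E2.1.1} satisfies $\omega(Z(A))=Z(B)$. The goal is to deduce m-detectability, namely that $t\in A\{\omega^{-1}(s)\}$.

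First I would set up the notation carefully. Suppose $M(A[t])\cong M(B[s])$ via an invertible bimodule $\Omega$, and let $\omega\colon Z(A[t])\to Z(B[s])$ be the center isomorphism from Lemma \ref{xxlem2.1}(3). Observe that $Z(A[t])=Z(A)[t]$ and $Z(B[s])=Z(B)[s]$, since adjoining a central indeterminate adds it to the center. Now $s$ is a central element of $B[s]$, so $\omega^{-1}(s)$ is a central element of $A[t]$, i.e. $\omega^{-1}(s)\in Z(A)[t]$. Write $\omega^{-1}(s)=\sum_{i=0}^N z_i t^i$ with $z_i\in Z(A)$. By m-$Z$-retractability applied in reverse (using that $\omega^{-1}(Z(B))=Z(A)$, equivalently $\omega(Z(A))=Z(B)$), the element $s$ cannot lie in $Z(B)$ alone; more precisely $\omega$ carries $Z(A)$ onto $Z(B)$, so the ``$t$-degree'' of $\omega^{-1}(s)$ must be positive — otherwise $\omega^{-1}(s)\in Z(A)$ would force $s=\omega(\omega^{-1}(s))\in\omega(Z(A))=Z(B)$, contradicting that $s$ is a genuine polynomial variable transcendental over $B$.

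The key step is then a degree/leading-coefficient argument to show that $A\{\omega^{-1}(s)\}$ already contains $t$. Since $\omega(Z(A))=Z(B)$ and $\omega$ restricts to an isomorphism $Z(A)[t]\to Z(B)[s]$ sending $Z(A)$ onto $Z(B)$, the induced map on the quotients $Z(A)[t]/Z(A)\to Z(B)[s]/Z(B)$ is an isomorphism of $Z(A)$-modules (after the identification $Z(A)\cong Z(B)$), and comparing the single generating degrees forces $\omega^{-1}(s)=z_0+z_1 t$ with $z_1\in Z(A)^{\times}$ a unit and $z_0\in Z(A)$. Granting this, $t=z_1^{-1}(\omega^{-1}(s)-z_0)\in A\{\omega^{-1}(s)\}$, which is exactly m-detectability. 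For the strong version, I would run the same argument simultaneously for $\omega^{-1}(s_1),\dots,\omega^{-1}(s_n)$: strong m-$Z$-retractability gives $\omega(Z(A))=Z(B)$ for $M(A[t_1,\dots,t_n])\cong M(B[s_1,\dots,s_n])$, and since $\{s_1,\dots,s_n\}$ freely generate $Z(B)[\underline{s}]$ over $Z(B)$ while $\{t_1,\dots,t_n\}$ do so over $Z(A)$, the isomorphism $\omega$ must send the filtration by total degree isomorphically, yielding that each $t_i$ is recovered as a $Z(A)$-linear combination of $1,\omega^{-1}(s_1),\dots,\omega^{-1}(s_n)$, hence lies in $A\{\omega^{-1}(s_1),\dots,\omega^{-1}(s_n)\}$.

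The main obstacle I anticipate is justifying that the linear part of $\omega^{-1}(s)$ has a unit coefficient (in the one-variable case) and, more delicately in the strong case, that the Jacobian-type matrix relating the $t_i$ to the $\omega^{-1}(s_j)$ is invertible over $Z(A)$. This is the place where one genuinely uses that $\omega$ is an isomorphism of \emph{polynomial} rings $Z(A)[\underline{t}]\cong Z(B)[\underline{s}]$ restricting to $Z(A)\cong Z(B)$ on the coefficient level: such an isomorphism must be ``affine-linear and triangular'' in the variables up to the coefficient identification, so the transition matrix is invertible. I expect this to follow by comparing the associated graded rings for the total-degree filtration, exactly as in the proof of \cite[Lemma 3.2]{LWZ}, with $\omega$ playing the role that $\phi$ played there; the only new ingredient is the bookkeeping that replaces $\phi$ by $\omega$ and $B\{\phi(t)\}$ by $A\{\omega^{-1}(s)\}$, so that everything is phrased on the $A$-side rather than the $B$-side.
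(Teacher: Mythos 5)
Your argument breaks precisely at the structure claims you flag as the ``main obstacle,'' and those claims are false. In one variable, an isomorphism $Z(A)[t]\to Z(B)[s]$ carrying $Z(A)$ onto $Z(B)$ need \emph{not} send $s$ back to an affine-linear polynomial with unit leading coefficient unless $Z(A)$ is reduced: for $Z=\Bbbk[\epsilon]/(\epsilon^2)$ the map $t\mapsto t+\epsilon t^2$ is a $Z$-algebra automorphism of $Z[t]$ fixing $Z$, and Lemma \ref{xxlem3.4} is stated with no hypothesis whatsoever on $Z(A)$, so you cannot assume reducedness. Worse, in the strong case your claim that each $t_i$ is a $Z(A)$-\emph{linear} combination of $1,\omega^{-1}(s_1),\dots,\omega^{-1}(s_n)$ fails even when the center is a field: the triangular automorphism $t_1\mapsto t_1$, $t_2\mapsto t_2+t_1^2$ of $\Bbbk[t_1,t_2]$ fixes $\Bbbk$, yet $t_2$ is not in the affine span of $1,\,t_1,\,t_2+t_1^2$. (You yourself invoke ``triangular'' isomorphisms; triangular automorphisms are exactly the counterexamples to affine-linearity, so the intended Jacobian-type invertibility is not available.) Comparing associated graded rings cannot rescue this, because the statement being forced is simply untrue.

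The point you are missing is that m-detectability only requires $t_i$ to lie in the \emph{subring} $A\{\omega^{-1}(s_1),\dots,\omega^{-1}(s_n)\}$, not in any $Z(A)$-linear span, so no structure theory of polynomial automorphisms is needed at all; the paper's proof is essentially one line. Write $\phi=\omega^{-1}$ and $f_i=\phi(s_i)$. By (strong) m-$Z$-retractability, $\phi(Z(B))=Z(A)$, and since $\phi$ is an algebra isomorphism onto $Z(A)[\underline{t}]$,
$$Z(A)\{f_1,\dots,f_n\}=\phi(Z(B))\{\phi(s_1),\dots,\phi(s_n)\}
=\phi\bigl(Z(B)\{s_1,\dots,s_n\}\bigr)=\phi\bigl(Z(B)[\underline{s}]\bigr)=Z(A)[\underline{t}],$$
whence every $t_i$ lies in $Z(A)\{f_1,\dots,f_n\}\subseteq A\{f_1,\dots,f_n\}$, which is exactly (strong) m-detectability. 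Note how this handles the triangular example correctly: $t_2=(t_2+t_1^2)-t_1\cdot t_1$ lies in the generated subring even though it is not an affine-linear combination of the generators.
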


\begin{proof} We  show only the ``strongly'' version.

Suppose that $A$ is strongly m-$Z$-retractable. Let $B$ be any algebra 
such that the abelian categories $M(A[\underline{t}])$ and $M(B[\underline{s}])$ 
are equivalent. Since $A$ is strongly m-$Z$-retractable, the map 
$\omega: Z(A)[\underline{t}]\to Z(B)[\underline{s}]$ in \eqref{E2.1.1} restricts 
to an algebra isomorphism $Z(A)\to Z(B)$. Write
$\phi=\omega^{-1}$ and $f_i:=\phi(s_i)$ for $i=1,\dots,n$. Then
$$\begin{aligned}
Z(A)\{f_1,\cdots,f_n\}&=\phi(Z(B))\{\phi(s_1),\dots, \phi(s_n)\}\\
&=\phi(Z(B)\{s_1,\dots,s_n\})\\ 
&=\phi(Z(B)[\underline{s}])\\
&= Z(A)[\underline{t}].
\end{aligned}
$$
Then, for every $i$, $t_i\in Z(A)[\underline{t}]= Z(A)\{f_1,\cdots,f_n\} \subseteq
A\{f_1,\dots,f_n\}$ as desired.
\end{proof}

Next we show that m-detectability implies m-cancellative property 
under some mild conditions. 

\begin{example} \cite[Lemma 3.5]{LWZ}
\label{xxex3.5}
The following algebras are strongly Hopfian.
\begin{enumerate}
\item[(1)]
Left or right noetherian algebras.
\item[(2)]
Finitely generated locally finite ${\mathbb N}$-graded algebras.
\item[(3)]
Prime affine $\Bbbk$-algebras satisfying a polynomial identity.
\end{enumerate}
\end{example}

\begin{lemma}
\label{xxlem3.6} Suppose $A$ is strongly Hopfian.
\begin{enumerate}
\item[(1)]
If $A$ is m-detectable, then $A$ is m-cancellative 
 and  cancellative.
\item[(2)]
If $A$ is strongly m-detectable, then $A$ is strongly m-cancellative
and strongly cancellative.
\end{enumerate}
\end{lemma}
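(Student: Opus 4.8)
The plan is to deduce all four conclusions from a single structural fact obtained by feeding m-detectability into the strong Hopfian hypothesis. I will run the argument for the strong version in part (2); part (1) is the case $n=1$ verbatim. Suppose $M(A[\underline{t}])\cong M(B[\underline{s}])$ with associated center isomorphism $\omega\colon Z(A)[\underline{t}]\to Z(B)[\underline{s}]$ as in \eqref{E2.1.1}, and set $f_i:=\omega^{-1}(s_i)$. Since each $s_i$ is central in $B[\underline{s}]$, each $f_i$ lies in $Z(A)[\underline{t}]=Z(A[\underline{t}])$, so the $f_i$ are central and pairwise commuting in $A[\underline{t}]$, and hence $A\{f_1,\dots,f_n\}=A[f_1,\dots,f_n]$. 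Strong m-detectability gives $A[\underline{t}]=A\{f_1,\dots,f_n\}$.

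The key step, and the main obstacle, is to upgrade this generation statement to the assertion that the $f_i$ form an honest set of polynomial variables, i.e.\ that $A[\underline{t}]/(f_1,\dots,f_n)A[\underline{t}]\cong A$. This is where strong Hopfian enters. Introduce a fresh polynomial ring $A[\underline{u}]=A[u_1,\dots,u_n]$ and define $\Phi\colon A[\underline{u}]\to A[\underline{t}]$ by $\Phi|_A=\mathrm{id}$ and $\Phi(u_i)=f_i$; this is a well-defined algebra homomorphism because the $f_i$ are central, and it is surjective precisely because $A[\underline{t}]=A\{f_1,\dots,f_n\}$. Composing with the relabeling isomorphism $\rho\colon A[\underline{t}]\to A[\underline{u}]$, $t_i\mapsto u_i$, yields a surjective algebra endomorphism $\rho\circ\Phi$ of $A[\underline{u}]$. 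Since $A$ is strongly Hopfian, $A[\underline{u}]$ is Hopfian, so $\rho\circ\Phi$ is an automorphism; hence $\Phi$ is an isomorphism. In particular $\Phi$ carries $(u_1,\dots,u_n)A[\underline{u}]$ onto $(f_1,\dots,f_n)A[\underline{t}]$ and induces an algebra isomorphism $A\cong A[\underline{u}]/(u_1,\dots,u_n)A[\underline{u}]\xrightarrow{\ \sim\ }A[\underline{t}]/(f_1,\dots,f_n)A[\underline{t}]$.

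With this in hand both cancellation statements follow immediately. For the \emph{strongly m-cancellative} conclusion, apply Lemma \ref{xxlem2.1}(5) to the ideal $I=(f_1,\dots,f_n)$ of $Z(A[\underline{t}])$: since $\omega(I)=(s_1,\dots,s_n)$, the algebra $A[\underline{t}]/I\,A[\underline{t}]$ is Morita equivalent to $B[\underline{s}]/(s_1,\dots,s_n)B[\underline{s}]=B$, and combining with the isomorphism above gives $M(A)\cong M(B)$. For the \emph{strongly cancellative} conclusion, one starts instead from an algebra isomorphism $\phi\colon A[\underline{t}]\cong B[\underline{s}]$; the induced Morita equivalence has center map $\omega=\phi|_{Z(A[\underline{t}])}$, so $f_i=\phi^{-1}(s_i)$ and $\phi$ itself carries $(f_1,\dots,f_n)A[\underline{t}]$ onto $(s_1,\dots,s_n)B[\underline{s}]$. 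Hence $\phi$ induces an algebra isomorphism $A[\underline{t}]/(f_1,\dots,f_n)A[\underline{t}]\cong B[\underline{s}]/(s_1,\dots,s_n)B[\underline{s}]=B$, which together with the isomorphism above yields $A\cong B$.

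The only delicate point is the middle paragraph: a priori $f_i=\omega^{-1}(s_i)$ is merely some central element set generating $A[\underline{t}]$ over $A$, and without further input one cannot exclude that the $f_i$ are entangled central polynomials of higher degree, so that reduction modulo them need not return $A$. The strong Hopfian hypothesis is exactly what forces the canonical surjection $\Phi$ to be injective, and hence the $f_i$ to behave as genuine variables; I expect this to be the crux of the write-up, while the surrounding reductions via \eqref{E2.1.1} and Lemma \ref{xxlem2.1}(5) are routine.
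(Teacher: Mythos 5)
Your proposal is correct and follows essentially the same route as the paper's proof: use (strong) m-detectability to get that the $f_i=\omega^{-1}(s_i)$ generate $A[\underline{t}]$ over $A$, feed the resulting surjective endomorphism of the polynomial ring into the (strong) Hopfian hypothesis to conclude the $f_i$ are genuine central indeterminates, then quotient by $(f_1,\dots,f_n)$ and invoke Lemma \ref{xxlem2.1}(5) for the Morita statement and the induced isomorphism on quotients for the cancellative statement. Your use of a fresh polynomial ring $A[\underline{u}]$ with a relabeling isomorphism is only a cosmetic repackaging of the paper's composite $A[\underline{t}]\xrightarrow{\pi}A\{f_1,\dots,f_n\}=A[\underline{t}]$.
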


\begin{proof} We  prove only (2).

First we consider the Morita version. Suppose that $A[\underline{t}]$ 
and $B[\underline{s}]$ are Morita equivalent
and $\omega: Z(A)[\underline{t}]\to Z(B)[\underline{s}]$ is the algebra 
isomorphism given as in \eqref{E2.1.1}. Write $\phi=\omega^{-1}$ and
$f_i=\phi(s_i)$ for $i=1,\cdots, n$. Then $f_i$ are central elements
in $A[\underline{t}]$. Thus $A\{f_1,\cdots,f_n\}$ is a homomorphic image
of $A[t_1,\cdots,t_n]$ by sending $t_i\mapsto f_i$. Since $A$ is strongly
m-detectable. Then $A\{f_1,\cdots, f_n\}=A[\underline{t}]$.
Then we have an algebra homomorphism
\begin{equation}
\label{E3.6.1}\tag{E3.6.1}
A[\underline{t}]\xrightarrow{\pi} A\{f_1,\cdots,f_n\}\xrightarrow{=} 
A[\underline{t}].
\end{equation}
Since $A$ is strongly Hopfian, $A[\underline{t}]$ is Hopfian. Now 
\eqref{E3.6.1} implies that $\pi$ is an isomorphism.
As a consequence, $A\{f_1,\cdots, f_n\}=A[f_1,\cdots,f_n]$ viewing $f_i$ as central
indeterminates in $A[f_1,\cdots,f_n]$. As a consequence $A[\underline{t}]=
A[\underline{f}]$. Going back to the map
$$\omega: Z(A[\underline{t}])=Z(A[\underline{f}])\to Z(B[\underline{s}]),$$
one sees that $\omega$ maps $f_i$ to $s_i$ for $i=1,\cdots,n$.
Let $J$ be the ideal of $Z(A[\underline{t}])$ generated by $\{f_i\}_{i=1}^n$
and $J'$ be the ideal of $Z(B[\underline{s}])$ generated by $\{s_i\}_{i=1}^n$.
Then $J'=\omega(J)$. By Lemma \ref{xxlem2.1}(5), the algebra
$A$ (which is isomorphic to $A[\underline{t}]/JA$) is Morita
equivalent to $B$ (which is isomorphic to $B[\underline{s}]/J'B$). 
The assertion follows.

Next we consider the ``cancellative" version.
Suppose that $\omega':A[\underline{t}]\to B[\underline{s}]$ is an isomorphism
which restricts to an isomorphism between the centers 
$\omega: Z(A)[\underline{t}]\to Z(B)[\underline{s}]$. Then $\omega'$
induces a (trivial) Morita equivalence, and $\omega$ is the map 
given in \eqref{E2.1.1}.
Re-use the notation introduced in the above proof. The above proof shows that
$A[\underline{t}]=A[\underline{f}]$ where $f_i=\omega^{-1}(s_i)$ for all
$i$. Therefore $\omega'$ induces isomorphism
$$A\cong A[\underline{f}]/(\{f_i\}_{i=1}^n)\xrightarrow{\overline{\omega'}}
B[\underline{s}]/(\{s_i\}_{i=1}^n)\cong B$$
as desired. 
\end{proof}

For the rest of this section we study more properties concerning 
m-detectability.

\begin{lemma}
\label{xxlem3.7}
Let $A$ be an algebra with center $Z$. Suppose $Z$ is {\rm{(}}strongly{\rm{)}} 
cancellative.
\begin{enumerate}
\item[(1)]
If $Z$ is {\rm{(}}strongly{\rm{)}} detectable, then $A$ is {\rm{(}}strongly{\rm{)}}
m-detectable.
\item[(2)]
$Z$ is {\rm{(}}strongly{\rm{)}} detectable if and only if it is {\rm{(}}strongly{\rm{)}}
m-detectable.
\end{enumerate}
\end{lemma}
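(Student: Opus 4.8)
The plan is to prove Lemma \ref{xxlem3.7} by relating the Morita-theoretic data for $A$ to the purely commutative detectability data for its center $Z=Z(A)$, using the key observation that in a Morita equivalence $M(A[\underline t])\cong M(B[\underline s])$ the induced isomorphism $\omega\colon Z(A[\underline t])\to Z(B[\underline s])$ is precisely an isomorphism $Z[\underline t]\cong Z(B)[\underline s]$ of polynomial rings over commutative algebras. Since the centers $Z(A[\underline t])=Z[\underline t]$ and $Z(B[\underline s])=Z(B)[\underline s]$ are exactly the algebras appearing in the cancellation/detectability statements for $Z$, the Morita hypotheses for $A$ translate verbatim into the isomorphism hypotheses for $Z$. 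I treat both the ``strongly'' and plain versions together since the arguments are identical after suppressing the number of variables; I would say so explicitly and then run the ``strongly'' case.

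For part (1), I would start from an equivalence $M(A[\underline t])\cong M(B[\underline s])$ and pass to the center isomorphism $\omega\colon Z[\underline t]\to Z(B)[\underline s]$ of Lemma \ref{xxlem2.1}(3). Because $Z$ is strongly cancellative, $Z\cong Z(B)$; in particular $Z(B)$ is again a commutative algebra for which I may invoke the detectability hypothesis on $Z$. Since $Z$ is strongly detectable and $\omega$ is an isomorphism of polynomial rings $Z[\underline t]\cong Z(B)[\underline s]$, detectability of $Z$ gives $s_i\in Z(B)\{\omega(t_1),\dots,\omega(t_n)\}$, equivalently (applying $\omega^{-1}$) $\omega^{-1}(s_i)\in Z\{t_1,\dots,t_n\}=Z[\underline t]$. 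This is the engine: writing $f_i=\omega^{-1}(s_i)$, detectability of $Z$ forces each $t_i$ to lie in $Z\{f_1,\dots,f_n\}\subseteq A\{f_1,\dots,f_n\}$, which is exactly the assertion that $A$ is strongly m-detectable. I would present this as a short chain of equalities mirroring the computation in the proof of Lemma \ref{xxlem3.4}.

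For part (2), the forward direction is contained in part (1) once one notes that $Z$ is trivially its own center, so m-detectability of $Z$ (as an algebra) reduces to detectability of $Z$; but more directly I would observe that for a commutative algebra the Morita map $\omega$ on centers \emph{is} the full algebra isomorphism, so $M(Z[\underline t])\cong M(Z(B)[\underline s])$ carries the same data as an isomorphism $Z[\underline t]\cong Z(B)[\underline s]$. Thus the defining conditions for ``$Z$ is strongly detectable'' and ``$Z$ is strongly m-detectable'' coincide literally: in each case one is given an isomorphism $\phi\colon Z[\underline t]\to Z(B)[\underline s]$ (respectively $\phi=\omega$) and asked whether $s_i\in Z(B)\{\phi(t_i)\}$, equivalently $\phi^{-1}(s_i)\in Z[\underline t]$. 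I would spell out that every Morita equivalence between commutative algebras is induced by an algebra isomorphism of their centers, and conversely every algebra isomorphism induces a (trivial) Morita equivalence, so the two sets of hypotheses produce identical conclusions.

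The main obstacle I anticipate is the bookkeeping around whether $Z(B)$ is commutative and whether one is entitled to apply the detectability hypothesis of $Z$ to the ring $Z(B)$: detectability of $Z$ is a statement quantified over \emph{all} comparison algebras $B'$, so as long as $\omega\colon Z[\underline t]\to Z(B)[\underline s]$ is a genuine isomorphism of polynomial rings with the second factor commutative, the hypothesis applies with $B'=Z(B)$. The (strong) cancellativity hypothesis on $Z$ is what guarantees $Z\cong Z(B)$, keeping us inside the commutative world; I would make sure to invoke it before applying detectability, since detectability of $Z$ alone only controls the \emph{locations} $\omega^{-1}(s_i)$ relative to $Z[\underline t]$ and says nothing until we know the target is a polynomial ring over a commutative ring. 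Once this is handled, everything else is the routine substitution $f_i=\omega^{-1}(s_i)$ already rehearsed in Lemmas \ref{xxlem3.4} and \ref{xxlem3.6}.
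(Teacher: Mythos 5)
Your overall skeleton matches the paper's proof: pass to the center isomorphism $\omega\colon Z[\underline{t}]\to Z(B)[\underline{s}]$ from Lemma \ref{xxlem2.1}(3), use (strong) cancellativity of $Z$ to get $Z(B)\cong Z$, invoke detectability, and conclude $t_i\in Z\{f_1,\dots,f_n\}\subseteq A\{f_1,\dots,f_n\}$ with $f_i=\omega^{-1}(s_i)$. However, the key step as you wrote it applies detectability in the wrong direction, and this is a genuine gap. Definition \ref{xxdef3.1} is asymmetric: detectability of the \emph{source} coefficient algebra of an isomorphism yields a conclusion about the \emph{target} side. Applying detectability of $Z$ to $\omega$ gives $s_i\in Z(B)\{\omega(t_1),\dots,\omega(t_n)\}$, i.e.\ $Z(B)[\underline{s}]=Z(B)\{\omega(t_j)\}$. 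Your claimed reformulation ``equivalently, $\omega^{-1}(s_i)\in Z\{t_1,\dots,t_n\}=Z[\underline{t}]$'' is not equivalent to this: applying $\omega^{-1}$ actually gives $Z[\underline{t}]=\omega^{-1}(Z(B))\{t_1,\dots,t_n\}$, and since you do not know that $\omega^{-1}(Z(B))=Z$ (that would be m-$Z$-retractability, which is not a hypothesis here), what you wrote is only the vacuous observation that $\omega^{-1}$ lands in $Z[\underline{t}]$. Consequently your next assertion --- that detectability of $Z$ ``forces'' $t_i\in Z\{f_1,\dots,f_n\}$ --- does not follow from anything you derived: ``$Z(B)[\underline{s}]$ is generated by $Z(B)$ together with the $\omega(t_j)$'' and ``$Z[\underline{t}]$ is generated by $Z$ together with the $\omega^{-1}(s_j)$'' are different statements, and the first does not yield the second without further argument. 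The same conflation appears in your part (2), where you claim the detectability and m-detectability conditions ``coincide literally''; they do not, precisely because detectability concludes on the $B$ side ($s_i\in B\{\phi(t_j)\}$) while m-detectability concludes on the $Z$ side ($t_i\in Z\{\omega^{-1}(s_j)\}$) --- this asymmetry is why the paper defines m-detectability using $\omega^{-1}$.

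The fix, which is exactly what the paper does, is short but essential: since $Z(B)\cong Z$ by cancellativity and detectability is an isomorphism invariant, $Z(B)$ is (strongly) detectable; now apply detectability of $Z(B)$ to the \emph{inverse} isomorphism $\omega^{-1}\colon Z(B)[\underline{s}]\to Z[\underline{t}]$, whose target is presented as the polynomial ring over $Z$ in the $t_j$. The conclusion of Definition \ref{xxdef3.1} for this isomorphism is exactly $t_i\in Z\{\omega^{-1}(s_1),\dots,\omega^{-1}(s_n)\}$, whence $t_i\in A\{\omega^{-1}(s_j)\}$ as required. Likewise in part (2), the nontrivial direction is obtained by transporting strong m-detectability across $B\cong Z$ and applying it to the Morita equivalence read in the reverse direction (so that the roles of $t_j$ and $s_j$ swap), which produces $s_i\in B\{\phi(t_j)\}$. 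Once this bridge --- transport the property along the isomorphism supplied by cancellativity, then apply it to the inverse map --- is made explicit, your argument becomes the paper's proof.
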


\begin{proof} Following the pattern before, we  prove only the ``strongly''
version.

(1) Suppose $B$ is an algebra such that $A[\underline{t}]$ and $B[\underline{s}]$ 
are Morita equivalent. Let $\omega: Z[\underline{t}]\to Z(B)[\underline{s}]$ 
be the algebra isomorphism given in \eqref{E2.1.1}. Since $Z$ is strongly cancellative,
one has that $Z(B)\cong Z$. Now we have an isomorphism 
$\omega^{-1}: Z(B)[\underline{s}]\cong 
Z[\underline{t}]$. Since $Z(B)$ (or $Z$) is strongly detectable, 
$t_i\in Z\{\omega^{-1}(s_1), \cdots, \omega^{-1}(s_n)\}$ for 
all $i$. Thus $t_i\in A\{\omega^{-1}(s_1), \cdots, \omega^{-1}(s_n)\}$ 
for all $i$. This means that $A$ is strongly
m-detectable.

(2) One direction is part (1). For the other direction, assume that 
$Z$ is strongly m-detectable. Consider any algebra isomorphism
$\phi: Z[\underline{t}]\to B[\underline{s}]$. It is clear that $B$ is 
commutative and $B\cong Z$ since $Z$ is strongly cancellative.
Then $\phi$ induces a (trivial) Morita equivalent and the map
$\omega$ in \eqref{E2.1.1} is just $\phi$. Now the strong
$m$-detectability of $Z$ implies that $Z$ is strongly detectable. 
\end{proof}

The next result is similar to \cite[Proposition 3.10]{LWZ}.

\begin{proposition}
\label{xxpro3.8}
If the center $Z$ of $A$ is an affine domain of GK-dimension one
that is not isomorphic to $\Bbbk'[x]$ for some field extension
$\Bbbk'\supseteq \Bbbk$, then $A$ is  strongly
m-detectable.
\end{proposition}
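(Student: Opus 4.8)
The plan is to push the entire problem onto the center $Z$ and then invoke Lemma~\ref{xxlem3.7}. That lemma says that when $Z$ is strongly cancellative, strong detectability of $Z$ already forces $A$ to be strongly m-detectable. So it is enough to prove the two purely commutative statements: that $Z$ is strongly cancellative, and that $Z$ is strongly detectable.

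For the first, $Z$ is an affine commutative domain of GK-dimension one, so I would invoke the (strong form of the) Abhyankar--Eakin--Heinzer theorem \cite[Theorem 3.3]{AEH}: any isomorphism $Z[\underline t]\cong B[\underline s]$ forces $B\cong Z$. For the second, I would apply the commutative cancellation result \cite[Proposition 3.10]{LWZ} to the algebra $Z$ itself; since $Z$ equals its own center and is, by hypothesis, an affine domain of GK-dimension one not isomorphic to $\Bbbk'[x]$, that proposition yields that $Z$ is strongly detectable. Feeding these two facts into Lemma~\ref{xxlem3.7}(1) completes the argument.

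If I wanted to reprove the strong detectability of $Z$ in place rather than cite it, I would route through retractability. The key step is to establish that $Z$ is strongly $\LND^H_1$-rigid, i.e. $ML^H_1(Z[\underline t])=Z$ for all $n$ (recall $\LND^H_1=\LND^H$, so this is the full Makar--Limanov invariant). Granting this, for any isomorphism $\phi\colon Z[\underline t]\cong C[\underline s]$ strong cancellation gives $C\cong Z$, so $C$ too is strongly $\LND^H_1$-rigid and $ML^H_1(C[\underline s])=C$. Since any algebra isomorphism carries the Makar--Limanov invariant onto the Makar--Limanov invariant, $\phi(Z)=\phi(ML^H_1(Z[\underline t]))=ML^H_1(C[\underline s])=C$, i.e. $Z$ is strongly $Z$-retractable; strong detectability then follows as in \cite[Lemma 3.2]{LWZ} (the commutative shadow of Lemma~\ref{xxlem3.4}).

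The main obstacle is exactly this stable rigidity: one must rule out every nonzero locally nilpotent higher derivation of $Z[\underline t]$ whose kernel meets $Z$ in a proper subalgebra. Geometrically this reflects the fact that an affine curve carrying a nontrivial additive-group action must be the affine line, which is why the hypothesis $Z\not\cong\Bbbk'[x]$ is indispensable; the genuinely delicate point is that rigidity must survive adjoining the polynomial variables $t_1,\dots,t_n$. This is precisely the content that \cite[Proposition 3.10]{LWZ} packages, so in practice I would rely on the citation rather than redo the stable-rigidity computation.
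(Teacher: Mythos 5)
Your proof is correct, but it routes through a different pair of lemmas than the paper does. The paper's own argument is a three-step chain: by \cite[Theorem 3.3]{AEH} the center $Z$ is strongly retractable; since a Morita equivalence $M(A[\underline{t}])\cong M(B[\underline{s}])$ induces via \eqref{E2.1.1} an honest algebra isomorphism $Z[\underline{t}]\to Z(B)[\underline{s}]$, retractability of $Z$ applied to that isomorphism makes $A$ strongly m-$Z$-retractable; and Lemma \ref{xxlem3.4} then converts this to strong m-detectability. You instead verify the two hypotheses of Lemma \ref{xxlem3.7}(1): strong cancellativity of $Z$ (again from AEH) and strong detectability of $Z$ (by applying \cite[Proposition 3.10]{LWZ} to $Z$ itself, which is legitimate since $Z$ equals its own center and that result lives in an earlier paper, so there is no circularity). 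Both arguments rest on the same Abhyankar--Eakin--Heinzer input, and both transfer lemmas operate by applying a property of the commutative center to the isomorphism $\omega$ of \eqref{E2.1.1}; the difference is that the paper needs only retractability of $Z$, while you need cancellativity plus detectability, the latter outsourced to the non-Morita predecessor of this very proposition. What your route buys is a clean formal reduction of the Morita statement to its already-known commutative counterpart; what the paper's route buys is independence from \cite[Proposition 3.10]{LWZ} and a slightly leaner set of inputs. Your optional sketch of reproving detectability of $Z$ via strong $\LND^H_1$-rigidity (rigidity $\Rightarrow$ retractability $\Rightarrow$ detectability) correctly isolates where the hypothesis $Z\not\cong\Bbbk'[x]$ enters, and you rightly defer the delicate stable-rigidity step to the citations rather than redo it.
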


\begin{proof} By \cite[Theorem 3.3]{AEH}, $Z$ is strongly retractable 
and cancellative. As a consequence, $Z$ is a strongly m-$Z$-retractable.
By Lemma \ref{xxlem3.4}, $A$ is strongly m-detectable. 
\end{proof}

\section{Proofs of Theorems \ref{xxthm0.3} and \ref{xxthm0.4}}
\label{xxsec4}

In this section we will use the results in the previous
sections to show some classes of algebras are m-cancellative.
We first prove Theorem \ref{xxthm0.4}.

\begin{theorem}
\label{xxthm4.1} If A is left (or right) noetherian, and the 
center of A is artinian, then $A$ is strongly m-detectable. As 
a consequence, $A$ is strongly m-cancellative.
\end{theorem}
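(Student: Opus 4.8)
The plan is to reduce the statement to the machinery already assembled, principally Lemma~\ref{xxlem3.6}, which says that a strongly Hopfian and strongly m-detectable algebra is strongly m-cancellative. Since left (or right) noetherian algebras are strongly Hopfian by Example~\ref{xxex3.5}(1), the ``consequence'' part of the theorem follows immediately once strong m-detectability is established. So the whole proof concentrates on proving that $A$ is strongly m-detectable, and the rest is bookkeeping.

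\medskip

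To prove strong m-detectability, suppose $B$ is an algebra with $M(A[\underline{t}])\cong M(B[\underline{s}])$, and let $\omega\colon Z(A)[\underline{t}]\to Z(B)[\underline{s}]$ be the induced isomorphism of centers from \eqref{E2.1.1} (using that $Z(A[\underline{t}])=Z(A)[\underline{t}]$ for a central polynomial extension). The key observation is that the center $Z(A)$ is artinian by hypothesis, so $Z(A)[\underline{t}]$ is an artinian ring adjoined finitely many central indeterminates. An artinian commutative ring is a finite product of local artinian rings, hence has Krull dimension zero, and I expect one can argue directly that such a center is strongly $Z$-retractable: any isomorphism of the centers $\omega\colon Z(A)[\underline{t}]\to Z(B)[\underline{s}]$ must carry the bottom artinian layer (characterized intrinsically, e.g. as the set of elements algebraic/integral over $\Bbbk$, or as the nilradical-plus-idempotent structure together with the maximal artinian subalgebra) onto the corresponding layer of $Z(B)[\underline{s}]$, forcing $\omega(Z(A))=Z(B)$. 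Once $A$ is shown strongly m-$Z$-retractable, Lemma~\ref{xxlem3.4} gives strong m-detectability for free.

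\medskip

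Concretely, I would identify $Z(A)$ inside $Z(A)[\underline{t}]$ as the largest artinian $\Bbbk$-subalgebra, or equivalently as the integral closure of $\Bbbk$ in $Z(A)[\underline{t}]$ (since over an artinian base the indeterminates $t_i$ are transcendental, every element integral over $\Bbbk$ lies in the degree-zero part $Z(A)$). This characterization is purely ring-theoretic and preserved by the isomorphism $\omega$, so $\omega$ restricts to an isomorphism $Z(A)\xrightarrow{\sim} Z(B)$ and the complementary transcendence data match up; that is exactly the condition $\omega(Z(A))=Z(B)$ demanded by strong m-$Z$-retractability in Definition~\ref{xxdef2.6}(4). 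Feeding this into Lemma~\ref{xxlem3.4} yields strong m-detectability, and then Lemma~\ref{xxlem3.6}(2) delivers both strong m-cancellativity and strong cancellativity.

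\medskip

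The main obstacle I anticipate is the intrinsic characterization of $Z(A)$ as a distinguished subalgebra of $Z(A)[\underline{t}]$ that is provably preserved by an arbitrary $\Bbbk$-algebra isomorphism onto $Z(B)[\underline{s}]$. The naive ``integral closure of $\Bbbk$'' description is clean when $Z(A)$ is finite-dimensional over $\Bbbk$, but one must be careful that the artinian hypothesis (rather than finite-dimensionality) still forces every integral-over-$\Bbbk$ element into the degree-zero component and that no new integral elements are created after adjoining the $t_i$; handling the possible residue field extensions and nilpotents in $Z(A)$ requires attention. An alternative route that sidesteps this is to invoke the discriminant-retractability framework of Proposition~\ref{xxpro2.7} or Corollary~\ref{xxcor2.11} with a suitable stable Morita-invariant property, but since the center is artinian (dimension zero) the cleanest argument is likely the direct structural one sketched above rather than the discriminant machinery, which is tuned to positive-dimensional affine centers.
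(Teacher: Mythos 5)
Your outer structure is right: once strong m-detectability is in hand, Example~\ref{xxex3.5}(1) (noetherian $\Rightarrow$ strongly Hopfian) and Lemma~\ref{xxlem3.6}(2) finish the proof exactly as you say. But the core of your argument---establishing strong m-detectability by first proving strong m-$Z$-retractability and then invoking Lemma~\ref{xxlem3.4}---cannot work, because the statement you are trying to prove at that step is false. An artinian commutative algebra need not be retractable: the paper's own Example~\ref{xxex7.4}, $Z=\Bbbk[x,y]/(x^2,y^2,xy)$ (taken from \cite[Example 3.3]{LWZ}), is artinian yet neither retractable nor m-retractable. Concretely, the map $x\mapsto x$, $y\mapsto y+xt$, $t\mapsto t$ is a $\Bbbk$-algebra automorphism of $Z[t]$ (since $(y+xt)^2=x(y+xt)=0$) whose image of $Z$ is $\Bbbk+\Bbbk x+\Bbbk(y+xt)\neq Z$. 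So no intrinsic characterization of $Z$ inside $Z[\underline{t}]$ can force $\omega(Z(A))=Z(B)$ for an arbitrary isomorphism $\omega$; retractability in the sense of Definition~\ref{xxdef2.6} genuinely fails here.

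Your two proposed characterizations fail for exactly the reason you flagged as the ``main obstacle,'' and the obstacle is fatal rather than technical. Every nilpotent element of $Z(A)[\underline{t}]$ is integral over $\Bbbk$: in $(\Bbbk[x]/(x^2))[t]$ the element $xt$ satisfies $(xt)^2=0$, so the integral closure of $\Bbbk$ in $Z(A)[\underline{t}]$ contains $\Bbbk$ plus the entire nilradical $x\Bbbk[t]$, which is strictly larger than $Z(A)=\Bbbk[x]/(x^2)$. Likewise there is no ``largest artinian $\Bbbk$-subalgebra'': the subalgebras $\Bbbk+\Bbbk x+\Bbbk xt+\cdots+\Bbbk xt^k$ form a strictly increasing chain of artinian subalgebras whose union $\Bbbk+x\Bbbk[t]$ is not artinian (not even noetherian). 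The paper's proof avoids retractability altogether: it cites \cite[Theorem 4.1]{LWZ} for the fact that an artinian commutative algebra $Z$ is strongly \emph{detectable} and strongly \emph{cancellative}---detectability being a weaker property that does survive the nilpotent phenomenon above---and then applies Lemma~\ref{xxlem3.7}(1), which lifts detectability plus cancellativity of the center to strong m-detectability of $A$. To repair your proof you should replace the retractability step by this detectability argument (or reprove the relevant part of \cite[Theorem 4.1]{LWZ}); the remainder of your write-up then goes through unchanged.
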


\begin{proof} Let $Z$ be the center of $A$. Then $Z$ is artinian
by hypothesis. By \cite[Theorem 4.1]{LWZ}, $Z$ is strongly 
detectable and strongly cancellative. By Lemma \ref{xxlem3.7}(1), 
$A$ is strongly m-detectable. By Example \ref{xxex3.5}(1), $A$ is 
strongly Hopfian. The consequence follows from Lemma \ref{xxlem3.6}(2). 
\end{proof}

Theorem \ref{xxthm0.4} is a special case of Theorem \ref{xxthm4.1}.

\begin{theorem}
\label{xxthm4.2}
Let $A$ be an algebra with  strongly cancellative center $Z$. 
Suppose $J$ is the prime radical of $Z$ such that {\rm{(a)}} 
$J$ is nilpotent and {\rm{(b)}} $Z/J$ is a finite direct sum of
fields. Then the following hold. 
\begin{enumerate}
\item[(1)]
$A$ is strongly m-detectable.
\item[(2)]
If further $A$ is strongly Hopfian, then $A$ is strongly m-cancellative.
\end{enumerate}
\end{theorem}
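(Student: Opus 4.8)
The plan is to reduce the whole theorem to a single statement about the commutative center: that $Z$ is \emph{strongly detectable} in the sense of Definition \ref{xxdef3.1}(2). Granting this, part (1) is immediate from Lemma \ref{xxlem3.7}(1), since $Z$ is strongly cancellative by hypothesis; and part (2) then follows from Lemma \ref{xxlem3.6}(2) once $A$ is assumed strongly Hopfian. So the entire content is the claim
\begin{equation*}
(\ast)\quad\textit{a commutative algebra $Z$ with nilpotent prime radical $J$ and $Z/J$ a finite direct sum of fields is strongly detectable.}
\end{equation*}

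To prove $(\ast)$ I would start from an arbitrary isomorphism $\phi\colon Z[\underline{t}]\to B[\underline{s}]$ (so $B$ is necessarily commutative) and aim to show $B[\underline{s}]=R$, where $R:=B\{\phi(t_1),\dots,\phi(t_n)\}$. First reduce modulo nilradicals: $\phi$ carries the nilradical $J[\underline{t}]$ of $Z[\underline{t}]$ onto the nilradical $N[\underline{s}]$ of $B[\underline{s}]$, where $N$ is the nilradical of $B$. Since $Z$ is strongly cancellative we have $B\cong Z$, so $N$ is nilpotent and $B/N=\prod_{j=1}^{r}\Bbbk'_j$ is again a finite direct sum of fields; moreover $\phi$ descends to an isomorphism $\bar\phi\colon (Z/J)[\underline{t}]\cong(B/N)[\underline{s}]$. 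Writing $Z/J=\prod_{i=1}^{r}\Bbbk_i$, I would then prove detectability \emph{at the reduced level}: because idempotents of a polynomial ring over a commutative ring are the idempotents of the ring, $\bar\phi$ respects the primitive idempotents, and within each field factor the coefficient field is exactly the set of units together with $0$; hence $\bar\phi(Z/J)=B/N$. As the $t$-variables generate $(Z/J)[\underline{t}]$ over $Z/J$, their images generate $(B/N)[\underline{s}]$ over $B/N$, giving $\bar s_j\in(B/N)\{\bar\phi(\underline{t})\}$, i.e. $s_j\in R+N[\underline{s}]$ for each $j$.

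The final step is to remove the nilpotent error by climbing the filtration $\mathfrak{M}\supseteq\mathfrak{M}^{2}\supseteq\cdots$, where $\mathfrak{M}:=N[\underline{s}]$ and $\mathfrak{M}^{l}=N^{l}[\underline{s}]$. Fix $g_j\in R$ with $\nu_j:=s_j-g_j\in\mathfrak{M}$. I claim $\mathfrak{M}^{l}\subseteq R+\mathfrak{M}^{l+1}$ for every $l\ge 0$: a typical generator $c\,s^{\alpha}$ of $\mathfrak{M}^{l}$ has $c\in N^{l}\subseteq B\subseteq R$, so $c\prod_j g_j^{\alpha_j}\in R$; expanding $\prod_j(s_j-\nu_j)^{\alpha_j}$ shows this differs from $c\,s^{\alpha}$ only by terms each containing some $\nu_j$, hence lying in $N^{l}\cdot\mathfrak{M}\subseteq\mathfrak{M}^{l+1}$. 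Thus $c\,s^{\alpha}\in R+\mathfrak{M}^{l+1}$, and since such elements generate $\mathfrak{M}^{l}$ the claim follows. As $\mathfrak{M}$ is nilpotent, iterating gives
$$B[\underline{s}]=\mathfrak{M}^{0}\subseteq R+\mathfrak{M}\subseteq R+\mathfrak{M}^{2}\subseteq\cdots\subseteq R+\mathfrak{M}^{N}=R,$$
so $R=B[\underline{s}]$, which is exactly $(\ast)$.

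The main obstacle is precisely this last lifting. Reduction modulo the nilradical only places each $s_j$ in $R$ \emph{up to} a nilpotent error, and in general a subring $R\supseteq B$ with $R+\mathfrak{M}=B[\underline{s}]$ need \emph{not} equal $B[\underline{s}]$ when $\mathfrak{M}$ is nilpotent; the reduced quotient alone carries no more information. What saves the argument is the combination of two structural features: $R$ contains all of $B$ (hence every coefficient $c\in N^{l}$), so products $c\prod_j g_j^{\alpha_j}$ approximate $c\,s^{\alpha}$ to higher nilpotent order, and the nilpotency of $J$ (equivalently of $N$) makes the filtration finite so the induction terminates. This is also where the hypotheses are genuinely used — the Krull-dimension-zero nature of $Z$ (a finite product of fields modulo a nilpotent ideal) is what distinguishes $(\ast)$ from the non-detectable polynomial ring $\Bbbk[x]$, and it is the nilpotent part, not the reduced part, that does the essential work.
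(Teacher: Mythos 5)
Your proposal is correct, and its skeleton is exactly the paper's: both arguments reduce the whole theorem to the single claim that the center $Z$ is strongly detectable, then deduce (1) from Lemma \ref{xxlem3.7}(1) (which is where the hypothesis that $Z$ is strongly cancellative enters) and (2) from Lemma \ref{xxlem3.6}(2) under the strongly Hopfian assumption. The one point of divergence is how the detectability of $Z$ is established: the paper disposes of it in a single sentence by citing the proof of \cite[Theorem 4.2]{LWZ}, whereas you prove it from scratch. Your self-contained argument for $(\ast)$ is sound: the nilradical of $Z[\underline{t}]$ is indeed $J[\underline{t}]$, strong cancellativity is used correctly (and necessarily) to transfer hypotheses (a) and (b) to $B$, the idempotent-and-unit argument pins down $\bar\phi(Z/J)=B/N$ because both reduced rings are finite products of fields, and the induction step $\mathfrak{M}^{l}\subseteq R+\mathfrak{M}^{l+1}$ works precisely because $B\subseteq R$ supplies the coefficients $c\in N^{l}$ while every corrective term picks up an extra factor from $N[\underline{s}]$, so nilpotency of $J$ terminates the climb. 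In short, you supply in full the one ingredient the paper outsources to \cite{LWZ}; modulo that citation, the two proofs coincide, and your version has the merit of being self-contained.
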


\begin{proof}
(1) By the proof of \cite[Theorem 4.2]{LWZ}, $Z$ is strongly detectable. 
By Lemma \ref{xxlem3.7}, $A$ is strongly m-detectable.

(2) Follows from Lemma \ref{xxlem3.6} and part (1).
\end{proof}

Next is Theorem \ref{xxthm0.3}.

\begin{corollary}
\label{xxcor4.3}
Suppose $A$ is strongly Hopfian 
and the center of $A$ is artinian. Then $A$ is strongly m-detectable 
and strongly m-cancellative.
\end{corollary}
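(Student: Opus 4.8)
The plan is to deduce this corollary directly from Theorem \ref{xxthm4.2}, whose hypotheses I expect to become automatic once the center $Z:=Z(A)$ is assumed to be commutative artinian. So the real content reduces to checking those hypotheses for an artinian center, after which the two conclusions fall out of the two parts of Theorem \ref{xxthm4.2}.

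First I would recall the structure theory of commutative artinian rings. If $Z$ is artinian, then its prime radical $J$ (which here coincides with the nilradical, since $Z$ is commutative) is nilpotent, and the quotient $Z/J$ is a semisimple commutative ring, hence a finite direct sum of fields. This verifies precisely conditions (a) and (b) in the statement of Theorem \ref{xxthm4.2}.

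Second, I need to know that $Z$ is strongly cancellative, which is the one input that is not purely formal. Here I would invoke \cite[Theorem 4.1]{LWZ} (the same reference already used in the proof of Theorem \ref{xxthm4.1}), which asserts that a commutative artinian algebra is strongly detectable and strongly cancellative. With this established, every hypothesis of Theorem \ref{xxthm4.2} is satisfied by $A$.

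Finally, applying Theorem \ref{xxthm4.2}(1) yields that $A$ is strongly m-detectable. Since $A$ is strongly Hopfian by assumption, Theorem \ref{xxthm4.2}(2) then gives that $A$ is strongly m-cancellative, completing the argument. I expect the only genuinely nonroutine step to be the appeal to the strong cancellation property of artinian centers drawn from \cite{LWZ}; the remaining steps are the standard decomposition of commutative artinian rings and a direct citation of Theorem \ref{xxthm4.2}, so no substantial new obstacle should arise.
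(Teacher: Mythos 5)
Your proposal is correct and follows essentially the same route as the paper's own proof: both verify conditions (a) and (b) of Theorem \ref{xxthm4.2} from the structure theory of commutative artinian rings, cite \cite[Theorem 4.1]{LWZ} for the strong cancellativity (and detectability) of the artinian center, and then apply the two parts of Theorem \ref{xxthm4.2} to conclude.
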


\begin{proof} Let $Z$ be the center of $A$. By \cite[Theorem 4.1]{LWZ}, 
$Z$ is strongly detectable and strongly cancellative. Since 
$Z$ is artinian, it satisfies conditions (a) and (b) in Theorem 
\ref{xxthm4.2}. The assertion follows by Theorem \ref{xxthm4.2}.
\end{proof}

\section{Proof of Theorem \ref{xxthm0.6}}
\label{xxsec5}

{\bf We assume that $\Bbbk$ is algebraically closed in this section.} 
Under this hypothesis, a ${\mathcal P}$-discriminant ideal
has the following nice property. This is one of the reasons we need the 
above hypothesis.

\begin{lemma}
\label{xxlem5.1} Let ${\mathcal P}$ be a property. 
Then ${\mathcal P}$ is stable.
\end{lemma}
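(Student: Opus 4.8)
The plan is to fix an arbitrary $\Bbbk$-algebra $A$ whose center $Z:=Z(A)$ is affine over $\Bbbk$ together with an integer $n\geq 1$, and to establish the required equality of ideals
$$I_{\mathcal P}(A\otimes \Bbbk[\underline{t}])=I_{\mathcal P}(A)\otimes \Bbbk[\underline{t}]$$
inside $Z\otimes \Bbbk[\underline{t}]$. The first observation is that the center of $A\otimes \Bbbk[\underline{t}]$ is $Z\otimes\Bbbk[\underline{t}]=Z[\underline{t}]$, which is again affine over $\Bbbk$, so the left-hand discriminant ideal is well defined. The overall strategy is to first identify the discriminant \emph{set} $D_{\mathcal P}(A\otimes\Bbbk[\underline{t}])$ geometrically, and then to compute its vanishing ideal.

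For the first step I would invoke the hypothesis that $\Bbbk$ is algebraically closed. By the Nullstellensatz every maximal ideal $\mathfrak{M}$ of the affine algebra $Z[\underline{t}]$ has residue field $\Bbbk$, and the composite $Z\hookrightarrow Z[\underline{t}]\twoheadrightarrow \Bbbk$ exhibits $\fm:=\mathfrak{M}\cap Z$ as a maximal ideal of $Z$ with $Z/\fm=\Bbbk$; recording the images $a_i\in\Bbbk$ of the $t_i$ gives $\mathfrak{M}=\fm\,Z[\underline{t}]+(t_1-a_1,\dots,t_n-a_n)$. Thus the closed points of $Z[\underline{t}]$ are exactly the pairs $(\fm,a)$ with $\fm\in\MaxSpec(Z)$ and $a\in\Bbbk^n$, and the projection $\pi\colon\MaxSpec(Z[\underline{t}])\to\MaxSpec(Z)$ simply forgets $a$. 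The key computation is then
$$\bigl(A\otimes\Bbbk[\underline{t}]\bigr)\big/\mathfrak{M}\bigl(A\otimes\Bbbk[\underline{t}]\bigr)\;\cong\;A/\fm A,$$
obtained by first killing the $(t_i-a_i)$, which substitutes $t_i\mapsto a_i$ and returns $A$, and then reducing modulo $\fm$. Since $\mathcal P$ is invariant under algebra isomorphism, $\mathfrak{M}$ lies in the $\mathcal P$-locus of $A\otimes\Bbbk[\underline{t}]$ if and only if $\fm$ lies in the $\mathcal P$-locus of $A$; hence $L_{\mathcal P}(A\otimes\Bbbk[\underline{t}])=\pi^{-1}(L_{\mathcal P}(A))$ and, complementarily, $D_{\mathcal P}(A\otimes\Bbbk[\underline{t}])=D\times\Bbbk^n$ where $D:=D_{\mathcal P}(A)$.

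For the second step I would show that the vanishing ideal of such a cylinder is the extension of the vanishing ideal of its base, that is,
$$I\bigl(D\times\Bbbk^n\bigr)=I(D)\cdot Z[\underline{t}]=I_{\mathcal P}(A)\otimes\Bbbk[\underline{t}].$$
The inclusion $\supseteq$ is immediate, since an element of $I(D)$ is independent of $\underline{t}$ and already vanishes along $D$. For $\subseteq$, take $f=\sum_\alpha c_\alpha\,\underline{t}^{\,\alpha}\in Z[\underline{t}]$ vanishing on $D\times\Bbbk^n$. For each fixed $\fm\in D$, evaluation at the points $(\fm,a)$ yields the polynomial $\sum_\alpha c_\alpha(\fm)\,a^{\alpha}$ in $a$ with coefficients in $Z/\fm=\Bbbk$, and it vanishes for all $a\in\Bbbk^n$; because $\Bbbk$ is infinite this forces $c_\alpha(\fm)=0$, i.e. $c_\alpha\in\fm$, for every $\fm\in D$ and every $\alpha$. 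Therefore each $c_\alpha\in\bigcap_{\fm\in D}\fm=I(D)$, so $f\in I(D)\cdot Z[\underline{t}]$. Combining the two steps gives the claim; one may also note that $I(D)\cdot Z[\underline{t}]$ is automatically radical (since $(Z/I(D))[\underline{t}]$ is reduced), matching the fact that the left-hand side is an intersection of maximal ideals, though this is not needed for the equality.

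The argument is essentially formal once the geometry is arranged, so there is no single hard obstacle; the two places where algebraic closedness is genuinely used are the residue-field computation $Z/\fm=\Bbbk$, which makes the fibers of $A\otimes\Bbbk[\underline{t}]$ literally isomorphic to those of $A$ and is precisely why the standing hypothesis is imposed in this section, and the infiniteness of $\Bbbk$ in the cylinder computation. The point I would watch most carefully is the fiber identification $(A\otimes\Bbbk[\underline{t}])/\mathfrak{M}(A\otimes\Bbbk[\underline{t}])\cong A/\fm A$: over a non-closed field the residue field at $\mathfrak{M}$ could be a proper extension of $Z/\fm$, so the two fibers would no longer agree and the reduction to a cylinder would fail.
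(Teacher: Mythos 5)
Your proof is correct and takes essentially the same route as the paper's: use the Nullstellensatz to identify the maximal ideals of $Z[\underline{t}]$ as pairs $(\fm,a)$ with $\fm\in\MaxSpec(Z)$ and $a\in\Bbbk^n$, observe the fiber isomorphism $A[\underline{t}]/\mathfrak{M}A[\underline{t}]\cong A/\fm A$ so that $D_{\mathcal P}(A[\underline{t}])=D_{\mathcal P}(A)\times \Bbbk^n$, and then intersect the maximal ideals in that cylinder. The only difference is that you justify the final step $I(D\times\Bbbk^n)=I(D)\otimes\Bbbk[\underline{t}]$ in detail (via the infiniteness of $\Bbbk$), where the paper asserts it in one line.
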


\begin{proof} Let $Z$ be the center of $A$. By Definition \ref{xxdef2.3}(5),
we may assume that $Z$ is affine and write it as $\Bbbk[z_1,\cdots,z_m]/(R)$
where $\{z_1,\cdots,z_m\}$ is a generating set of $Z$ and $R$ is a set of 
relations. Every maximal ideal of $Z$ is of the form $(z_i-\alpha_i):=
(z_1-\alpha_1,\cdots,z_m-\alpha_m)$, where $\alpha_i\in \Bbbk$ for all $i$. 
Every maximal ideal of $Z[\underline{t}]$ is of the form
$$(z_i-\alpha_i,t_j-\beta_j)
:=(z_1-\alpha_1,\cdots,z_m-\alpha_m, t_1-\beta_1,\cdots,t_n-\beta_n)$$
where $\alpha_i, \beta_j\in \Bbbk$.  The natural embedding 
$Z\to Z[\underline{t}]$ induces a projection
$$\pi: \MaxSpec(Z[\underline{t}])\to \MaxSpec Z$$
by sending $\fm:=(z_i-\alpha_i,t_j-\beta_j)$ to $\pi(\fm):=(z_i-\alpha_i)$. 

Let $D_{\mathcal P}(A)$ be the ${\mathcal P}$-discriminant set of $A$. 
A maximal ideal $\fm$ is in $D_{\mathcal P}(A[\underline{t}])$ if and only
if $A[\underline{t}]/\fm A[\underline{t}]$ does not have property ${\mathcal P}$. 
Since $A[\underline{t}]/\fm A[\underline{t}] \cong A/\pi(\fm)A$, 
$\fm\in D_{\mathcal P}(A[\underline{t}])$ if and only if $\pi(\fm)
\in D_{\mathcal P}(A)$. This implies that $D_{\mathcal P}(A[\underline{t}])
=D_{\mathcal P}(A) \times {\mathbb A}^n$. As a consequence,
$$I_{\mathcal P}(A[\underline{t}])=
\bigcap_{\fm\in D_{\mathcal P}(A[\underline{t}])} \fm=
\big( \bigcap_{p\in D_{\mathcal P}(A)} p\big) \otimes \Bbbk[\underline{t}]
=I_{\mathcal P}(A)\otimes \Bbbk[\underline{t}].$$
Therefore ${\mathcal P}$ is stable by Definition \ref{xxdef2.3}(5).
\end{proof}

Let $A$ be an algebra with the center $Z$ being a domain. Let $\tau(A/Z)$ be the 
ideal of $A$ consisting of elements in $A$ that are annihilated by some nonzero
element in $Z$. Define the {\it annihilator ideal} of $Z$ to be
$$\kappa(A/Z)=\{z\in Z\mid z(\tau(A/Z))=0\}.$$

\begin{lemma}
\label{xxlem5.2} Retain the notation as above.
\begin{enumerate}
\item[(1)]
$\kappa$ is stable in the sense that $\kappa(A[\underline{t}]/Z[\underline{t}])
=\kappa(A/Z)\otimes \Bbbk[\underline{t}]$.
\item[(2)]
If $A$ and $B$ are Morita equivalent, then $\omega$ maps 
$\kappa(A/Z)$ to $\kappa(B/Z(B))$ bijectively.
\item[(3)]
If $A$ is left noetherian and suppose the center 
$Z$ is a domain, then $\tau(A/Z)\neq 0$ if and only if 
$\kappa(A/Z)$ is a proper ideal, neither $Z$ nor 0.
\end{enumerate}
\end{lemma}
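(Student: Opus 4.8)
The plan rests on the observation that $\tau(A/Z)$ is exactly the $Z$-torsion submodule of $A$, which is a two-sided ideal since $Z$ is central, and that $\kappa(A/Z)=\mathrm{Ann}_Z(\tau(A/Z))$. Because $Z$ is a domain, writing $K=\Frac(Z)$ we have the clean description $\tau(A/Z)=\ker(A\to A\otimes_Z K)$, which I would use throughout.

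For (1), first I would identify the torsion submodule of $A[\underline{t}]$ over $Z[\underline{t}]$. The center of $A[\underline{t}]=A\otimes\Bbbk[\underline{t}]$ is $Z[\underline{t}]$, again a domain, with fraction field $K(\underline{t})$. Writing $A[\underline{t}]=A\otimes_Z Z[\underline{t}]$ as a $Z[\underline{t}]$-module, the base change to $K(\underline{t})$ factors through $A[\underline{t}]\to (A\otimes_Z K)[\underline{t}]\to (A\otimes_Z K)\otimes_K K(\underline{t})$; the second map is injective because $A\otimes_Z K$ is a $K$-vector space and $K[\underline{t}]\hookrightarrow K(\underline{t})$, so the $Z[\underline{t}]$-torsion of $A[\underline{t}]$ equals the kernel of the coefficientwise map $A[\underline{t}]\to (A\otimes_Z K)[\underline{t}]$, namely $\tau(A/Z)[\underline{t}]$. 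The subtle point is precisely that being killed by a nonzero element of $Z[\underline{t}]$ forces each coefficient to be $Z$-torsion; the fraction-field reformulation is the cleanest way to see this. Finally, for any $Z$-module $M$ one has $\mathrm{Ann}_{Z[\underline{t}]}(M\otimes_Z Z[\underline{t}])=\mathrm{Ann}_Z(M)\otimes\Bbbk[\underline{t}]$, since $M[\underline{t}]=\bigoplus_\alpha M\underline{t}^\alpha$ and a polynomial annihilates it iff each coefficient lies in $\mathrm{Ann}_Z(M)$; applying this with $M=\tau(A/Z)$ gives $\kappa(A[\underline{t}]/Z[\underline{t}])=\kappa(A/Z)\otimes\Bbbk[\underline{t}]$.

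For (2), let $F\colon M(A)\to M(B)$ be a Morita equivalence; by Lemma \ref{xxlem2.1}(3) it intertwines multiplication by $z\in Z$ on any module $M$ with multiplication by $\omega(z)$ on $F(M)$. Two facts follow from $F$ being exact and faithful: first, $F$ carries the $Z$-torsion submodule $\tau_Z(M)=\bigcup_{0\neq z}\ker(z\colon M\to M)$ onto $\tau_{Z(B)}(F(M))$, since $F$ preserves kernels and unions and turns $\ker(z)$ into $\ker(\omega(z))$; second, $\omega(\mathrm{Ann}_Z(M))=\mathrm{Ann}_{Z(B)}(F(M))$, since $z\cdot M=0$ iff $\omega(z)\cdot F(M)=0$. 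The remaining point is that $F(A_A)$ is a progenerator $P$ of $M(B)$ rather than $B_B$; but if $P\mid B^n$ and $B\mid P^m$, then additivity of $\tau_{Z(B)}$ gives $\mathrm{Ann}_{Z(B)}(\tau_{Z(B)}(P))=\mathrm{Ann}_{Z(B)}(\tau_{Z(B)}(B_B))=\kappa(B/Z(B))$. Chaining these yields $\omega(\kappa(A/Z))=\omega(\mathrm{Ann}_Z(\tau_Z(A_A)))=\mathrm{Ann}_{Z(B)}(\tau_{Z(B)}(P))=\kappa(B/Z(B))$, and since $\omega$ is an isomorphism this is a bijection. I expect reconciling the progenerator $F(A_A)$ with the regular module $B_B$ to be the main obstacle in this part.

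For (3), the equivalence $\kappa(A/Z)=Z\iff\tau(A/Z)=0$ is immediate and needs no hypothesis, since $1\in\kappa(A/Z)$ says precisely $\tau(A/Z)=0$. The whole force of the statement is therefore the implication $\tau(A/Z)\neq 0\Rightarrow\kappa(A/Z)\neq 0$, and this is where left noetherianity enters. Since $\tau(A/Z)$ is a left ideal of the left noetherian ring $A$, it is finitely generated, say $\tau(A/Z)=Aa_1+\cdots+Aa_r$; choosing $0\neq z_i\in Z$ with $z_ia_i=0$ and setting $z=z_1\cdots z_r$, which is nonzero as $Z$ is a domain, gives $za_i=0$ for all $i$, hence $z\,\tau(A/Z)=0$ using centrality, so $0\neq z\in\kappa(A/Z)$. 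Combining with the first equivalence, $\tau(A/Z)\neq 0$ forces $\kappa(A/Z)$ to be both proper and nonzero, while conversely $\kappa(A/Z)\neq Z$ forces $\tau(A/Z)\neq 0$; this is the asserted dichotomy.
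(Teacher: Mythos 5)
Your proof is correct in all three parts. There is nothing in the paper to compare it against: the paper's proof of Lemma \ref{xxlem5.2} reads ``Easy to check. Details are omitted.'', so your writeup simply supplies, correctly, the routine verifications the authors suppressed --- the fraction-field description of $Z$-torsion and the coefficientwise annihilator computation for (1); the facts that a Morita equivalence $F$ intertwines multiplication by $z$ with multiplication by $\omega(z)$, preserves kernels and directed unions, and sends $A_A$ to a progenerator $P$ with $\mathrm{Ann}(\tau(P))=\mathrm{Ann}(\tau(B_B))$ by the two-sided direct-summand comparison for (2); and finite generation of the left ideal $\tau(A/Z)$ together with the domain hypothesis on $Z$ for (3).
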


\begin{proof} Easy to check. Details are omitted.
\end{proof}

\begin{lemma}
\label{xxlem5.3} 
Suppose $A$ is a finitely generated module over its center $Z$ and 
$Z$ is a domain. If $A$ is prime, then $\tau(A/Z)=0$.
\end{lemma}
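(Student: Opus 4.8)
The plan is to argue by contradiction, extracting from a single nonzero torsion element a two-sided relation that primeness forbids; crucially, I do not try to annihilate all of $\tau(A/Z)$ at once, but work with one element. Suppose $\tau(A/Z)\neq 0$ and choose $0\neq x\in \tau(A/Z)$. By the definition of $\tau(A/Z)$ there is a nonzero element $z\in Z$ with $zx=0$, and since $z$ is central we also get $xz=zx=0$.

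The key step I would carry out is the computation $xAz=0$. For any $a\in A$, centrality of $z$ lets me slide it past $a$, so that $xaz=xza=(xz)a=0$ by associativity and $xz=0$. This shows that the left element $x$ together with the whole ring and the right element $z$ produces the zero product. I would then invoke primeness in its elementwise form: $A$ is prime if and only if $xAz=0$ implies $x=0$ or $z=0$. Since $x\neq 0$ by choice, and $z\neq 0$ in $A$ (here I use that $Z$ is a domain sitting inside $A$, so a nonzero element of $Z$ is nonzero in $A$), this is the desired contradiction, forcing $\tau(A/Z)=0$.

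The main point to verify is really just the bookkeeping with centrality in the identity $xaz=(xz)a$, together with the correct reading of primeness as the condition $aAb=0\Rightarrow a=0\text{ or }b=0$; there is no serious obstacle. I would note that the finite-generation hypothesis is not actually needed for this route. If one preferred to exploit it, an alternative plan is to set $Q=\Frac(Z)$ and observe that $\tau(A/Z)$ is exactly the kernel of the localization map $A\to A\otimes_Z Q$ (with $A\otimes_Z Q$ finite-dimensional over $Q$ by finite generation), and then argue that a nonzero torsion ideal would contradict primeness; but the direct computation above is shorter and avoids passing to the ring of fractions altogether.
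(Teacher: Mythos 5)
Your proof is correct. The paper itself gives no argument for Lemma \ref{xxlem5.3} (it says only ``Easy to check. Details are omitted.''), and your computation --- using centrality of $z$ to get $xaz=(xz)a=0$ for all $a\in A$, hence $xAz=0$, and then the elementwise characterization of primeness ($aAb=0$ implies $a=0$ or $b=0$) --- is exactly the routine verification being omitted; your side remark is also accurate that finite generation of $A$ over $Z$ is not needed for this implication (that hypothesis matters for the surrounding results, e.g.\ via Small--Warfield, while the domain hypothesis on $Z$ is what makes $\tau(A/Z)$ an ideal in the first place).
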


\begin{proof} Easy to check. Details are omitted.
\end{proof}

\begin{proposition}
\label{xxpro5.4} 
Let $A$ be left noetherian such that the center $Z$ is 
an affine domain of GK-dimension one.
\begin{enumerate}
\item[(1)]
If $Z$ is not $\Bbbk[x]$, then $A$ is strongly m-$Z$-retractable, 
m-detectable, and m-cancellative.
\item[(2)]
If $Z=\Bbbk[x]$ and $\tau(A/Z)\neq 0$, then $A$ is 
strongly m-$Z$-retractable, m-detectable, and m-cancellative.
\end{enumerate}
\end{proposition}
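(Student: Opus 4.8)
The plan is to handle both parts by the two-step template of this section: prove that $A$ is strongly m-$Z$-retractable, then deduce strong m-detectability from Lemma \ref{xxlem3.4} and strong m-cancellativity from Lemma \ref{xxlem3.6}(2), the latter being applicable because $A$ is left noetherian and hence strongly Hopfian by Example \ref{xxex3.5}(1). (The unqualified ``m-detectable'' and ``m-cancellative'' are immediate from their strong counterparts.) Thus the real content of each part is the retractability statement.

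For part (1), I would first note that since $\Bbbk$ is algebraically closed, ``$Z\not\cong\Bbbk[x]$'' is equivalent to ``$Z\not\cong\Bbbk'[x]$ for every field extension $\Bbbk'\supseteq\Bbbk$'': any such $\Bbbk'$ is algebraic over $\Bbbk$, since otherwise $\Frac(Z)=\Bbbk'(x)$ would have transcendence degree at least two over $\Bbbk$, contradicting $\GKdim Z=1$; as $\Bbbk$ is algebraically closed this forces $\Bbbk'=\Bbbk$. Hence Proposition \ref{xxpro3.8} applies. To extract retractability explicitly, recall from the proof of that proposition that \cite[Theorem 3.3]{AEH} makes $Z$ strongly retractable and strongly cancellative; so given any equivalence $M(A[\underline{t}])\cong M(B[\underline{s}])$, the induced center isomorphism $\omega:Z[\underline{t}]\to Z(B)[\underline{s}]$ of Lemma \ref{xxlem2.1}(3) restricts, by strong retractability of the commutative domain $Z$, to $\omega(Z)=Z(B)$. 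This is strong m-$Z$-retractability, and the template finishes part (1).

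For part (2) the center equals $\Bbbk[x]$, so Proposition \ref{xxpro3.8} is unavailable and I would instead re-run the argument of Proposition \ref{xxpro2.7}(2) with the annihilator ideal $\kappa(A/Z)$ playing the role of the $\mathcal{P}$-discriminant ideal. Since $A$ is left noetherian, $Z$ is a domain, and $\tau(A/Z)\neq 0$, Lemma \ref{xxlem5.2}(3) gives that $\kappa(A/Z)$ is a proper nonzero ideal of $\Bbbk[x]$, hence $\kappa(A/Z)=(d)$ for a nonzero non-invertible $d$; by Example \ref{xxex2.9} and Theorem \ref{xxthm2.10}, $Z$ is strongly $\LND^H_d$-rigid. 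Given a Morita equivalence $M(A[\underline{t}])\cong M(B[\underline{s}])$ with center isomorphism $\omega:Z[\underline{t}]\to Z(B)[\underline{s}]$ (here $Z(B)$ is a domain, since $Z(B)[\underline{s}]\cong Z[\underline{t}]$ is), the stability of $\kappa$ in Lemma \ref{xxlem5.2}(1) yields $\kappa(A[\underline{t}]/Z[\underline{t}])=(d\otimes 1)$, while its Morita-invariance in Lemma \ref{xxlem5.2}(2) yields $\omega\big((d\otimes 1)\big)=\kappa(B[\underline{s}]/Z(B)[\underline{s}])=\kappa(B/Z(B))\otimes\Bbbk[\underline{s}]$. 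Consequently $\kappa(B/Z(B))=(d')$ is principal and $\omega(d)=d'$ up to a unit in $Z(B)^{\times}$, after which the final paragraph of the proof of Proposition \ref{xxpro2.7}(2) applies verbatim: strong $\LND^H_d$-rigidity gives $\omega(Z)=ML^H_{d'}(Z(B)[\underline{s}])\subseteq Z(B)$, and the GK-dimension count of \cite[Lemma 3.2]{BZ1} upgrades this to $\omega(Z)=Z(B)$. The template again finishes.

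The main obstacle I anticipate is part (2): the crux is confirming that $\kappa(A/Z)$ behaves like a genuine $\mathcal{P}$-discriminant ideal, so that $\omega$ transports its generator $d$ to a generator $d'$ up to a unit and the $\LND^H_d$-rigidity machinery of Proposition \ref{xxpro2.7} becomes available. Part (1), by contrast, is little more than Proposition \ref{xxpro3.8} together with the observation that its proof already delivers retractability, plus the routine passage to cancellativity.
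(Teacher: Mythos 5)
Your proof is correct and follows essentially the same route as the paper: part (1) rests on the Abhyankar--Eakin--Heinzer strong retractability of $Z$ transferred through the center isomorphism $\omega$, and part (2) uses $\kappa(A/Z)$ as a stable Morita-invariant substitute for a $\mathcal{P}$-discriminant ideal, with effectiveness in $\Bbbk[x]$ and $\LND^H_d$-rigidity feeding into the Proposition \ref{xxpro2.7}(2) argument. The only cosmetic difference is that the paper compresses your part (2) into a single citation of Corollary \ref{xxcor2.11} (``replacing $\mathcal{P}$ by $\kappa$''), whereas you unpack that corollary's proof explicitly.
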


\begin{proof} (1) By \cite[Theorem 3.3 and Corollary 3.4]{AEH},
$Z$ is strongly retractable. By Definition \ref{xxdef2.6}(3),
$A$ is strongly $m$-$Z$-retractable. By Lemma \ref{xxlem3.4},
$A$ is strongly m-detectable. Since $A$ is left noetherian, by 
Lemma \ref{xxlem3.6}(2), $A$ is strongly m-cancellative.

(2) Since $A$ is left noetherian and $\tau(A/Z)\neq 0$,
$\kappa(A/Z)$ is a nonzero proper ideal of $\Bbbk[x]$ by Lemma 
\ref{xxlem5.2}(3).
So there is a nonzero non-invertible element $f\in 
\Bbbk[x]$ such that $\kappa(A/Z)=(f)$. By Lemma \ref{xxlem5.2}(1,2),
$\kappa$ is a stable Morita invariant property. By replacing
${\mathcal P}$ by $\kappa$, Corollary \ref{xxcor2.11} 
implies that $A$ is strongly m-$Z$-retractable. The rest of the proof
is similar to the proof of part (1).
\end{proof}

For the rest of the section we consider the case when $Z=\Bbbk[x]$
and $\tau(A/Z)=0$, or more precisely, when $A$ is affine prime PI 
of GK-dimension one with $Z=\Bbbk[x]$. We need to recall some concepts.

Let $A$ be an affine prime algebra of GK-dimension one.
By a result of Small-Warfield \cite{SW}, $A$ is a finitely generated
module over its affine center. As a consequence, $A$ is noetherian.

Let $R$ be a commutative algebra, an $R$-algebra $A$ is called 
{\it Azumaya} if $A$ is a finitely generated faithful projective 
$R$-module and the canonical morphism
\begin{equation}
\label{E5.4.1}\tag{E5.4.1}
 A\otimes_R A^{op}\to \End_R(A)
\end{equation}
is an isomorphism. By \cite[Theorem 3.4]{DeI},
$A$ is Azumaya if and only if $A$ is a central separable algebra over $R$.
Since we assume that $\Bbbk$ is algebraically 
closed, we have the following equivalent definition.

\begin{definition} \cite[Introduction]{BY}
\label{xxdef5.5}
Let $A$ be an affine prime $\Bbbk$-algebra which is a finitely 
generated module over its affine center $Z(A)$. Let $n$ be the 
PI-degree of $A$, which is also the maximal possible 
$\Bbbk$-dimension of irreducible $A$-modules.
\begin{enumerate}
\item[(1)]
The {\it Azumaya locus} of $A$, denoted by ${\mathcal A}(A)$,
is the dense open subset of ${\rm{Maxspec}}\; Z(A)$ which
parametrizes the irreducible $A$-modules of maximal $\Bbbk$-dimension.
In other words, $\fm \in {\mathcal A}(A)$ if and only if
$\fm A$ is the annihilator in $A$ of an irreducible $A$-module
$V$ with $\dim V = n$, if and only if $A/\fm A\cong M_{n}(\Bbbk)$.
\item[(2)]
If ${\mathcal A}(A)={\rm{Maxspec}}\; Z(A)$, $A$ is called
{\it Azumaya}.
\end{enumerate} 
\end{definition}

We can relate the Azumaya locus with the ``simple''-locus. 
Let ${\mathcal S}$ be the property of being simple.

\begin{lemma}
\label{xxlem5.6} Assume that $A$ is free over its affine 
center $Z$.
\begin{enumerate}
\item[(1)]
$A[\underline{t}]$ is free over $Z[\underline{t}]$.
\item[(2)]
${\mathcal A}(A)=L_{\mathcal S}(A)$ where the latter is defined 
in Definition \ref{xxdef2.3}(1).
\end{enumerate}
\end{lemma}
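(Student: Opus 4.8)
The plan is to prove the two parts of Lemma \ref{xxlem5.6} separately, using the freeness hypothesis on $A$ over $Z$ to reduce questions about $A[\underline{t}]$ and about factor rings $A/\fm A$ to concrete linear-algebra statements over $\Bbbk$.

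For part (1), first I would write $A=\bigoplus_{i\in I} Z e_i$ as a free $Z$-module on some basis $\{e_i\}$. Then I would exhibit the same set $\{e_i\}$ as a $Z[\underline{t}]$-basis of $A[\underline{t}]$. The key observation is that the polynomial extension functor $-\otimes_{\Bbbk}\Bbbk[\underline{t}]$ carries a free module to a free module of the same rank: concretely, $A[\underline{t}]=A\otimes_{\Bbbk}\Bbbk[\underline{t}]\cong \big(\bigoplus_i Z e_i\big)\otimes_{\Bbbk}\Bbbk[\underline{t}]\cong \bigoplus_i (Z\otimes_{\Bbbk}\Bbbk[\underline{t}])\, e_i=\bigoplus_i Z[\underline{t}]\, e_i$, where the isomorphisms are checked to be $Z[\underline{t}]$-linear and to respect the algebra structure (the central indeterminates $t_j$ act the same way on both sides). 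This step is essentially routine bookkeeping with the tensor product and the identification $Z[\underline{t}]=Z\otimes_{\Bbbk}\Bbbk[\underline{t}]$, so I would not dwell on it.

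For part (2), I would unwind both definitions and show each side is characterized by the same condition on maximal ideals $\fm$ of $Z$. By Definition \ref{xxdef5.5}(1), $\fm\in{\mathcal A}(A)$ precisely when $A/\fm A\cong M_n(\Bbbk)$, where $n$ is the PI-degree of $A$. By Definition \ref{xxdef2.3}(1), $\fm\in L_{\mathcal S}(A)$ precisely when $A/\fm A$ is simple. Since $\Bbbk$ is algebraically closed (the standing assumption of this section) and $A$ is a finitely generated $Z$-module, $A/\fm A$ is a finite-dimensional $\Bbbk$-algebra for each maximal ideal $\fm$, with $Z/\fm=\Bbbk$ by the Nullstellensatz. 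The plan is then to argue that a finite-dimensional $\Bbbk$-algebra arising this way is simple if and only if it is a full matrix algebra $M_n(\Bbbk)$ of the maximal possible size. One direction is immediate, since $M_n(\Bbbk)$ is simple, giving ${\mathcal A}(A)\subseteq L_{\mathcal S}(A)$. For the reverse inclusion, I would use that a simple finite-dimensional algebra over an algebraically closed field is, by Artin–Wedderburn, isomorphic to some $M_m(\Bbbk)$; it then remains to verify that simplicity forces $m$ to equal the PI-degree $n$, i.e.\ that the irreducible $A/\fm A$-module has maximal $\Bbbk$-dimension $n$ among all irreducible $A$-modules.

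The main obstacle I anticipate is this last point in part (2): showing that if $A/\fm A$ is simple then the matrix size is exactly the maximal one $n$, rather than some smaller $m<n$. The resolution should come from the theory of PI algebras and the Azumaya locus as developed in the references (notably the characterization in Definition \ref{xxdef5.5} together with the fact that the irreducible modules of $A$ of dimension $n$ are precisely those whose annihilators lie over the Azumaya locus). The cleanest route is to invoke that the PI-degree $n$ equals $\sqrt{\dim_{\Bbbk}(A/\fm A)}$ exactly on the Azumaya locus, and that away from it the simple factors have strictly smaller dimension; one must check that a factor $A/\fm A$ cannot be simple while having dimension $m^2$ with $m<n$, which uses the semicontinuity of the dimensions of irreducible modules and the primeness of $A$. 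I would phrase the argument so that simplicity of $A/\fm A$ and the equality $A/\fm A\cong M_n(\Bbbk)$ are seen to be equivalent conditions on $\fm$, thereby identifying the two loci.
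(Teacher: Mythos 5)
Your part (1) matches the paper's treatment (the paper simply declares it obvious). The genuine gap is in part (2): you never use the lemma's standing hypothesis that $A$ is \emph{free} over $Z$, and that hypothesis is precisely what resolves the step you flag as the main obstacle. Since $A$ is prime PI with PI-degree $n$, Posner's theorem gives that $A\otimes_Z \Frac(Z)$ is a central simple algebra of dimension $n^2$ over $\Frac(Z)$; hence the free module $A$ has rank exactly $n^2$, and therefore $\dim_\Bbbk(A/\fm A)=n^2$ for \emph{every} maximal ideal $\fm$ (using $Z/\fm=\Bbbk$, as $\Bbbk$ is algebraically closed in this section). Now if $A/\fm A$ is simple, Artin--Wedderburn gives $A/\fm A\cong M_m(\Bbbk)$ with $m^2=\dim_\Bbbk(A/\fm A)=n^2$, so $m=n$ and $\fm\in{\mathcal A}(A)$ by Definition \ref{xxdef5.5}(1); the converse inclusion is trivial since $M_n(\Bbbk)$ is simple. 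This one-line dimension count is the paper's entire proof of (2).

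By contrast, your proposed resolution --- invoking that ``the PI-degree $n$ equals $\sqrt{\dim_\Bbbk(A/\fm A)}$ exactly on the Azumaya locus'' and ``semicontinuity of the dimensions of irreducible modules'' --- is left as an appeal to unspecified results and is dangerously close to circular: the assertion that away from ${\mathcal A}(A)$ the simple factors have strictly smaller dimension is essentially the identification ${\mathcal A}(A)=L_{\mathcal S}(A)$ that you are trying to prove. A semicontinuity argument could in fact be completed without freeness (upper semicontinuity of $\fm\mapsto\dim_\Bbbk(A/\fm A)$ forces $\dim_\Bbbk(A/\fm A)\geq n^2$ at every closed point, while Kaplansky's theorem bounds the matrix size of any simple quotient of $A$ by $n$), but as written you neither state these facts precisely nor notice that the freeness hypothesis makes all of this unnecessary: freeness gives constancy of the fiber dimension outright.
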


\begin{proof} (1) is obvious.

(2) Since $A$ is free over $Z$ of rank $n^2$, $A/\fm A$ is
isomorphic to $M_n(\Bbbk)$ if and only if $A/\fm A$ is 
simple. The assertion follows.
\end{proof}

\begin{proposition}
\label{xxpro5.7}
Suppose that $A$ is an affine prime algebra of GK-dimension one
with center $\Bbbk[x]$.
\begin{enumerate}
\item[(1)]
If $A$ is not Azumaya, then $A$ is strongly m-$Z$-retractable, 
m-detectable, and m-cancellative.
\item[(2)]
If $A$ is Azumaya, then $A$ is strongly m-cancellative.
\end{enumerate}
\end{proposition}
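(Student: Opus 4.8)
The two cases call for completely different mechanisms, and I would keep them separate. For part (1), I would first record that, by Small--Warfield \cite{SW}, $A$ is a finitely generated module over $Z=\Bbbk[x]$, and since $A$ is prime, Lemma \ref{xxlem5.3} gives $\tau(A/Z)=0$; thus $A$ is torsion-free and hence free over the PID $\Bbbk[x]$, so Lemma \ref{xxlem5.6} applies. Taking ${\mathcal P}={\mathcal S}$ to be the property of being simple, Lemma \ref{xxlem5.6}(2) identifies $L_{\mathcal S}(A)$ with the Azumaya locus ${\mathcal A}(A)$. As $A$ is not Azumaya, $D_{\mathcal S}(A)=\MaxSpec(Z)\setminus {\mathcal A}(A)$ is a nonempty proper closed subset of the affine line, hence a finite nonempty set of points; therefore $I_{\mathcal S}(A)=I(D_{\mathcal S}(A))$ is a nonzero proper ideal of the PID $\Bbbk[x]$, so it is principal with generator $d$ a nonzero non-invertible element of $Z$. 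Since ${\mathcal S}$ is a Morita invariant (Example \ref{xxex2.2}(1)) and is stable (Lemma \ref{xxlem5.1}, using that $\Bbbk$ is algebraically closed), Corollary \ref{xxcor2.11} shows $A$ is strongly m-$Z$-retractable; Lemma \ref{xxlem3.4} then yields strong m-detectability, and since $A$ is noetherian, hence strongly Hopfian (Example \ref{xxex3.5}(1)), Lemma \ref{xxlem3.6}(2) gives strong m-cancellativity. This proves (1).

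For part (2) the discriminant of ${\mathcal S}$ is a unit, so this route collapses and I would instead use that $\Bbbk$ is algebraically closed through the triviality of the Brauer group: $\mathrm{Br}(\Bbbk[x])\cong \mathrm{Br}(\Bbbk)=0$, the first isomorphism by Auslander--Goldman and the vanishing because $\Bbbk$ is algebraically closed. Consequently any Azumaya algebra over $\Bbbk[x]$ is $\End_{\Bbbk[x]}(P)$ for a finitely generated projective $P$, which is free over the PID $\Bbbk[x]$; hence it is a matrix algebra $M_m(\Bbbk[x])$ and is Morita equivalent to $\Bbbk[x]$. In particular $A\cong M_n(\Bbbk[x])$ is Morita equivalent to $\Bbbk[x]$. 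Now suppose $M(A[\underline{t}])\cong M(B[\underline{s}])$. Being Azumaya is a Morita invariant (Example \ref{xxex2.2}(5)), so $B[\underline{s}]$ is Azumaya over its center, and the isomorphism $\omega$ of \eqref{E2.1.1} gives $Z(B)[\underline{s}]\cong \Bbbk[x][\underline{t}]$; thus $Z(B)$ is an affine domain of GK-dimension one, and the strong cancellation of $\Bbbk[x]$ (\cite[Theorem 3.3]{AEH}) forces $Z(B)\cong \Bbbk[x]$. Specializing $\underline{s}\mapsto 0$ and using that being Azumaya is stable under base change, $B=B[\underline{s}]\otimes_{Z(B)[\underline{s}]}Z(B)$ is Azumaya over $Z(B)\cong \Bbbk[x]$, so by the previous step $B$ is Morita equivalent to $\Bbbk[x]$ as well. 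Therefore $M(A)\cong M(\Bbbk[x])\cong M(B)$, which is exactly strong m-cancellativity.

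The main obstacle is part (2). The discriminant machinery that drives (1) degenerates precisely when $A$ is Azumaya, and the substantive input becomes the vanishing of $\mathrm{Br}(\Bbbk[x])$ --- this is exactly where algebraic closedness of $\Bbbk$ is indispensable. The delicate point is not a computation but the descent: after cancellation recovers $Z(B)\cong\Bbbk[x]$, one must transport the Azumaya structure of $B[\underline{s}]$ back to $B$ by specialization in order to trivialize its Brauer class, and it is this specialization (together with the implicit fact that $Z(B)$, being a retract of an affine algebra, is genuinely affine of GK-dimension one so that \cite[Theorem 3.3]{AEH} applies) that requires the most care.
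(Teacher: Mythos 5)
Your proof is correct, and while part (1) follows the paper's own argument almost verbatim --- Small--Warfield plus primeness gives $A$ free over $Z=\Bbbk[x]$, Lemma \ref{xxlem5.6}(2) identifies the Azumaya locus with the ${\mathcal S}$-locus, openness and density of the Azumaya locus make the ${\mathcal S}$-discriminant a nonzero non-invertible element of $\Bbbk[x]$, and then Corollary \ref{xxcor2.11}, Lemma \ref{xxlem3.4} and Lemma \ref{xxlem3.6}(2) finish exactly as in the paper --- your part (2) takes a genuinely different route. The paper quotes \cite[Lemma 4.9(3)]{LWZ} to write $A=M_n(\Bbbk[x])$, recovers $Z(B)\cong\Bbbk[x]$ by cancellation just as you do, and then splits into cases: if $B$ is not Azumaya, it applies part (1) \emph{to $B$} to conclude $M(A)\cong M(B)$; if $B$ is Azumaya, it quotes \cite[Lemma 4.9(3)]{LWZ} again to get $B=M_{n'}(\Bbbk[x])$. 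You instead prove that $B$ is forced to be Azumaya: $A[\underline{t}]=M_n(\Bbbk[x][\underline{t}])$ is Azumaya over its center, being Azumaya is a Morita invariant (Example \ref{xxex2.2}(5)), so $B[\underline{s}]$ is Azumaya over $Z(B)[\underline{s}]$, and base change along $s_i\mapsto 0$ makes $B$ Azumaya over $Z(B)\cong\Bbbk[x]$; then $\mathrm{Br}(\Bbbk[x])=0$ gives $B\cong\End_{\Bbbk[x]}(P)\cong M_m(\Bbbk[x])$. Your route buys two things: it eliminates the case analysis (indeed it shows the paper's non-Azumaya case is vacuous, precisely because Azumaya-ness is Morita invariant), and it sidesteps a point the paper glosses over, namely that applying part (1) to $B$ requires $B$ to inherit \emph{all} the hypotheses (affine, prime, GK-dimension one, center $\Bbbk[x]$) through the Morita equivalence --- primeness and GK-dimension are listed Morita invariants, but affineness is not and needs a separate argument. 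The cost is the external input $\mathrm{Br}(\Bbbk[x])\cong\mathrm{Br}(\Bbbk)=0$, which is standard for $\Bbbk$ algebraically closed, is essentially the content of the quoted \cite[Lemma 4.9(3)]{LWZ} anyway, and is machinery the paper itself deploys in the proof of Corollary \ref{xxcor7.2}; so your argument stays within the paper's toolkit while being more self-contained.
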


\begin{proof} (1) Since the Azumaya locus is open and dense, the 
non-Azumaya locus of $A$ is a proper nonzero ideal of $Z=\Bbbk[x]$,
which is principal.
Since $A$ is prime, $\tau(A/Z)=0$ and whence $A$ is projective and then
free over $Z$. By Lemma \ref{xxlem5.6}(2), the Azumaya locus of $A[\underline{t}]$
agrees with the ${\mathcal S}$-locus of $A[\underline{t}]$.
Hence ${\mathcal S}$ is a stable Morita invariant 
property such that the ${\mathcal S}$-discriminant is a nonzero
non-invertible element in $Z$. By Corollary \ref{xxcor2.11},
$A$ is strongly m-$Z$-retractable. The rest of the proof follows
from the proof of Proposition \ref{xxpro5.4}(1).

(2) Since $A$ is Azumaya, by \cite[Lemma 4.9(3)]{LWZ}, $A=
M_n(\Bbbk[x])$ for some integer $n\geq 1$. If 
$A[\underline{t}]$ is Morita equivalent to $B[\underline{s}]$, 
then $Z(A)[\underline{t}]\cong Z(B)[\underline{s}]$. Since $Z(A)=\Bbbk[x]$ 
is strongly cancellative, $Z(B)$ is also isomorphic to $\Bbbk[x]$.
If $B$ is not Azumaya, it follows from part (1) that 
$A$ and $B$ are Morita equivalent. If $B$ is Azumaya, then 
by \cite[Lemma 4.9(3)]{LWZ}, $B$ is a 
matrix algebra $M_{n'}(\Bbbk[x])$ for some $n'\geq 1$, which is also 
Morita equivalent to $A$. Therefore $A$ is strongly 
m-cancellative.
\end{proof}

Now we are ready to prove Theorem \ref{xxthm0.6}.

\begin{theorem}
\label{xxthm5.8}
Let $A$ be an affine prime algebra of GK-dimension one.
\begin{enumerate}
\item[(1)]
$A$ is strongly m-cancellative.
\item[(2)]
If either $Z(A)\neq \Bbbk[x]$ or
$A$ is not Azumaya, then $A$ is
strongly m-$Z$-retractable and m-detectable.
\end{enumerate}
\end{theorem}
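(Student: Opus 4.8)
The plan is to pin down the structure of the center and then let the case analysis already prepared in Propositions \ref{xxpro5.4} and \ref{xxpro5.7} do the work. First I would observe that, since $A$ is affine prime of GK-dimension one, the result of Small--Warfield \cite{SW} quoted above makes $A$ a finitely generated module over its affine center $Z$, so in particular $A$ is noetherian. Because $A$ is prime, $Z$ is a domain, and since $A$ is module-finite over $Z$ we get $\GKdim Z=\GKdim A=1$; hence $Z$ is an affine commutative domain of GK-dimension one. I would also record, via Lemma \ref{xxlem5.3}, that primeness forces $\tau(A/Z)=0$. This last point is what tells us how to route the argument when $Z=\Bbbk[x]$: the hypothesis $\tau(A/Z)\neq 0$ of Proposition \ref{xxpro5.4}(2) can never hold here, so that branch is vacuous and the $Z=\Bbbk[x]$ case must instead be handled by Proposition \ref{xxpro5.7}.

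Next I would split into cases according to whether $Z\cong\Bbbk[x]$ (noting that, since $\Bbbk$ is algebraically closed, an affine domain of GK-dimension one of the form $\Bbbk'[x]$ with $\Bbbk'\supseteq\Bbbk$ a field extension is already $\Bbbk[x]$, so the dichotomy is clean). If $Z\neq\Bbbk[x]$, then Proposition \ref{xxpro5.4}(1) applies directly, using that $A$ is left noetherian, and yields that $A$ is strongly m-$Z$-retractable, m-detectable, and strongly m-cancellative; this gives both (1) and (2) at once. If $Z=\Bbbk[x]$, I would invoke Proposition \ref{xxpro5.7}: when $A$ is not Azumaya, part (1) gives strongly m-$Z$-retractable, m-detectable, and strongly m-cancellative, again covering (1) and (2); when $A$ is Azumaya, part (2) gives that $A$ is strongly m-cancellative, which is all that (1) demands and is precisely the case omitted from (2).

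Assembling the cases, part (1) holds in every situation, and part (2) holds in exactly the complement of the Azumaya-over-$\Bbbk[x]$ case, i.e.\ whenever $Z\neq\Bbbk[x]$ or $A$ is not Azumaya, matching the statement. Since the substantive content sits in the earlier propositions, I do not anticipate a real obstacle; the only care needed is twofold. First, to land the \emph{strongly} m-cancellative conclusion of (1) I would note that $A$ is strongly Hopfian by Example \ref{xxex3.5}(1), so the strong m-$Z$-retractability of the propositions upgrades through Lemma \ref{xxlem3.4} to strong m-detectability and through Lemma \ref{xxlem3.6}(2) to strong m-cancellativity. Second, I would flag why the Azumaya case genuinely must be excluded from (2): there $A\cong M_n(\Bbbk[x])$, so a partner $B$ with $M(A[\underline t])\cong M(B[\underline s])$ need only have center $\cong\Bbbk[x]$ (by strong cancellativity of $\Bbbk[x]$) and hence be a matrix algebra over $\Bbbk[x]$ Morita equivalent to $A$, which is enough for cancellation even though one cannot track the center through an m-$Z$-retraction.
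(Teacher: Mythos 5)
Your proposal is correct and follows essentially the same route as the paper: the paper's proof is exactly the three-way case split ($Z\not\cong\Bbbk[x]$; $Z\cong\Bbbk[x]$ and not Azumaya; $Z\cong\Bbbk[x]$ and Azumaya) handled by Proposition \ref{xxpro5.4}(1), Proposition \ref{xxpro5.7}(1), and Proposition \ref{xxpro5.7}(2) respectively. Your additional remarks (Small--Warfield giving noetherianity, the vacuity of the $\tau(A/Z)\neq 0$ branch by Lemma \ref{xxlem5.3}, and the strongly Hopfian upgrade via Lemmas \ref{xxlem3.4} and \ref{xxlem3.6}(2) to get the \emph{strongly} m-cancellative conclusion) are correct details that the paper leaves implicit.
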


\begin{proof} Since we assume that $\Bbbk$ is algebraically closed in
this section, by \cite[Lemma 4.9]{LWZ}, there are three cases
to consider.

Case 1: $Z(A)\not\cong \Bbbk[x]$. 

Case 2: $Z(A)\cong \Bbbk[x]$ and $A$ is not Azumaya.

Case 3: $Z(A)\cong \Bbbk[x]$ and $A$ is Azumaya. 

Applying Proposition \ref{xxpro5.4}(1) in Case 1, 
Proposition \ref{xxpro5.7}(1) in Case 2 and Proposition 
\ref{xxpro5.7}(2) in Case 3, the assertion follows.
\end{proof}

It is clear that Theorem \ref{xxthm0.6} is an immediate 
consequence of Theorem \ref{xxthm5.8}.
As far as we know there are no examples of algebras 
with the center being an affine domain of GK-dimension one 
that are not m-cancellative. Therefore we ask

\begin{question}
\label{xxque5.9} 
Let $A$ be a left noetherian algebra such that 
$Z(A)$ is an affine domain of GK-dimension one. 
Then is $A$ m-cancellative?
\end{question}

We finish this section with some examples of non-PI algebras
that are strongly (m-)cancellative.

\begin{example}
\label{xxex5.10} 
Let $Z$ denote the center of the given algebra $A$.
Assume that $\Bbbk$ has characteristic zero.
\begin{enumerate}
\item[(1)]
Let $A$ be the homogenization of the first Weyl algebra 
which is generated by $x,y,t$ subject to the relations
$$[x,t]=[y,t]=0, [x,y]=t^2.$$ It is well-known that 
the center of $A$ is $\Bbbk[t]$. Let ${\mathcal S}$ be the
property of being simple. Since $\fm:=(t-0)$ is the 
only maximal ideal of $\Bbbk[t]$ such that  $A/\fm A$ is not
simple, the ${\mathcal S}$-discriminant $d_{\mathcal S}(A)$ 
exists and equals $t$.
By Corollary \ref{xxcor2.11}, $A$ is strongly m-$Z$-retractable.
By Lemma \ref{xxlem3.4}, $A$ is strongly m-detectable. By Lemma 
\ref{xxlem3.6}(2), $A$ is both strongly cancellative and 
strongly m-cancellative.
\item[(2)]
Let $A$ be a non-PI quadratic Sklyanin algebra of global dimension 3.
It is well-known that the center of $A$ is $\Bbbk[g]$ where $g\in A$ 
has degree 3. We claim that $A/(g-\alpha)$ is simple if and only if 
$\alpha\neq 0$. If $\alpha=0$, then $A/(g)$ is connected graded which 
is not simple. Now assume that $\alpha\neq 0$. It is well-known that 
$(A[g^{-1}])_0$ is simple. Let $T$ be the 3rd Veronese subring of $A$. 
Then $(T[g^{-1}])_0\cong (A[g^{-1}])_0$ is simple. Now 
$$T/(g-\alpha)\cong T/(\alpha^{-1}g-1)\cong (T[(\alpha^{-1}g)^{-1}])_0
= (T[g^{-1}])_0\cong (A[g^{-1}])_0$$
where the second $\cong$ is due to \cite[Lemma 2.1]{RSS}. It is clear
that $A/(g-\alpha)$ contains $T/(g-\alpha)$. Since  $T/(g-\alpha)$
is simple and hence has no finite dimensional modules, $A/(g-\alpha)$ 
does not have finite dimensional modules. Since the algebra
$A/(g-\alpha)$ is 
affine of GK-dimension two, it must be simple. So we proved the claim.
The claim implies that the ${\mathcal S}$-discriminant 
$d_{\mathcal S}(A)$ exists and equals $g\in \Bbbk[g]$. Following the 
last part of the above example, $A$ is both strongly cancellative and 
strongly m-cancellative.
\end{enumerate}
\end{example}

\begin{example}
\label{xxex5.11} 
Suppose  ${\text{char}}\; \Bbbk=0$. 
Let $A$ be the universal enveloping algebra of the simple Lie
algebra $sl_2$. By Example \ref{xxex2.4}, the center of 
$A$ is $\Bbbk[Q]$ where $Q$ is the Casimir element. In this 
example, we will consider two different properties.

Let ${\mathcal W}$ be the property of not having a factor 
ring isomorphic to $M_{n+1}(\Bbbk)$ (for a fixed integer $n$). 
Then $d_{\mathcal W}(A)=Q-(n^2+2n)$ which is a nonzero non-invertible 
element in $\Bbbk[Q]$. By Corollary \ref{xxcor2.11}, $A$ is strongly 
$Z$-retractable. By \cite[Lemma 3.2]{LWZ}, $A$ is strongly 
detectable, and by \cite[Lemma 3.6(2)]{LWZ}, $A$ is strongly 
cancellative.

Next we show that $A$ is strongly m-cancellative by using a
Morita invariant property. Let ${\mathcal H}$ be the property 
that $HH_3(R)= 0$ where $HH_i(R)$ denotes the $i$th Hochschild 
homology of an algebra $R$. By \cite[Theorem 9.5.6]{We}, the 
Hochschild homology is Morita invariant. Hence ${\mathcal H}$ 
is Morita invariant. We claim that the discriminant 
$d_{\mathcal H}(A)$ is $Q+\frac{1}{4}$. This claim is equivalent 
to the following assertions
\begin{enumerate}
\item[(a)]
$HH_3(A/(Q-\lambda))=0$ for all $\lambda\neq -\frac{1}{4}$; and
\item[(b)]
$HH_3(A/(Q+\frac{1}{4}))\neq 0$ (this is the case when 
$\lambda=-\frac{1}{4}$).
\end{enumerate}
Let $B_{\lambda}=A/(Q-\lambda)$. Then $B_{\lambda}$ agrees with 
the algebra $B_{\lambda}$ in \cite[Example 2.3]{FSS}. By 
\cite[Example 2.3]{FSS}, $B_{\lambda}$ is a generalized Weyl 
algebra with $\sigma(h)=h-1$, $a=\lambda-h(h+1)$. Hence 
$B_{\lambda}$ satisfies the hypotheses of 
\cite[Theorem 2.1]{FSS}. If $\lambda\neq -\frac{1}{4}$, 
then $a'(h)$ and $a(h)$ are coprime. By \cite[Theorem 2.1(1)]{FSS},
$HH_3(B_{\lambda})=0$, which is part (a). If $\lambda= -\frac{1}{4}$, 
then the common divisor of $a'(h)$ and $a(h)$ is $a'(h)$ which 
has degree 1. By \cite[Theorem 2.1(2)]{FSS},
$HH_3(B_{\lambda})=\Bbbk$, which is part (b). Therefore 
we proved the claim. By Corollary \ref{xxcor2.11}, $A$ is 
strongly m-$Z$-retractable.
By Lemma \ref{xxlem3.4}, $A$ is strongly m-detectable. By Lemma 
\ref{xxlem3.6}(2), $A$ is both strongly cancellative and 
strongly m-cancellative. 
\end{example}

\begin{remark}
\label{xxrem5.12}
\begin{enumerate}
\item[(1)]
The second half of Example \ref{xxex5.11} shows that using a Morita 
invariant property results a better conclusion.
\item[(2)]
Another consequence of the discussion in Example \ref{xxex5.11}
is the following. If $\sigma$ is an algebra automorphism of $A:=U(sl_2)$, 
then $\sigma(Q)=Q$. Further, for every locally nilpotent derivation
$\partial\in \LND(A)$, we have $\partial(Q)=0$. This could be a useful 
fact to use in calculating the automorphism group $\Aut(A)$. According 
to \cite[Section 3.2]{CL}, the full automorphism group of $A$ is 
still unknown. A result of Joseph \cite{Jo} says that $\Aut(A)$ 
contains a wild automorphism. The automorphism of $A/(Q-\alpha)A$
was computed in \cite{Di} when $\alpha\neq n^2+2n$ for all $n\in 
{\mathbb N}$.
\end{enumerate}
\end{remark}

\section{Proof of Theorem \ref{xxthm0.5}}
\label{xxsec6}

In this section we prove Theorem \ref{xxthm0.5}. We refer to \cite{ASS} 
for basic definitions of quivers and their path algebra. Let $C_n$ be 
the cyclic quiver with $n$ vertices and $n$ arrow connecting these 
vertices in one oriented direction. In representation theory of finite 
dimensional algebras, quiver $C_n$ is also called {\it type 
$\widetilde{A}_{n-1}$}. Let $0,1,\dots, n-1$ be the vertices of $C_n$, 
and $a_i: i\to i+1$ (in ${\mathbb Z}/(n)$) be the arrows in $C_n$. Then 
$w:= \sum_{i=0}^{n-1} a_{i}a_{i+1}\cdots a_{i+n-1}$ is a central element 
in $\Bbbk C_n$. By \cite[Lemma 4.4]{LWZ}, we have the following result
concerning the center of the path algebra $\Bbbk Q$ when $Q$ is connected:
\begin{equation}
\label{E6.0.1}\tag{E6.0.1}
Z(\Bbbk Q)=\begin{cases}
\Bbbk & {\text{if $Q$ has no arrow,}}\\
\Bbbk[x]& {\text{if $Q=C_1$ or equivalently $\Bbbk Q=\Bbbk[x]$, }}\\
\Bbbk[w]& {\text{if $Q=C_n$ for $n\geq 2$,}}\\
\Bbbk &{\text{otherwise.}}
\end{cases}
\end{equation}

Similar to \cite[Lemma 3.11]{LWZ}, we have the following, and 
its proof is omitted.

\begin{lemma}
\label{xxlem6.1}
Let $\Bbbk'$ be a field extension of $\Bbbk$.
If $A\otimes_{\Bbbk} \Bbbk'$ is {\rm{(}}strongly{\rm{)}} $m$-detectable
as an algebra over $\Bbbk'$, then $A$ is {\rm{(}}strongly{\rm{)}}
$m$-detectable as an algebra over $\Bbbk$.
\end{lemma}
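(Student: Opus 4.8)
The plan is to push a given Morita equivalence over $\Bbbk$ up to the extension $\Bbbk'$, apply the assumed strong $m$-detectability there, and then descend the resulting membership statement back to $\Bbbk$ using that $\Bbbk'$ is free over $\Bbbk$. Following the pattern of the other proofs in this section, I would treat only the strongly $m$-detectable case; the $m$-detectable case is the specialization $n=1$.

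So suppose $B$ is a $\Bbbk$-algebra with $M(A[\underline t])\cong M(B[\underline s])$, realized by an invertible $(A[\underline t],B[\underline s])$-bimodule $\Omega$ whose associated center isomorphism $\omega\colon Z(A[\underline t])\to Z(B[\underline s])$ is the map in \eqref{E2.1.1}. First I would base change along $\Bbbk\hookrightarrow\Bbbk'$: by Lemma \ref{xxlem2.1}(6) (with $K=\Bbbk$ and $T=\Bbbk'$) the algebras $(A\otimes_\Bbbk\Bbbk')[\underline t]$ and $(B\otimes_\Bbbk\Bbbk')[\underline s]$ are Morita equivalent over $\Bbbk'$, via the bimodule $\Omega\otimes_\Bbbk\Bbbk'$. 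Since $\Bbbk'$ is free over $\Bbbk$, forming centers commutes with this base change, so $Z((A\otimes_\Bbbk\Bbbk')[\underline t])=Z(A[\underline t])\otimes_\Bbbk\Bbbk'$ and likewise for $B$. Because the center isomorphism of Lemma \ref{xxlem2.1}(3) is characterized by the coincidence of left and right multiplication on the invertible bimodule, the center map attached to $\Omega\otimes_\Bbbk\Bbbk'$ is exactly $\omega':=\omega\otimes\mathrm{id}_{\Bbbk'}$, and hence $(\omega')^{-1}(s_i)=\omega^{-1}(s_i)\otimes 1$ for each $i$.

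Next I would invoke the hypothesis that $A\otimes_\Bbbk\Bbbk'$ is strongly $m$-detectable over $\Bbbk'$ and apply it to this equivalence, obtaining for each $i$ that
\[
t_i\otimes 1\ \in\ (A\otimes_\Bbbk\Bbbk')\{\omega^{-1}(s_1)\otimes 1,\dots,\omega^{-1}(s_n)\otimes 1\}.
\]
Setting $R:=A\{\omega^{-1}(s_1),\dots,\omega^{-1}(s_n)\}\subseteq A[\underline t]$, the subring on the right is precisely $R\otimes_\Bbbk\Bbbk'$ inside $A[\underline t]\otimes_\Bbbk\Bbbk'$, so $t_i\otimes 1\in R\otimes_\Bbbk\Bbbk'$. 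To descend, I would fix a $\Bbbk$-basis $\{e_\alpha\}$ of $\Bbbk'$ with $e_0=1$; then $A[\underline t]\otimes_\Bbbk\Bbbk'=\bigoplus_\alpha A[\underline t]\otimes e_\alpha$ and $R\otimes_\Bbbk\Bbbk'=\bigoplus_\alpha R\otimes e_\alpha$. As $t_i\otimes 1$ sits in the $e_0$-summand, its membership in $R\otimes_\Bbbk\Bbbk'$ forces $t_i\in R$. Thus $t_i\in A\{\omega^{-1}(s_1),\dots,\omega^{-1}(s_n)\}$ for all $i$, which is exactly strong $m$-detectability of $A$ over $\Bbbk$.

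The invertibility of $\Omega\otimes_\Bbbk\Bbbk'$ and the basis bookkeeping are routine, the former being subsumed by Lemma \ref{xxlem2.1}(6). The hard part will be the compatibility claim $\omega'=\omega\otimes\mathrm{id}_{\Bbbk'}$: one must verify that the canonical center isomorphism of Lemma \ref{xxlem2.1}(3) is natural under the base change $\Bbbk\to\Bbbk'$, so that the generators $(\omega')^{-1}(s_i)$ computed over $\Bbbk'$ really are the base changes of the $\Bbbk$-generators $\omega^{-1}(s_i)$. Without this identification the descent step would be comparing the wrong subrings, so this is the step I would write out most carefully.
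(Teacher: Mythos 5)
Your proposal is correct and is essentially the argument the paper intends: the paper omits its proof of this lemma, noting only that it is ``similar to \cite[Lemma 3.11]{LWZ}'', and that argument is exactly your base-change-then-descend scheme (extend the Morita equivalence along $\Bbbk\to\Bbbk'$ via Lemma \ref{xxlem2.1}(6), apply the hypothesis over $\Bbbk'$, then recover $t_i\in A\{\omega^{-1}(s_1),\dots,\omega^{-1}(s_n)\}$ by projecting onto the $e_0$-component of a $\Bbbk$-basis of $\Bbbk'$). You also correctly isolate and verify the one point that is genuinely new in the Morita setting, namely that the center map attached to $\Omega\otimes_\Bbbk\Bbbk'$ is $\omega\otimes\mathrm{id}_{\Bbbk'}$, which follows from the left-equals-right multiplication characterization of $\omega$ in Lemma \ref{xxlem2.1}(3).
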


\begin{lemma}
\label{xxlem6.2}
Let $Q=C_n$ for $n\geq 2$. Then 
$\Bbbk Q$ is strongly m-detectable and strongly m-cancellative.
\end{lemma}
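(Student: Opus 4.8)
The plan is to recognize $\Bbbk C_n$ as an affine prime algebra of GK-dimension one and then feed it into the machinery of Section 5, after reducing to an algebraically closed base field via Lemma \ref{xxlem6.1}. First I would pin down the module structure over the center. By \eqref{E6.0.1} the center is $\Bbbk[w]$ with $w=\sum_{i=0}^{n-1}a_i a_{i+1}\cdots a_{i+n-1}$. Every path of length $\geq n$ in $C_n$ factors as $w^k$ times a path of length $<n$, so the $n^2$ paths of length $<n$ (the idempotents $e_i$ together with the short connecting paths) form a free $\Bbbk[w]$-basis of $\Bbbk C_n$. Hence $\Bbbk C_n$ is a finitely generated free module over its noetherian center $\Bbbk[w]$; in particular it is affine, noetherian, PI, and $\GKdim \Bbbk C_n=\GKdim \Bbbk[w]=1$. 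For primeness I would use that, being free over $\Bbbk[w]$, the central element $w$ is a non-zero-divisor, so $\Bbbk C_n\to \Bbbk C_n[w^{-1}]$ is injective. Inverting $w$ makes each arrow $a_i$ invertible (with inverse $w^{-1}a_{i+1}\cdots a_{i+n-1}$), and the resulting matrix units identify $\Bbbk C_n[w^{-1}]\cong M_n(\Bbbk[w,w^{-1}])$, which is prime. Any two ideals of $\Bbbk C_n$ with zero product localize to ideals of $M_n(\Bbbk[w,w^{-1}])$ with zero product, forcing one of them to vanish after localization; since $w$ is a non-zero-divisor the original ideal already vanishes, so $\Bbbk C_n$ is prime.

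With these structural facts in hand I would run the reduction to the algebraically closed case. Base change commutes with forming path algebras, so $\Bbbk C_n\otimes_\Bbbk \overline{\Bbbk}\cong \overline{\Bbbk}C_n$, which is again affine prime of GK-dimension one with center $\overline{\Bbbk}[w]\cong \overline{\Bbbk}[x]$. Crucially it is \emph{not} Azumaya: the fibre $\overline{\Bbbk}C_n/(w)$ is finite dimensional with nilpotent arrows (all paths of length $\geq n$ vanish), hence has nonzero Jacobson radical and is not simple, so the maximal ideal $(w)$ lies outside the Azumaya locus. Therefore Theorem \ref{xxthm5.8}(2) applies to $\overline{\Bbbk}C_n$ (here $\overline{\Bbbk}$ is algebraically closed, as Section 5 requires) and shows $\overline{\Bbbk}C_n$ is strongly m-$Z$-retractable, hence strongly m-detectable by Lemma \ref{xxlem3.4}. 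Lemma \ref{xxlem6.1} then descends strong m-detectability back to $\Bbbk C_n$ over $\Bbbk$. Finally, being noetherian, $\Bbbk C_n$ is strongly Hopfian by Example \ref{xxex3.5}(1), so Lemma \ref{xxlem3.6}(2) upgrades strong m-detectability to the strongly m-cancellative property, which completes the proof.

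I expect the main obstacle to be the explicit structural analysis of $\Bbbk C_n$, namely establishing the free $\Bbbk[w]$-basis, the localization isomorphism $\Bbbk C_n[w^{-1}]\cong M_n(\Bbbk[w,w^{-1}])$, and the non-Azumaya behaviour at $w=0$. Once these quiver-theoretic facts are verified, everything afterwards is a direct appeal to Theorem \ref{xxthm5.8} and Lemmas \ref{xxlem6.1}, \ref{xxlem3.4}, and \ref{xxlem3.6} already assembled in the paper, so the remaining steps are essentially bookkeeping.
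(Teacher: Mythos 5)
Your proof is correct and follows essentially the same route as the paper: reduce to algebraically closed $\Bbbk$ via Lemma \ref{xxlem6.1}, apply Theorem \ref{xxthm5.8}(2) using that $\Bbbk C_n$ is affine prime of GK-dimension one but not Azumaya, and conclude strong m-cancellativity from strong Hopfianness via Lemma \ref{xxlem3.6}(2). The only difference is that the paper simply cites \cite[Lemma 4.5]{LWZ} for the structural facts (primeness, GK-dimension one, non-Azumaya) that you verify by hand through the free $\Bbbk[w]$-basis and the localization $\Bbbk C_n[w^{-1}]\cong M_n(\Bbbk[w,w^{-1}])$.
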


\begin{proof} By \cite[Lemma 4.5]{LWZ}, $\Bbbk C_n$ is prime of 
GK-dimension one while not being Azumaya. If $\Bbbk$ is algebraically 
closed, the assertion is a special case of Theorem \ref{xxthm5.8}(2).
If $\Bbbk$ is not algebraically closed, let $\Bbbk'$ be the closure of
$\Bbbk$. By Theorem \ref{xxthm5.8}(2), $\Bbbk' Q$ is strongly m-detectable
over $\Bbbk'$. By Lemma \ref{xxlem6.1}, $\Bbbk Q$ is strongly m-detectable
over $\Bbbk$, and then strongly m-cancellative by Lemmas \ref{xxex3.5}(2)
and \ref{xxlem3.6}(2).
\end{proof}

We need another lemma before proving the main result of this section. 
The ideas of the proof are similar to the proof of \cite[Lemma 4.6]{LWZ},
so the proof is omitted.

\begin{lemma}
\label{xxlem6.3}
Let $A$ and $B$ be two algebras.
\begin{enumerate}
\item[(1)]
If $A$ and $B$ are {\rm{(}}strongly{\rm{)}} m-cancellative, so is $A\oplus B$.
\item[(2)]
If $A$ and $B$ are {\rm{(}}strongly{\rm{)}} m-retractable, so is $A\oplus B$.
\item[(3)]
If $A$ and $B$ are {\rm{(}}strongly{\rm{)}} m-detectable, so is $A\oplus B$.
\end{enumerate}
\end{lemma}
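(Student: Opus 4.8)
The plan is to reduce the whole statement to the block structure of $A\oplus B$ and then feed each block into the hypotheses on $A$ and on $B$ separately. The three structural facts I would record first are: $(A\oplus B)[\underline{t}]=A[\underline{t}]\oplus B[\underline{t}]$; $Z((A\oplus B)[\underline{t}])=Z(A)[\underline{t}]\oplus Z(B)[\underline{t}]$; and $M(A\oplus B)\cong M(A)\times M(B)$, where a right $(A\oplus B)$-module $M$ splits canonically as $Me_A\oplus Me_B$ for the orthogonal central idempotents $e_A=(1,0)$ and $e_B=(0,1)$. I may assume $A$ and $B$ are both nonzero, the other case being trivial.

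For part (1), I would start from an equivalence $M((A\oplus B)[\underline{t}])\cong M(C[\underline{s}])$ and take the induced center isomorphism $\omega\colon Z((A\oplus B)[\underline{t}])\to Z(C[\underline{s}])$ from Lemma \ref{xxlem2.1}(3). Being a ring isomorphism, $\omega$ sends the idempotent $e_B$ to an idempotent of $Z(C[\underline{s}])=Z(C)[\underline{s}]$. The crucial point is that the idempotents of a polynomial ring $R[\underline{s}]$ over a commutative ring $R$ are precisely the idempotents of $R$, so $\omega(e_B)=e$ lies in $Z(C)$ and is \emph{constant}; hence $C=eC\oplus(1-e)C=:C_1\oplus C_2$. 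Taking $I$ to be the ideal of $Z((A\oplus B)[\underline{t}])$ generated by $e_B$, one has $(A\oplus B)[\underline{t}]/I(A\oplus B)[\underline{t}]\cong A[\underline{t}]$, while $\omega(I)C[\underline{s}]=C_1[\underline{s}]$ gives $C[\underline{s}]/\omega(I)C[\underline{s}]\cong C_2[\underline{s}]$. By Lemma \ref{xxlem2.1}(5) these quotients are Morita equivalent, so $M(A[\underline{t}])\cong M(C_2[\underline{s}])$, and symmetrically $M(B[\underline{t}])\cong M(C_1[\underline{s}])$. Now invoking the (strong) m-cancellativity of $A$ and of $B$ yields $M(A)\cong M(C_2)$ and $M(B)\cong M(C_1)$, and therefore $M(A\oplus B)\cong M(A)\times M(B)\cong M(C_2)\times M(C_1)\cong M(C_1\oplus C_2)=M(C)$.

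Parts (2) and (3) would run on exactly the same splitting $C=C_1\oplus C_2$; only the data carried along the two block equivalences differs. For (2), the restriction of $\omega$ to $Z(A)[\underline{t}]=e_A Z((A\oplus B)[\underline{t}])$ is the center isomorphism $Z(A)[\underline{t}]\to Z(C_2)[\underline{s}]$ of the first block equivalence, and likewise $Z(B)[\underline{t}]\to Z(C_1)[\underline{s}]$, so the (strong) m-$Z$-retractability of $A$ and $B$ gives $\omega(Z(A))=Z(C_2)$ and $\omega(Z(B))=Z(C_1)$; summing recovers $\omega(Z(A\oplus B))=Z(C)$. For (3), I would put $f_i=\omega^{-1}(s_i)$ and split it as $(f_i^{(1)},f_i^{(2)})$ along $Z(A)[\underline{t}]\oplus Z(B)[\underline{t}]$; the components $f_i^{(1)}$, $f_i^{(2)}$ are exactly the generators supplied by the two block equivalences, so (strong) m-detectability of $A$ and $B$ gives $A[\underline{t}]=A\{f_i^{(1)}\}$ and $B[\underline{t}]=B\{f_i^{(2)}\}$. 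Since $e_A\in A\oplus B$, the subring $(A\oplus B)\{f_1,\dots,f_n\}$ contains $e_A(A\oplus B)=A\oplus 0$ together with the elements $e_A f_i=(f_i^{(1)},0)$, hence contains $A[\underline{t}]\oplus 0$; symmetrically it contains $0\oplus B[\underline{t}]$, and so it equals $(A\oplus B)[\underline{t}]$.

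The one non-formal ingredient I expect to be the crux is the constancy of idempotents in $R[\underline{s}]$: this is what makes the adjoined variables transparent to the block decomposition and lets a single idempotent $e\in Z(C)$ describe the splitting of both $C$ and $C[\underline{s}]$. Everything else is bookkeeping---confirming that the restrictions of the global $\omega$ to the two summands are genuinely the center maps attached to the block Morita equivalences (so the hypotheses on $A$ and $B$ apply unchanged), and that the components of $f_i=\omega^{-1}(s_i)$ match the generators produced blockwise. As the authors indicate, once these identifications are in place the argument parallels that of \cite[Lemma 4.6]{LWZ}.
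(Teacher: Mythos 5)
Your proof is correct, and it follows exactly the route the paper intends: the paper omits the argument with a pointer to \cite[Lemma 4.6]{LWZ}, whose idempotent-splitting strategy (central idempotents of $C[\underline{s}]$ are constant, so $C$ splits into blocks matching $A$ and $B$) is precisely what you carry out, adapted to the Morita setting via Lemma \ref{xxlem2.1}(5). The details you supply---constancy of idempotents in polynomial rings and the compatibility of the restricted center maps with the block equivalences---are the right ones and check out.
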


Now we are ready to prove Theorem \ref{xxthm0.5}.

\begin{theorem}
\label{xxthm6.4} Let $Q$ be a finite quiver and let $A$ be the path algebra
$\Bbbk Q$. Then $A$ is strongly m-cancellative. If further $Q$ has no
connected component being $C_1$, then $A$ is strongly m-detectable.
\end{theorem}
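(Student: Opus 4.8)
The plan is to reduce to connected quivers and then split according to the center computation \eqref{E6.0.1}. Write $Q$ as the disjoint union of its connected components $Q_1,\dots,Q_r$; since there are no arrows between distinct components, $\Bbbk Q\cong\bigoplus_{i=1}^r \Bbbk Q_i$ as algebras. By Lemma \ref{xxlem6.3}(1),(3) (applied inductively) it then suffices to show that each connected $\Bbbk Q_i$ is strongly m-cancellative, and that each $\Bbbk Q_i$ with $Q_i\neq C_1$ is strongly m-detectable. Throughout I would use that a path algebra of a finite quiver is a finitely generated, locally finite $\mathbb{N}$-graded algebra, hence strongly Hopfian by Example \ref{xxex3.5}(2). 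So assume $Q$ is connected and run through the four possibilities for $Z(\Bbbk Q)$ in \eqref{E6.0.1}.

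First I would dispose of the two cases with $Z(\Bbbk Q)=\Bbbk$, namely $Q$ a single vertex with no arrow and $Q$ connected with arrows but not a cycle $C_n$. The key point is that \emph{any} algebra $A$ with $Z(A)=\Bbbk$ is automatically strongly m-$Z$-retractable: if $M(A[\underline t])\cong M(B[\underline s])$, the map \eqref{E2.1.1} is a $\Bbbk$-algebra isomorphism $\omega\colon \Bbbk[\underline t]\to Z(B)[\underline s]$, and I claim it forces $Z(B)=\Bbbk$. Indeed $Z(B)[\underline s]\cong\Bbbk[\underline t]$ is an affine domain, so $Z(B)$ is a domain; comparing Krull dimensions gives $\dim Z(B)=0$, hence $Z(B)$ is a field; comparing transcendence degrees gives $\operatorname{trdeg}_\Bbbk Z(B)=0$, so $Z(B)$ is algebraic over $\Bbbk$; and since $\Bbbk$ is relatively algebraically closed in $\Bbbk(\underline t)$ while $Z(B)$ is the relative algebraic closure of $\Bbbk$ in $Z(B)(\underline s)$, the isomorphism forces $Z(B)=\omega(\Bbbk)=\Bbbk$. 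Thus $\omega(Z(A))=\Bbbk=Z(B)$, which is Definition \ref{xxdef2.6}(4). By Lemma \ref{xxlem3.4} such a $\Bbbk Q$ is strongly m-detectable, and by Lemma \ref{xxlem3.6}(2) (using strong Hopficity) strongly m-cancellative. This settles both center-$\Bbbk$ cases, including $\Bbbk Q=\Bbbk$, and needs no hypothesis on $\Bbbk$.

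For the cyclic cases: when $Q=C_n$ with $n\geq 2$, Lemma \ref{xxlem6.2} already delivers both strong m-detectability and strong m-cancellability over an arbitrary field (via the base-change Lemma \ref{xxlem6.1}). The remaining and most delicate case is $Q=C_1$, where $\Bbbk C_1=\Bbbk[x]$ is Azumaya over its own center $\Bbbk[x]$, so it is exactly the situation excluded from Theorem \ref{xxthm5.8}(2): $\Bbbk[x]$ is cancellative but not detectable, hence not m-detectable, which is why $C_1$ must be forbidden in the second assertion. What I must still prove is strong m-cancellability. When $\Bbbk$ is algebraically closed this is immediate from Theorem \ref{xxthm5.8}(1), since $\Bbbk[x]$ is affine prime of GK-dimension one. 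Over a general field I would argue directly: from $M(\Bbbk[x][\underline t])\cong M(B[\underline s])$, equation \eqref{E2.1.1} gives $Z(B)[\underline s]\cong\Bbbk[x][\underline t]$, so $Z(B)\cong\Bbbk[x]$ by the strong cancellation of $\Bbbk[x]$ \cite{AEH}; moreover $B[\underline s]$ is Morita equivalent to the commutative polynomial ring $\Bbbk[x,\underline t]$, whose Picard group is trivial and all of whose finitely generated projectives are free (Quillen--Suslin) over the connected scheme $\Spec\Bbbk[x,\underline t]$, forcing $B[\underline s]\cong M_m(\Bbbk[x,\underline t])=M_m(\Bbbk[x])[\underline t]$. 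Cancelling the affine prime GK-dimension-one algebra $M_m(\Bbbk[x])$ then yields $B\cong M_m(\Bbbk[x])$, which is Morita equivalent to $\Bbbk[x]$, so $M(\Bbbk[x])\cong M(B)$.

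Finally I would assemble the pieces. Every connected component is strongly m-cancellative (the center-$\Bbbk$ cases, $C_n$ for $n\geq2$, and $C_1$), so by Lemma \ref{xxlem6.3}(1) the whole $\Bbbk Q$ is strongly m-cancellative. Every connected component other than $C_1$ is moreover strongly m-detectable, so if $Q$ has no $C_1$-component then $\Bbbk Q$ is strongly m-detectable by Lemma \ref{xxlem6.3}(3). The main obstacle is precisely the $C_1$ case: because $\Bbbk[x]$ fails detectability, the uniform route ``strongly m-$Z$-retractable $\Rightarrow$ m-detectable $\Rightarrow$ m-cancellative'' (Lemmas \ref{xxlem3.4} and \ref{xxlem3.6}) is unavailable, and one is forced to establish strong m-cancellability of $\Bbbk[x]$ by hand through the Morita structure over the base field, as sketched above.
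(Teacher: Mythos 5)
Your proof is correct in outline and in most details, and it follows the paper's skeleton: reduce to connected $Q$ via Lemma \ref{xxlem6.3}, split according to the center computation \eqref{E6.0.1}, and quote Lemma \ref{xxlem6.2} for $C_n$, $n\geq 2$. You diverge in the other two cases, and the comparison is instructive. For the case $Z(\Bbbk Q)=\Bbbk$, the paper simply invokes Theorem \ref{xxthm4.2}(1)--(2) (which rests on the strong detectability of the center, imported from \cite{LWZ}), whereas you prove strong m-$Z$-retractability directly: $Z(B)[\underline{s}]\cong \Bbbk[\underline{t}]$ forces $Z(B)$ to be an affine domain of Krull dimension zero, hence a field algebraic over $\Bbbk$, hence equal to $\Bbbk$ since $\Bbbk$ is relatively algebraically closed in $\Bbbk(\underline{t})$; then Lemmas \ref{xxlem3.4} and \ref{xxlem3.6}(2) finish. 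This is elementary, correct, and field-independent, at the cost of redoing what Theorem \ref{xxthm4.2} packages. For $Q=C_1$, the paper cites Proposition \ref{xxpro5.7}(2), which sits in Section \ref{xxsec5} under the standing hypothesis that $\Bbbk$ is algebraically closed; your Quillen--Suslin argument (any algebra Morita equivalent to $\Bbbk[x,\underline{t}]$ is the endomorphism ring of a finitely generated projective generator, hence isomorphic to $M_m(\Bbbk[x][\underline{t}])$) works over every field, so on this point your route is actually more robust than the paper's citation.

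The one soft spot is your final step in the $C_1$ case: ``cancelling the affine prime GK-dimension-one algebra $M_m(\Bbbk[x])$'' appeals to a cancellation theorem which, over a non-algebraically-closed field, the paper does not supply (Theorem \ref{xxthm5.8}, like all of Section \ref{xxsec5}, assumes $\Bbbk$ algebraically closed). Fortunately you do not need it. From an algebra isomorphism $B[\underline{s}]\cong M_m(\Bbbk[x][\underline{t}])$, the images of $s_1,\dots,s_n$ are central, hence lie in $\Bbbk[x][\underline{t}]$; if $J$ denotes the ideal they generate there, then
$$B\cong M_m(\Bbbk[x][\underline{t}])/JM_m(\Bbbk[x][\underline{t}])\cong M_m\bigl(\Bbbk[x][\underline{t}]/J\bigr),$$
and $\Bbbk[x][\underline{t}]/J\cong Z(B)[\underline{s}]/(s_1,\dots,s_n)\cong Z(B)\cong \Bbbk[x]$, so $B\cong M_m(\Bbbk[x])$ is Morita equivalent to $\Bbbk[x]$. (Alternatively, quote Corollary \ref{xxcor7.2}: $\Bbbk[x]$ is strongly cancellative by \cite{AEH} and $\Spec \Bbbk[x]$ is connected, so it is strongly m-cancellative over any field.) With that patch your argument is complete.
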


\begin{proof} By Lemma \ref{xxlem6.3}, we may assume that $Q$ is
connected.

If $Q=C_1$, then $A=\Bbbk[x]$ and the assertion follows
Proposition \ref{xxpro5.7}(2).

If $Q=C_n$, then this is Lemma \ref{xxlem6.2}(2).

If $Q\neq C_n$ for any $n\geq 1$, then by \eqref{E6.0.1},
the center of $A$ is $\Bbbk$. By Theorem \ref{xxthm4.2}(1),
$A$ is strongly m-detectable. Since $A$ is ${\mathbb N}$-graded
and locally finite, it is strongly Hopfian by Example \ref{xxex3.5}(2).
By Theorem \ref{xxthm4.2}(2), $A$ is strongly m-cancellative. This
completes the proof.
\end{proof}

Theorem \ref{xxthm0.5} is clearly a consequence of the above
theorem.

\section{Comments, questions and examples}
\label{xxsec7}

One of the remaining questions in this project is to understand whether the 
cancellation property is equivalent to the m-cancellation property 
(as well as the d-cancellation property). We will make some comments 
about it in this section.

First of all we will show that three cancellation properties are equivalent
for commutative algebras. The next result was proved in \cite{YZ1} using 
slightly different wording.

\begin{proposition} \cite[Proposition 5.1]{YZ1}
\label{xxpro7.1}
Suppose that $A$ is an Azumaya algebra over its center $Z$ and 
that $\Spec Z$ is connected. If $D(A)$ and $D(B)$ are triangulated 
equivalent for another algebra $B$, then $A$ and $B$ are Morita 
equivalent.
\end{proposition}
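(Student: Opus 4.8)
The plan is to translate the derived equivalence into a statement about the canonical central action and then invoke the fact that, for an Azumaya algebra, being a derived invariant of its center forces a Morita equivalence. First I would recall that a triangulated equivalence $D(A) \cong D(B)$ must send the tilting generator $A$ (regarded as a complex concentrated in degree zero) to a tilting complex $T$ over $B$, and that such equivalences are automatically standard when the rings are nice enough (noetherian, or more generally by Rickard's theorem); this produces a two-sided tilting complex and hence identifies the derived Picard groups. The essential structural input is that the Hochschild cohomology $HH^0$, equivalently the center, is a derived invariant, so $Z(A) \cong Z(B)$ as $\Bbbk$-algebras. I would set $Z := Z(A)$ and use this isomorphism to regard both $A$ and $B$ as $Z$-algebras.

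Next I would exploit the Azumaya hypothesis together with the connectedness of $\Spec Z$. An Azumaya algebra over $Z$ is, by definition, a finitely generated faithful projective $Z$-module with $A \otimes_Z A^{op} \xrightarrow{\sim} \End_Z(A)$, so $A$ defines a class in the Brauer group $\mathrm{Br}(Z)$. The key point is that over a connected spectrum, Azumaya algebras are Morita equivalent over $Z$ precisely when they represent the same Brauer class, and the Brauer class is recoverable from the derived category. Concretely, I would argue that the derived equivalence, being $Z$-linear after the center identification, induces an isomorphism of the derived Azumaya/Brauer data; since $\Spec Z$ is connected the Brauer group is well-behaved and the two classes must coincide, giving an invertible $(A,B)$-bimodule that is $Z$-central. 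That bimodule is exactly the invertible bimodule $\Omega$ of Lemma \ref{xxlem2.1}(1), so $A$ and $B$ are Morita equivalent.

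I expect the main obstacle to be upgrading the abstract triangulated equivalence to something $Z$-linear and geometric. A triangulated equivalence is a priori only additive, so one must first show it is standard (given by a two-sided tilting complex) and then that the induced isomorphism on centers is compatible with the $Z$-module structures on both sides; only after this compatibility is established can one speak of a common Brauer class. A secondary subtlety is that derived Morita theory for Azumaya algebras genuinely needs the connectedness of $\Spec Z$: without it the Brauer group could split as a product and the derived equivalence might match classes componentwise in a way that does not globalize to a single invertible bimodule. Since this statement is quoted as \cite[Proposition 5.1]{YZ1}, I would lean on the derived-Azumaya machinery developed there, and my contribution in the proof sketch would be mainly to record how the center-identification and the connectedness feed into Lemma \ref{xxlem2.1} to produce the bimodule $\Omega$ witnessing the Morita equivalence.
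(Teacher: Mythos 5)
The paper offers no proof of this proposition at all: it is quoted verbatim from \cite[Proposition 5.1]{YZ1}, so your sketch must be measured against the argument in that source. Your opening move is correct and coincides with how that proof begins: Rickard's theorem produces a two-sided tilting complex $T$ in $D(B\otimes A^{op})$, the center is a derived invariant, and the resulting isomorphism $\omega\colon Z(B)\to Z(A)=:Z$ lets one regard $T$ as a complex of $Z$-central bimodules. The gap is in your second step. The claim you lean on --- that ``the Brauer class is recoverable from the derived category'' --- is, for Azumaya algebras over a connected affine base, essentially the proposition being proved, so invoking it is circular. Worse, to compare classes in $\mathrm{Br}(Z)$ at all you must first know that $B$ is an Azumaya algebra over $Z$; this is not a hypothesis ($B$ is an arbitrary algebra), and being Azumaya is not known to pass across a derived equivalence at this stage of the argument: it is a Morita invariant (Example~\ref{xxex2.2}(5)), but Morita equivalence is exactly the conclusion you are after. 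Your proposal closes neither point.

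The proof in \cite{YZ1} instead works directly with $T$ and never needs a Brauer class for $B$. For each $\mathfrak{p}\in\Spec Z$ the localization $T_{\mathfrak{p}}$ is a two-sided tilting complex over the Azumaya algebra $A_{\mathfrak{p}}$ over the local ring $Z_{\mathfrak{p}}$; reducing modulo the maximal ideal one lands over a central simple algebra, where every tilting complex is a shift of a progenerator, and this forces $T_{\mathfrak{p}}\cong P_{\mathfrak{p}}[n_{\mathfrak{p}}]$ with $P_{\mathfrak{p}}$ an invertible bimodule. Since $T$ is perfect on each side, its finitely many cohomology modules have closed supports, which by the local statement are pairwise disjoint and cover $\Spec Z$; hence each support is clopen and $\mathfrak{p}\mapsto n_{\mathfrak{p}}$ is locally constant. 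Connectedness of $\Spec Z$ then makes $n_{\mathfrak{p}}$ a constant $n$ --- this, and not a possible splitting of $\mathrm{Br}(Z)$ as you suggest, is where connectedness enters --- so $T\cong \Omega[n]$ where $\Omega:=H^{-n}(T)$ is an invertible $(B,A)$-bimodule, and Lemma~\ref{xxlem2.1} converts this into the desired Morita equivalence. This localize-and-glue analysis of the tilting complex is the missing idea in your sketch; without it (or an independent proof that derived equivalences preserve Azumaya-ness and Brauer classes), the crucial step is asserted rather than proved.
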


Note that the Brauer group of a commutative
algebra $R$, denoted by $Br(R)$, is the set of
Morita-type-equivalence classes of Azumaya algebras over $R$,
in other words, $Br(R)$ classifies Azumaya algebras over $R$
up to an equivalence relation \cite{AG}. See \cite{Sc} for 
some discussion about the Brauer group. 
One immediate consequence is 

\begin{corollary}
\label{xxcor7.2}
Suppose $Z$ is a commutative algebra with $\Spec Z$ connected.
Then the following are equivalent.
\begin{enumerate}
\item[(1)]
$Z$ is {\rm{(}}strongly{\rm{)}} cancellative.
\item[(2)]
$Z$ is {\rm{(}}strongly{\rm{)}} m-cancellative.
\item[(3)]
$Z$ is {\rm{(}}strongly{\rm{)}} d-cancellative.
\end{enumerate}
\end{corollary}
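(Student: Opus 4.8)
The plan is to establish a cycle of implications among the three cancellation properties, using Proposition \ref{xxpro7.1} to convert derived equivalences into Morita equivalences whenever the algebra in play is commutative. Two structural facts should be recorded at the outset. First, since $\Spec Z$ is connected, $Z$ has no nontrivial idempotents, and neither does any polynomial extension $Z[\underline{t}]$, so $\Spec Z[\underline{t}]$ is connected as well. Second, every commutative ring is an Azumaya algebra over itself with trivial Brauer class, so Proposition \ref{xxpro7.1} applies both to $A=Z$ and to $A=Z[\underline{t}]$. The two ``reverse'' implications are then short. For (2)$\Rightarrow$(1), an algebra isomorphism $Z[\underline{t}]\cong B[\underline{s}]$ forces $B$ commutative and induces a Morita equivalence $M(Z[\underline{t}])\cong M(B[\underline{s}])$; m-cancellativity gives $M(Z)\cong M(B)$, and since both rings are commutative, Lemma \ref{xxlem2.1}(3) identifies their centers, i.e. $Z\cong B$. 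For (3)$\Rightarrow$(1), the isomorphism gives $D(Z[\underline{t}])\cong D(B[\underline{s}])$, so d-cancellativity yields $D(Z)\cong D(B)$; applying Proposition \ref{xxpro7.1} to the commutative (hence Azumaya) ring $Z$ over the connected $\Spec Z$ converts this into a Morita equivalence, and commutativity again gives $Z\cong B$.

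The heart of the argument is a single claim that powers both (1)$\Rightarrow$(2) and (1)$\Rightarrow$(3): if $Z$ is (strongly) cancellative and $M(Z[\underline{t}])\cong M(B[\underline{s}])$ for some algebra $B$, then $B$ is Morita equivalent to $Z$. To prove it, I would first apply Lemma \ref{xxlem2.1}(3) to obtain an isomorphism $Z[\underline{t}]\cong Z(B)[\underline{s}]$ of centers and then invoke (strong) cancellativity of $Z$ to conclude $Z\cong Z(B)$. Next, since $B[\underline{s}]$ is Morita equivalent to the commutative ring $Z[\underline{t}]$, the Morita-invariance of the Azumaya property (Example \ref{xxex2.2}(5)) together with the center identification of Lemma \ref{xxlem2.1}(3),(4) exhibits $B[\underline{s}]$ as an Azumaya algebra over its center $Z(B)[\underline{s}]$ that is Brauer-equivalent to $Z(B)[\underline{s}]$, i.e. of trivial class in $Br(Z(B)[\underline{s}])$. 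With this claim in hand, (1)$\Rightarrow$(2) is immediate, while (1)$\Rightarrow$(3) follows by first using Proposition \ref{xxpro7.1} (applied to the Azumaya algebra $Z[\underline{t}]$ over the connected $\Spec Z[\underline{t}]$) to upgrade a derived equivalence $D(Z[\underline{t}])\cong D(B[\underline{s}])$ to a Morita equivalence, and then quoting the claim.

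The step I expect to be the main obstacle is the ``de-polynomialization'' inside the claim: passing from the statement that $B[\underline{s}]$ is Brauer-trivial over $Z(B)[\underline{s}]$ back to the statement that $B$ itself is Brauer-trivial over $Z(B)$, hence Morita equivalent to $Z(B)\cong Z$. I plan to handle this in two moves. First, faithfully flat descent along the free (hence faithfully flat) extension $Z(B)\to Z(B)[\underline{s}]$ shows that $B$ is Azumaya over $Z(B)$, since $B\otimes_{Z(B)}Z(B)[\underline{s}]=B[\underline{s}]$ is Azumaya over $Z(B)[\underline{s}]$. Second, the natural map $Br(Z(B))\to Br(Z(B)[\underline{s}])$ sends $[B]$ to $[B[\underline{s}]]=0$, and this map is a split injection because the augmentation $Z(B)[\underline{s}]\to Z(B)$ with $s_i\mapsto 0$ is a ring retraction of the inclusion; hence $[B]=0$ and $B$ is Morita equivalent to $Z(B)$. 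Combining this with $Z\cong Z(B)$ completes the claim, and the four implications assemble into the stated equivalence. The ``strongly'' versions go through verbatim, since connectedness, the Azumaya property, and the split injectivity of the Brauer map all persist for polynomial rings in several variables.
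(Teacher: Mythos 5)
Your proposal is correct and follows essentially the same route as the paper: reduce the derived statements to the Morita ones via Proposition \ref{xxpro7.1}, get $Z(B)\cong Z$ from the center isomorphism and (strong) cancellativity, observe that $[B[\underline{s}]]$ is trivial in $Br(Z(B)[\underline{s}])$, and conclude $[B]=0$ in $Br(Z(B))$ from the injectivity of $Br(Z(B))\to Br(Z(B)[\underline{s}])$, hence $B$ is Morita equivalent to $Z$. The only differences are organizational (a cycle of implications rather than two equivalences, with the easy directions argued directly instead of via Lemma \ref{xxlem1.4}) and that you spell out two points the paper leaves implicit, namely that $B$ itself is Azumaya over $Z(B)$ and that the Brauer map is split injective via the augmentation $s_i\mapsto 0$.
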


\begin{proof} By Proposition \ref{xxpro7.1}, it remains to 
show that (1) and (2) are equivalent. By Lemma \ref{xxlem1.4},
part (1) follows from part (2). Now we show that part (2) is a 
consequence of part (1).

Suppose $A$ is an algebra such that $Z[\underline{t}]$ is 
Morita equivalent to $A[\underline{s}]$. By the map $\omega$
in \eqref{E2.1.1}, we obtain that $Z[\underline{t}]$ is
isomorphic to $Z(A)[\underline{s}]$. Since $Z$ is 
{\rm{(}}strongly{\rm{)}} cancellative, $Z(A)\cong Z$. Let us
identify $Z(A)$ with $Z$. Since $Z[\underline{t}]$ is 
Morita equivalent to $A[\underline{s}]$, $A[\underline{s}]$
is Morita equivalent to its center, which is $Z[\underline{s}]$. Then
the Brauer-class $[A[\underline{s}]]$ as an element in $Br(Z[\underline{s}])$
is trivial by \cite[Proposition 5.3]{AG}. Since the natural map
$Br(Z)\to Br(Z[\underline{s}])$ is injective, the Brauer-class 
$[A]$ as an element in $Br(Z)$ is trivial. By \cite[Theorem 4]{Sc}
or \cite[Proposition 4.1]{Ne}, $A$ is Morita equivalent to $Z$
as required.
\end{proof}

\begin{corollary}
\label{xxcor7.3} 
Let $Z$ be a {\rm{(}}strongly{\rm{)}} detectable commutative algebra 
such that $\Spec Z$ is connected.
If $A$ is an Azumaya algebra over $Z$ that is strongly 
Hopfian, then $A$ is both {\rm{(}}strongly{\rm{)}} 
m-cancellative and {\rm{(}}strongly{\rm{)}} d-cancellative.
\end{corollary}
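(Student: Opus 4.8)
The plan is to establish the (strongly) m-cancellative statement first, and then obtain the (strongly) d-cancellative statement from it via Proposition \ref{xxpro7.1}. The point for the derived statement is that the Azumaya property is stable under adjoining central polynomial variables: since $A$ is Azumaya over $Z$, the algebra $A[\underline{t}]$ is Azumaya over its center $Z[\underline{t}]$, and $\Spec Z[\underline{t}]$ is connected because it is the product of the connected $\Spec Z$ with an affine space. Hence any triangulated equivalence $D(A[\underline{t}])\cong D(B[\underline{s}])$ is, by Proposition \ref{xxpro7.1}, already a Morita equivalence $M(A[\underline{t}])\cong M(B[\underline{s}])$. Once the (strongly) m-cancellative conclusion is known, this forces $M(A)\cong M(B)$, and a Morita equivalence induces a triangulated equivalence $D(A)\cong D(B)$; so it suffices to prove the Morita version.

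For the Morita version, suppose $M(A[\underline{t}])\cong M(B[\underline{s}])$ for some algebra $B$. By Lemma \ref{xxlem2.1} this yields the central isomorphism $\omega\colon Z[\underline{t}]\to Z(B)[\underline{s}]$ of \eqref{E2.1.1}, and since being Azumaya is a Morita invariant (Example \ref{xxex2.2}(5)) the algebra $B[\underline{s}]$ is again Azumaya over its center. I would then aim to show that $A$ is (strongly) m-detectable and close the argument with the Hopfian hypothesis: by Lemma \ref{xxlem3.6}, a strongly Hopfian algebra that is (strongly) m-detectable is (strongly) m-cancellative.

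To prove (strong) m-detectability I would invoke Lemma \ref{xxlem3.7}(1), which deduces it from $Z$ being both (strongly) detectable and (strongly) cancellative. Detectability of $Z$ is assumed, so the remaining input is that $Z$ is (strongly) cancellative. Here the Azumaya hypothesis enters through finiteness: $A$ is module-finite over $Z$, so I would argue that the (strong) Hopfian property of $A$ descends to its center $Z$, controlling a surjective endomorphism of $Z[\underline{t}]$ by means of the Hopfian, module-finite algebra $A[\underline{t}]$. Granting that $Z$ is (strongly) Hopfian, the commutative instance of \cite[Lemma 3.6]{LWZ} (detectable together with strongly Hopfian implies cancellative) gives that $Z$ is (strongly) cancellative, as required to apply Lemma \ref{xxlem3.7}(1).

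The main obstacle is precisely this transfer of the Hopfian property from $A$ to $Z$. Although $A$ is module-finite and faithful over $Z$, a surjective $\Bbbk$-algebra endomorphism of $Z[\underline{t}]$ need not extend to $A[\underline{t}]$, so concluding that $Z[\underline{t}]$ is Hopfian requires genuine use of the finite projective $Z$-module structure of $A$ (or of Noetherianity, when it is available), and it is exactly this finiteness, rather than detectability of $Z$ alone, that bridges the gap between ``detectable'' and ``m-detectable''. With the Hopfian transfer in hand the chain $Z$ (strongly) Hopfian and detectable $\Rightarrow$ $Z$ (strongly) cancellative $\Rightarrow$ $A$ (strongly) m-detectable $\Rightarrow$ $A$ (strongly) m-cancellative, followed by the reduction through Proposition \ref{xxpro7.1}, yields the (strongly) d-cancellative conclusion as well.
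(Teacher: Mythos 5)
Your architecture is the same as the paper's: Proposition \ref{xxpro7.1} reduces the derived statement to the Morita one (and your justification --- $A[\underline{t}]$ is Azumaya over $Z[\underline{t}]$ and $\Spec Z[\underline{t}]$ is connected --- is the right one), and the Morita statement is to follow from Lemma \ref{xxlem3.7}(1) (m-detectability of $A$) combined with Lemma \ref{xxlem3.6}(2) (m-detectable plus strongly Hopfian implies m-cancellative); the paper's proof is literally this citation. The genuine gap is the step you flag but never prove: the ``descent'' of the Hopfian property from $A$ to $Z$, which you need in order to supply the cancellativity hypothesis of Lemma \ref{xxlem3.7}(1). This descent goes in the wrong direction for Azumaya algebras. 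Given a surjective endomorphism $\psi$ of $Z[\underline{t}]$ with kernel $K\neq 0$, the only map it induces out of $A[\underline{t}]$ is the base-change surjection $A[\underline{t}]\to A[\underline{t}]\otimes_{Z[\underline{t}],\psi}Z[\underline{t}]\cong A[\underline{t}]/KA[\underline{t}]$; the target is an Azumaya algebra over $Z[\underline{t}]/K\cong Z[\underline{t}]$, but nothing forces it to be isomorphic to $A[\underline{t}]$ (it can represent a different Brauer class), so the Hopfian property of $A[\underline{t}]$ cannot be applied to this surjection. What the Azumaya ideal correspondence actually gives is the opposite transfer: every two-sided ideal of $A$ equals $(J\cap Z)A$, and quotients remain Azumaya, hence central over the quotient of the center; so a surjective endomorphism $\phi$ of $A$ restricts to a surjective endomorphism of $Z$ with kernel $\ker\phi\cap Z$, and therefore $Z$ Hopfian implies $A$ Hopfian --- not conversely. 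Your chain ``$A$ strongly Hopfian $\Rightarrow$ $Z$ strongly Hopfian $\Rightarrow$ $Z$ cancellative $\Rightarrow$ $A$ m-detectable'' breaks at its first arrow, and I see no way to repair that arrow; module-finiteness or projectivity of $A$ over $Z$ does not produce the needed surjective endomorphism of $A[\underline{t}]$.

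That said, you have put your finger on a real subtlety: Lemma \ref{xxlem3.7}(1) does require $Z$ to be (strongly) cancellative, the corollary's hypotheses (read with ``strongly Hopfian'' attached to $A$) do not obviously supply this, and the paper's two-line proof does not address it either. The reading under which the cited lemmas apply verbatim is that the strong Hopfian hypothesis sits on $Z$: then $Z$ being (strongly) detectable and strongly Hopfian is (strongly) cancellative by \cite[Lemma 3.6]{LWZ}, Lemma \ref{xxlem3.7}(1) yields that $A$ is (strongly) m-detectable, and the upward transfer described above shows that $A$ is strongly Hopfian (since each $A[\underline{t}]$ is Azumaya over the Hopfian ring $Z[\underline{t}]$), so Lemma \ref{xxlem3.6}(2) closes the argument before invoking Proposition \ref{xxpro7.1}. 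If you keep the hypothesis on $A$, you must either prove the downward transfer --- which I expect is false in general --- or find an independent source for the cancellativity of $Z$.
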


\begin{proof} By Proposition \ref{xxpro7.1}, we  need
to show only the claim that $A$ is {\rm{(}}strongly{\rm{)}} m-cancellative. 
Since $A$ is strongly Hopfian, the claim follows from 
Lemmas \ref{xxlem3.6}(2) and \ref{xxlem3.7}(1).
\end{proof}

The next example is similar to \cite[Example 3.3]{LWZ}.

\begin{example}
\label{xxex7.4} Let $A=\Bbbk[x,y]/(x^2=y^2=xy=0)$. By Theorem \ref{xxthm4.1}, 
$A$ is strongly m-detectable. By \cite[Example 3.3]{LWZ} and Corollary 
\ref{xxcor7.2}, the commutative algebra $A$ is neither retractable nor 
m-retractable. 
\end{example}

For non-Azumaya (noncommutative) algebras, there is no general
approach to relate the m-cancellation property with the 
d-cancellation property. However, most of cancellative algebras
verified by using the discriminant method in \cite{BZ1} 
are m-cancellative as we will see next. 

Since most of algebras that we are interested in are strongly Hopfian, 
to show an algebra is m-cancellative, it suffices to show it is 
m-detectable [Lemma \ref{xxlem3.6}(1)]. By Lemma \ref{xxlem6.1},
under some mild hypotheses, we can assume the base field $\Bbbk$ is
algebraically closed. For simplicity, we assume that {\bf $\Bbbk$ is
algebraically closed of characteristic zero for the rest of this section}.

Let $I$ be an ideal of a commutative algebra $R$. Then the 
{\it radical} of $I$ is defined to be
$$\sqrt{I}=\bigcap_{{\mathfrak p}\in \Spec R, I\subseteq 
{\mathfrak p}} {\mathfrak p}.$$
The {\it standard trace} $tr_{st}$ defined in 
\cite[Sect. 2.1(2)]{BY} agrees with the {\it regular
trace} $tr_{reg}$  defined in \cite[p.758]{CPWZ2}.
So we take $tr=\tr_{st}=tr_{reg}$ in this paper.

\begin{proposition}
\label{xxpro7.5}
Let $A$ be a prime algebra that is finitely generated as a module over 
its center $Z$ and let $v$ be the rank of $A$ over $Z$.  
Let $D\subseteq Z$ be either the $v$-discriminant ideal $D_v(B: tr)$
in the sense of \cite[Definition 1.1(2)]{CPWZ2} or the modified 
$v$-discriminant ideal $MD_v(B: tr)$ in the sense of 
\cite[Definition 1.2(2)]{CPWZ2}. Suppose that
\begin{enumerate}
\item[(a)]
The center $Z$ is an affine domain and the standard trace $tr$ maps $A$ to $Z$.
\item[(b)]
$\sqrt{D}$ is a principal ideal of $Z$ generated by an element $f$.
\item[(c)]
$f$ is an effective {\rm{(}}respectively, dominating{\rm{)}} 
element in $Z$. 
\end{enumerate}
Then the following hold.
\begin{enumerate}
\item[(1)]
$A$ is strongly m-$Z$-retractable.
\item[(2)]
$A$ is strongly $Z$-retractable.
\item[(3)]
$A$ is strongly m-detectable.
\item[(4)]
$A$ is strongly m-cancellative.
\item[(5)]
$A$ is strongly cancellative.
\end{enumerate}
\end{proposition}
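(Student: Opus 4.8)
The plan is to manufacture a single stable Morita invariant property $\mathcal{P}$ whose $\mathcal{P}$-discriminant is exactly $f$, and then feed this into the retractability/detectability machinery of Sections~2 and~3. Since we are assuming throughout this section that $\Bbbk$ is algebraically closed, Lemma~\ref{xxlem5.1} tells us that \emph{every} property of $\Bbbk$-algebras is automatically stable, so I never need to check stability by hand. For $\mathcal{P}$ I would take the property of \emph{being Azumaya}, which is a Morita invariant by Example~\ref{xxex2.2}(5) (and, on the finite-dimensional fibers $A/\fm A$ over the algebraically closed field $\Bbbk$, coincides with being a matrix algebra, Example~\ref{xxex2.2}(4)). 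Because this one property is simultaneously a Morita invariant and an honest isomorphism-invariant property, it will serve both the ``m-$Z$-retractable'' and the bare ``$Z$-retractable'' conclusions at once.

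The crux is to identify the $\mathcal{P}$-discriminant set $D_{\mathcal{P}}(A)$ with the zero locus $V(\sqrt{D})$ of the discriminant ideal. For a prime algebra $A$ that is module-finite over an affine central domain $Z$ and whose standard trace lands in $Z$ (hypothesis (a)), the theory of discriminant ideals identifies the radical of the (modified) $v$-discriminant ideal with the ideal cutting out the non-Azumaya locus of $A$; that is, $D_{\mathcal{P}}(A)$, the set of $\fm$ with $A/\fm A$ not Azumaya, equals $V(\sqrt{D})$. Combined with hypothesis (b) this yields
$$I_{\mathcal{P}}(A) = I(D_{\mathcal{P}}(A)) = \sqrt{D} = (f),$$
so the $\mathcal{P}$-discriminant $d_{\mathcal{P}}(A)$ exists and equals $f$ up to a unit of $Z$.

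With $d_{\mathcal{P}}(A) = f$ established, the remainder is an assembly of earlier results. By hypothesis (c), $f$ is effective (respectively, dominating) in the affine domain $Z$, so Theorem~\ref{xxthm2.10} shows that $Z$ is strongly $\LND^H_f$-rigid. Since $\mathcal{P}$ is a stable Morita invariant whose discriminant is $f$, Proposition~\ref{xxpro2.7}(2) gives that $A$ is strongly m-$Z$-retractable, which is~(1); reading $\mathcal{P}$ merely as a stable property gives strong $Z$-retractability,~(2). Lemma~\ref{xxlem3.4} then upgrades strong m-$Z$-retractability to strong m-detectability, which is~(3). Finally $A$ is prime, affine (being module-finite over the affine domain $Z$), and PI, hence strongly Hopfian by Example~\ref{xxex3.5}(3); Lemma~\ref{xxlem3.6}(2) therefore delivers both strong m-cancellativity,~(4), and strong cancellativity,~(5).

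The main obstacle is entirely concentrated in the second paragraph: proving that the radical of the discriminant ideal $D$ precisely cuts out the non-Azumaya locus, i.e. $D_{\mathcal{P}}(A) = V(\sqrt{D})$. This is the one place where the trace hypothesis (a) and the module-finite structure of $A$ over its center are genuinely used, and the point where one must reconcile ``being Azumaya'' with degeneracy of the trace form; here I would lean on the discriminant--Azumaya-locus correspondence of \cite{BY, CPWZ2}. Everything downstream of that identification is a formal consequence of Theorem~\ref{xxthm2.10}, Proposition~\ref{xxpro2.7}, and Lemmas~\ref{xxlem3.4} and~\ref{xxlem3.6}.
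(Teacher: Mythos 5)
Your overall architecture is exactly the paper's --- manufacture a stable Morita-invariant property $\mathcal{P}$ whose $\mathcal{P}$-discriminant is $f$, then run Theorem \ref{xxthm2.10} $\to$ Proposition \ref{xxpro2.7}(2) $\to$ Lemma \ref{xxlem3.4} $\to$ Lemma \ref{xxlem3.6}(2) --- but the property you chose does not do the job, and the gap sits precisely at the step you yourself flagged as the crux. The theorem of \cite{BY} identifies $\mathcal{V}(\sqrt{D})$ with the complement of the Azumaya locus $\mathcal{A}(A)$ in the sense of Definition \ref{xxdef5.5}, i.e.\ the set of $\fm$ with $A/\fm A\cong M_n(\Bbbk)$ where $n$ is the PI-degree of $A$. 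That is \emph{not} the same set as your $D_{\mathcal{P}}(A)$ for $\mathcal{P}=$ ``being Azumaya'': in Definition \ref{xxdef2.3}(1) the fiber $A/\fm A$ is tested as an abstract $\Bbbk$-algebra, so ``Azumaya'' means Azumaya over its own center $Z(A/\fm A)$, which can be strictly larger than $\Bbbk$ and even non-reduced; in particular every \emph{commutative} fiber is Azumaya over itself. Concretely, take $A=\Bbbk[z]I+zM_2(\Bbbk[z])\subset M_2(\Bbbk[z])$. This is a prime algebra, finitely generated over its center $Z=\Bbbk[z]$, of rank $v=4$ and PI-degree $2$; the standard trace lands in $Z$, and $\sqrt{D}=(z)$ with $z$ effective in $\Bbbk[z]$ (Example \ref{xxex2.9}), so hypotheses (a)--(c) all hold with $f=z$. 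But the fiber at $\fm=(z)$ is the commutative local algebra $\Bbbk\oplus V$ with $\dim V=3$ and $V^2=0$, which is Azumaya over itself; hence $D_{\mathcal{P}}(A)=\emptyset$, $I_{\mathcal{P}}(A)=Z$, and your $\mathcal{P}$-discriminant is a unit rather than $f$, so Proposition \ref{xxpro2.7} yields nothing. Your parenthetical claim that on finite-dimensional fibers ``Azumaya'' coincides with ``full matrix algebra over $\Bbbk$'' is exactly what fails: $\Bbbk\times\Bbbk$, $\Bbbk[\epsilon]/(\epsilon^2)$ and $M_k(\Bbbk[\epsilon]/(\epsilon^2))$ are all Azumaya over their centers but none is a matrix algebra over $\Bbbk$.

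The repair is what the paper actually does: take $\mathcal{P}=\mathcal{S}$, the property of \emph{being simple} (a Morita invariant by Example \ref{xxex2.2}(1), stable by Lemma \ref{xxlem5.1}). Even then one must still prove $L_{\mathcal{S}}(A)=\mathcal{A}(A)$, i.e.\ that a simple fiber is automatically a full $M_n(\Bbbk)$ with $n$ the PI-degree; this is Lemma \ref{xxlem5.6}(2), and it rests on a rank count (since $A$ is prime and module-finite over $Z$ it is a torsion-free $Z$-module, so every fiber has $\Bbbk$-dimension at least $v=n^2$, while the PI-degree of any quotient is at most $n$), not on any abstract Azumaya-ness. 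The same caveat applies under the charitable reading of your $\mathcal{P}$ as ``being a matrix algebra over $\Bbbk$'' (Example \ref{xxex2.2}(4)): the correspondence in \cite{BY} only concerns matrix algebras of the maximal size $n$, so this dimension count is still needed and appears nowhere in your write-up. Once $\mathcal{S}$ replaces your $\mathcal{P}$, everything downstream in your argument --- Theorem \ref{xxthm2.10}, Proposition \ref{xxpro2.7}(2), Lemma \ref{xxlem3.4}, strong Hopfianness via Example \ref{xxex3.5}(3) (or (1), as $A$ is noetherian), and Lemma \ref{xxlem3.6}(2) --- is correct and matches the paper.
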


\begin{proof} Since we assume that $\Bbbk$ is algebraically closed 
of characteristic zero, we can apply \cite[Main Theorem]{BY} by taking
the standard trace. By \cite[Main Theorem]{BY}, we have
$${\mathcal V}(D)=\MaxSpec(Z)\setminus {\mathcal A}(A)$$
where ${\mathcal V}(D)$ is the zero-set of $D$. By Lemma \ref{xxlem5.6}(2),
${\mathcal A}(A)=L_{\mathcal S}(A)$ where ${\mathcal S}$ denotes
the property of being simple. Thus the ${\mathcal S}$-discriminant
set of $A$ is equal to ${\mathcal V}(D)$. As a consequence,
the ${\mathcal S}$-discriminant ideal of $A$ is equal to 
$I({\mathcal V}(D))$, which is $\sqrt{D}$. By hypothesis (b),
we obtain that the ${\mathcal S}$-discriminant ideal of $A$
is a principal ideal of $Z$ generated by an element $f$. Since
$f$ is effective (respectively, dominating), $Z$ is strongly
$\LND_{f}^H$-rigid by Theorem \ref{xxthm2.10}. 
Since ${\mathcal S}$ is a stable Morita invariant property
[Lemma \ref{xxlem5.1}], by Proposition \ref{xxpro2.7}(2), 
$A$ is both strongly m-$Z$-retractable and strongly $Z$-retractable.
Thus we proved parts (1) and (2). Note that part (3) follows from 
part (1) and Lemma \ref{xxlem3.4}. Since $A$ is noetherian,
it is strongly Hopfian [Example \ref{xxex3.5}(1)]. Parts (4)
and (5) follows from part (3) and Lemma \ref{xxlem3.6}(2).
\end{proof}

The next example is similar to \cite[Example 5.1]{LWZ}.

\begin{example}
\label{xxex7.6}
Let $R$ be an affine commutative domain and let $f$ be a product
of a set of generating elements of $R$. Let
$$A=\begin{pmatrix} R & fR \\R &R\end{pmatrix}.$$
It is easy to check that the (modified) $4$-discriminant of $A$ 
over its center $R$ is the ideal generated by $-f^2$. Clearly, 
the radical of $(-f^2)$ is the principal ideal $(f)$. By the above 
proposition, $A$ is strongly m-$Z$-retractable, m-detectable, 
m-cancellative and cancellative. 
\end{example}

Other precise examples are the following in which we omit 
some details. See also \cite[Example 4.8]{BZ1}.

\begin{example}
\label{xxex7.7} The following algebras are $m$-cancellative 
by verifying the hypotheses of Proposition \ref{xxpro7.5}.
\begin{enumerate}
\item[(1)]
Skew polynomial rings $\Bbbk_{q}[x_1,\cdots,x_{n}]$ when $n$
is an even number and $1\neq q$ is a root of unity. 
\item[(2)]
$\Bbbk\langle x,y\rangle/(x^2y-yx^2,y^2x+xy^2)$.
\item[(3)]
Quantum Weyl algebra $\Bbbk\langle x,y\rangle/(yx-qxy-1)$
where $1\neq q$ is a root of unity. 
\item[(4)]
Every finite tensor product of algebras of the form (1),(2) and (3).
\end{enumerate}
\end{example}

\subsection*{Acknowledgments}
The authors thank Daniel Rogalski for useful conversations on the 
subject and for providing the reference \cite{RSS}, thank Ellen
Kirkman for reading an earlier version of the paper and for her 
useful comments, and thank referees for their careful reading and
and valuable comments.
D.-M. Lu was partially supported by the National Natural
Science Foundation of China (Grant No. 11671351).
Q.-S. Wu was partially supported by the National Natural
Science Foundation of China
(Grant No. 11771085 and Key Project No. 11331006).
J.J. Zhang was partially supported by the US National Science Foundation
(Nos. DMS-1402863 and DMS-1700825).

\providecommand{\bysame}{\leavevmode\hbox to3em{\hrulefill}\thinspace}
\providecommand{\MR}{\relax\ifhmode\unskip\space\fi MR }
\providecommand{\MRhref}[2]{%

\href{http://www.ams.org/mathscinet-getitem?mr=#1}{#2} }
\providecommand{\href}[2]{#2}

\end{document}